\def\nset{{\mathbb{N}}}
\def\rset{\mathbb R}
\def\eqsp{\;}
\newcommand{\as}{\text{a.s.} }
\newcommand{\pscal}[2]{\langle #1, #2 \rangle}
\newcommand{\un}{\ensuremath{\mathbbm{1}}}
\newcommand{\eqdef}{\ensuremath{\stackrel{\mathrm{def}}{=}}}
\def\Xset{\mathsf{X}} % Espace d 'etat
\def\Xsigma{\mathcal{X}} % tribu sur X
\def\Tsigma{\mathcal{B}(\Theta)} % tribu sut Theta
\def\F{\mathcal{F}} % filtration
\def\rate{\mathbf{r}}
\def\PP{\mathbb{P}} % proba
\def\PE{\mathbb{E}} % esperance
\def\bPP{\overline{\mathbb{P}}} % proba sur espace double
\def\bPE{\overline{\mathbb{E}}} % esperance sur espace double
\def\L{\mathcal{L}} % espace des fonctions
\def\tv{\mathrm{TV}}
\def\s{\mathrm{s}}
\def\compact{\mathsf{K}}  % compact
\def\Cset{\mathcal{C}} % Petite set
\def\Dset{\mathcal{D}} % Generic set
\newlength{\noteWidth}
\newtheorem{theo}{Theorem}[section]
\newtheorem{lemma}[theo]{Lemma}
\newtheorem{coro}[theo]{Corollary}
\newtheorem{prop}[theo]{Proposition}
\newtheorem{algo}{Algorithm}[section]
\theoremstyle{remark}
\newtheorem{rem}{Remark}
\newtheorem{example}{Example}
\newcounter{hypoconbis}
\newcounter{saveconbis}
\newcommand\debutA{\begin{list} {\textbf{A\arabic{hypoconbis}}}{\usecounter{hypoconbis}}\setcounter{hypoconbis}{\value{saveconbis}}}
\newcommand\finA{\end{list}\setcounter{saveconbis}{\value{hypoconbis}}}
\newcounter{hypoconbisp}
\newcounter{saveconbisp}
\newcommand\debutAp{\begin{list} {\textbf{A\arabic{hypoconbisp}'}}{\usecounter{hypoconbisp}}\setcounter{hypoconbisp}{\value{saveconbisp}}}
\newcommand\finAp{\end{list}\setcounter{saveconbisp}{\value{hypoconbisp}}}
\newcounter{hypocom}
\newcounter{savecom}
\newcommand{\debutB}{\begin{list}{\textbf{B\arabic{hypocom}}}{\usecounter{hypocom}}\setcounter{hypocom}{\value{savecom}}}
\newcommand{\finB}{\end{list}\setcounter{savecom}{\value{hypocom}}}
\newcounter{hypocomp}
\newcounter{savecomp}
\newcommand{\debutBp}{\begin{list}{\textbf{B\arabic{hypocomp}'}}{\usecounter{hypocomp}}\setcounter{hypocomp}{\value{savecomp}}}
\newcommand{\finBp}{\end{list}\setcounter{savecomp}{\value{hypocomp}}}
\newcounter{hypostab}
\newcounter{savestab}
\newcommand{\debutC}{\begin{list}{\textbf{C\arabic{hypostab}}}{\usecounter{hypostab}}\setcounter{hypostab}{\value{savestab}}}
\newcommand{\finC}{\end{list}\setcounter{savestab}{\value{hypostab}}}
\newcounter{hypodist}
\newcounter{savedist}
\newcommand{\debutD}{\begin{list}{\textbf{D\arabic{hypodist}}}{\usecounter{hypodist}}\setcounter{hypodist}{\value{savedist}}}
\newcommand{\finD}{\end{list}\setcounter{savedist}{\value{hypodist}}}
\begin{document}

\title[Subgeometric Adaptive MCMC]{Limit theorems for some adaptive MCMC
  algorithms with subgeometric kernels
}

\author[Y. Atchadé]{Yves Atchadé}

\thanks{ Y. Atchadé: University of Michigan, 1085 South University, Ann Arbor,
  48109, MI, United States. {\em E-mail address} yvesa@umich.edu}

\author[G. Fort]{ Gersende Fort} \thanks{G. Fort: LTCI, CNRS-TELECOM ParisTech,
  46 rue Barrault, 75634 Paris Cedex 13, France. {\em E-mail address}
  gfort@tsi.enst.fr}

\thanks{This work is partly supported by the french National Research Agency
  (ANR) under the program ANR-05-BLAN-0299.}

\subjclass[2000]{60J10, 65C05}

\keywords{Adaptive Markov chain Monte Carlo, Markov chain, Subgeometric
ergodicity.}

\maketitle

%\begin{center} July 2008 \end{center}

\begin{abstract}
This paper deals with the ergodicity (convergence of the marginals) and the law of large numbers for adaptive MCMC algorithms built from transition kernels that are not necessarily geometrically ergodic.
%We prove the convergence of the marginals under the assumptions that the adaptation is diminishing, each individual transition kernel is ergodic, and a certain stability condition hold. We show that this latter stability condition hold if the transition kernels satisfy a polynomial drift condition hold. A strong law of large numbers for unbounded functions also hold under the same polynomial drift condition.
We develop a number of results that broaden significantly the class of adaptive
MCMC algorithms for which rigorous analysis is now possible. As an example, we
give a detailed analysis of the Adaptive Metropolis Algorithm of
\cite{haarioetal00} when the target distribution is sub-exponential in the
tails.
\end{abstract}

\bigskip

\setcounter{secnumdepth}{3}

%\clearpage
%\newpage

\section{Introduction}
  This paper deals with the convergence of Adaptive Markov Chain Monte Carlo (AMCMC).
  Markov Chain Monte Carlo (MCMC) is a well known, widely used
method to sample from arbitrary probability distributions. One of the major
limitation of the method is the difficulty in finding sensible values for the
parameters of the Markov kernels.  Adaptive MCMC provides a general
framework to tackle this problem where the parameters are adaptively tuned,
often using previously generated samples.  This approach generates a class of
stochastic processes that is the object of this paper.

Denote $\pi$ the probability measure of interest on some measure space
$(\Xset,\mathcal{X})$. Let $\{P_\theta,\theta\in\Theta\}$ be a family of
$\phi$-irreducible and aperiodic Markov kernels each with invariant
distribution $\pi$. We are interested in the class of stochastic processes
based on non-homogeneous Markov chains $\{(X_n,\theta_n),\;n\geq 0\}$ with
transition kernels $\{\bar P\left(n; (x,\theta); (dx',d\theta')\right), n\geq 0
\}$ satisfying $\int_{\Theta} \bar P\left(n; (x,\theta); (\cdot,d\theta')
\right) = P_\theta(x,\cdot)$. Often, these transition kernels are of the form
$\{P_\theta(x,dy)\delta_{H_{n}(\theta,y)}(d\theta'), n\geq 0\}$ where
$\{H_l,\;l\geq 0\}$ is a family measurable functions, $H_l:\; \Theta\times
\Xset\to \Theta$. The stochastic approximation dynamic corresponds to the case
$H_l(\theta,x)=\theta+\gamma_l \; H(\theta,x)$. In this latter case, it is
assumed that the best values for $\theta$ are the solutions of the equation
$\int H(\theta,x)\pi(dx)=0$. Since the pioneer work of \cite{gilksetal98,
  holden98, haarioetal00, andrieuetrobert02}, the number of AMCMC algorithms in
the literature has significantly increased in recent years.  But despite many
recent works on the topic, the asymptotic behavior of these algorithms is still
not completely understood. Almost all previous works on the convergence of
AMCMC are limited to the case when each kernel $P_\theta$ is geometrically
ergodic (see e.g..  \cite{rosenthaletroberts05,andrieuetal06}). In this paper,
we weaken this condition and consider the case when each transition kernel is
sub-geometrically ergodic.

More specifically, we study the ergodicity of the marginal $\{X_n, n\geq 0 \}$
i.e. the convergence to $\pi$ of the distribution of $X_n$ irrespective of the
initial distribution, and the existence of a strong law of large numbers
for AMCMC.

We first show that a diminishing adaptation assumption of the form
$|\theta_n-\theta_{n-1}|\to 0$ in a sense to be made precise (assumption
B\ref{B1}) together with a uniform-in-$\theta$ positive recurrence towards a
small set $C$ (assumptions A\ref{A-VCset}(\ref{Anew}) and
A\ref{A-VCset}(\ref{A3rev})) and a uniform-in-$\theta$ ergodicity condition of
the kernels $\{P_\theta, \theta \in \Theta\}$ (assumption
A\ref{A-VCset}(\ref{A4rev})) are enough to imply the ergodicity of AMCMC.

We believe that this result is close to be optimal.  Indeed, it is well documented in the literature that
AMCMC can fail to be ergodic if the diminishing assumption does not hold (see
e.g.  \cite{rosenthaletroberts05} for examples). Furthermore, the additional
assumptions are also fairly weak since in the case where $\Theta$ is reduced to
the single point $\{\theta_\star\}$ so that $\{X_n, n\geq 0\}$ is a Markov
chain with transition kernel $P_{\theta_\star}$, these conditions hold if
$P_{\theta_\star}$ is an aperiodic positive that is polynomially ergodic.

We then prove a strong law of large numbers for AMCMC. We show that the
diminishing adaptation assumption and a uniform-in-$\theta$ polynomial drift
condition towards a small set $\Cset$ of the form $P_\theta V\leq V-c
V^{1-\alpha}+b\un_{\Cset}(x)$, $\alpha\in (0,1)$, implies a strong law of large
number for all real-valued measurable functions $f$ for which
$\sup_{\Xset}(|f|/V^{\beta})<\infty$, $\beta\in[0,1-\alpha)$. This result is
close to what can be achieved with Markov chains (with fixed transition kernel)
under similar conditions (\cite{meynettweedie93}).

On a more technical note, this paper makes two key contributions to the
analysis of AMCMC. Firstly, to study the ergodicity, we use a more careful
coupling technique which extends the coupling approach of
\cite{rosenthaletroberts05}. Secondly, we tackle the law of large numbers using
a resolvent kernel approach together with martingales theory. This approach has
a decisive advantage over the more classical Poisson equation approach
(\cite{andrieuetal06}) in that no continuity property of the resolvent kernels
is required. It is also worth noting that the results developed in this paper
can be applied to adaptive Markov chains beyond Markov Chain Monte Carlo
simulation provided all the transition kernels have the same invariant
distribution.

The remainder of the paper is organized as follows. In Section
\ref{sec:ResultsUnif} we state our assumptions followed by a statement of our
main results. Detailed discussion of the assumptions and some comparison with
the literature are provided in Section \ref{sec:discussionUnif}. We apply our
results to the analysis of the Adaptive Random Walk Metropolis algorithm of
\cite{haarioetal00} when the target distribution is sub-exponential in the
tails.  This is covered in Section \ref{sec:Example} together with a toy
example taken from \cite{atchadeetrosenthal03}. All the proofs are postponed to
Section~\ref{sec:Proofs}.

\section{Statement of the results and discussion}\label{sec:ResultsUnif}
\subsection{Notations}\label{sec:notations}
For a transition kernel $P$ on a measurable general state space
$(\mathbb{T},\mathcal{B}(\mathbb{T}))$, denote by $P^n$, $n\geq 0$, its $n$-th
iterate defined as
\[
P^0(x,A) \eqdef \delta_x(A) \eqsp, \qquad \qquad P^{n+1}(x,A) \eqdef \int
P(x,dy ) P^n(y,A) \eqsp, \quad n \geq 0 \eqsp;
\]
$\delta_x(dt)$ stands for the Dirac mass at $\{x\}$. $P^n$ is a transition kernel
on $(\mathbb{T},\mathcal{B}(\mathbb{T}))$ that acts both on bounded measurable
functions $f$ on $\mathbb{T}$ and on $\sigma$-finite measures $\mu$ on
$(\mathbb{T},\mathcal{B}(\mathbb{T}))$ via $P^nf(\cdot) \eqdef \int
P^n(\cdot,dy) f(y)$ and $\mu P^n(\cdot) \eqdef \int \mu(dx) P^n(x, \cdot)$.

If $V: \mathbb{T}\to [1, +\infty)$ is a function, the $V$-norm of a function
$f: \mathbb{T}\to \rset$ is defined as $|f|_V \eqdef \sup_{\mathbb{T}} |f|
/V$.  When $V=1$, this is the supremum norm. The set of functions with finite
$V$-norm is denoted by $\L_V$.

If $\mu$ is a signed measure on a measurable space
$(\mathbb{T},\mathcal{B}(\mathbb{T}))$, the total variation norm $\| \mu
\|_{\tv}$ is defined as
\[
\| \mu \|_{\tv} \eqdef \sup_{\{f, |f|_1 \leq 1 \}} | \mu(f)| = 2 \; \sup_{A \in
  \mathcal{B}(\mathbb{T})}|\mu(A)|= \sup_{A \in \mathcal{B}(\mathbb{T})} \mu(A)
- \inf_{A \in \mathcal{B}(\mathbb{T})} \mu(A) \eqsp;
\]
and the $V$-norm, for some function $V : \mathbb{T} \to [1, +\infty)$, is
defined as $\| \mu \|_{V} \eqdef \sup_{\{g, |g|_V \leq 1 \}} |\mu(g)|$.

\bigskip

Let $\Xset, \Theta$ be two general state space resp. endowed with a countably
generated $\sigma$-field $\Xsigma$ and $\Tsigma$. Let $\{P_\theta, \theta \in
\Theta \}$ be a family of Markov transition kernels on $(\Xset,\Xsigma)$ such
that for any $(x,A) \in \Xset \times \Xsigma$, $\theta \mapsto P_\theta(x,A)$
is measurable.  Let $\{\bar P(n;\cdot,\cdot), n \geq 0 \}$ be a family of transition kernels on $(\Xset \times \Theta, \Xsigma \otimes \Tsigma)$,
satisfying for any $A \in \Xsigma$,
\begin{equation}\label{eq:tk1}
\int_{A \times \Theta} \bar P\left(n; (x,\theta); (dx',d\theta')\right)  =  P_{\theta}(x, A) \eqsp.
\end{equation}
An adaptive Markov chain is a non-homogeneous Markov chain $\{ Z_n =
(X_n,\theta_n), n\geq 0 \}$ on $\Xset\times\Theta$ with transition kernels
$\{\bar P(n; \cdot; \cdot), n \geq 0\}$. 

Among examples of such transition kernels, consider the case when
$\{(X_n,\theta_n), n\geq 0\}$ is obtained through the algorithm: given
$(X_n,\theta_n)$, sample $X_{n+1} \sim P_{\theta_n}(X_n, \cdot)$ and set
$\theta_{n+1} = \theta_n$ with probability $1-p_{n+1}$ or set $\theta_{n+1}=
\tilde \Xi_{n+1}(X_n,\theta_n,X_{n+1})$ with probability $p_{n+1}$. Then
\begin{multline*}
  \bar P\left(n; (x,\theta); (dx',d\theta')\right) = P_\theta(x,dx') \ \left\{
    \left(1-p_{n+1} \right) \  \delta_\theta(d\theta') + p_{n+1} \ \delta_{\tilde
      \Xi_{n+1}(x,\theta,x')}(d\theta') \right\} \eqsp.
\end{multline*}
A special case is the case when $p_{n+1}=1$ and $\theta_{n+1} =
H_{n+1}(\theta_n,X_{n+1})$, where $\{H_l, l\geq 0 \}$ is a family of measurable
functions $H_l: \Theta \times \Xset \to \Theta$.  Then,
\[
\bar P\left(n; (x,\theta); (dx',d\theta')\right) \eqdef P_{\theta}(x, dx') \
\ \delta_{H_{n+1}(\theta,x')}(d \theta') \eqsp.
\]
Such a situation occurs for example if $\theta_{n+1}$ is updated following a
stochastic approximation dynamic: $\theta_{n+1} = \theta_n + \gamma_{n+1}
H(\theta_n,X_{n+1})$.

From $\{\bar P\left(n;\cdot,\cdot\right),\;n\geq 0\}$ and for any integer
$l\geq 0$, we introduce a family - indexed by $l$ - of sequence of transition
kernels $\{\bar P_l(n;\cdot,\cdot), n \geq 0 \}$, where $\bar
P_l\left(n;\cdot,\cdot\right) \eqdef \bar P\left(l+n;\cdot,\cdot\right)$ and we
denote by $\PP_{x,\theta}^{(l)}$ and $\PE_{x,\theta}^{(l)}$ the probability and
expectation on the canonical space $(\Omega, \F)$ of the canonical
non-homogeneous Markov chain $\{ Z_n = (X_n,\theta_n), n\geq 0 \}$ with
transition kernels $\{\bar P_l(n; \cdot; \cdot), n \geq 0\}$ and initial
distribution $\delta_{(x,\theta)}$.  We denote by $\underline{\theta}$ the
shift operator on $\Omega$ and by $\{\F_k, k \geq 0 \}$ the natural filtration
of the process $\{Z_k, k\geq 0\}$.  We use the notations $\PP_{x,\theta}$ and
$\PE_{x,\theta}$ as shorthand notations for $\PP_{x,\theta}^{(0)}$ and
$\PE_{x,\theta}^{(0)}$.

Set
\[
D(\theta,\theta') \eqdef \sup_{x \in \Xset} \| P_{\theta}(x,\cdot) -
P_{\theta'}(x,\cdot) \|_{\tv} \eqsp.
\]

\subsection{Convergence of the marginals}
We assume that minorization, drift conditions and ergodicity are available for
$P_\theta$ uniformly in $\theta$.  For a set $\Cset$, denote by $\tau_\Cset$
the return-time to $\Cset \times \Theta$ : $\tau_\Cset \eqdef \inf\{n \geq 1,
X_n \in \Cset \}$.

\debutA
\item \label{A-VCset} There exist a measurable function $V: \Xset \to
  [1,+\infty)$ and a measurable set $\Cset$ such that
  \begin{enumerate}[(i)]
  \item \label{Anew} $\sup_l \sup_{\Cset \times \Theta}
    \PE_{x,\theta}^{(l)}\left[\rate(\tau_\Cset) \right] < +\infty$ for some
    non-decreasing function $\rate : \nset \to (0, +\infty)$ such that $\sum_n
    1/\rate(n) < +\infty$.
  \item \label{A4rev} there exist a probability measure $\pi$ such that
\[
\lim_{n \to +\infty} \ \sup_{x \in \Xset} V^{-1}(x) \ \sup_{\theta \in \Theta}
\| P^n_\theta(x, \cdot) -\pi \|_{\tv}  = 0 \eqsp.
\]
\item \label{A3rev} $ \sup_\theta P_\theta V \leq V$ on $\Cset^c$ and $
  \sup_{\Cset \times \Theta} \{P_\theta V(x) + V(x) \} < +\infty$.
\end{enumerate}
\finA
\debutB
\item \label{B1} There exist probability distributions $\xi_1, \xi_2$ resp. on
  $\Xset, \Theta$ such that for any $\epsilon>0$, $ \lim_n
  \PP_{\xi_1,\xi_2}\left( D(\theta_n, \theta_{n-1}) \geq \epsilon \right)=0$.
  \finB
\begin{theo}
\label{theo:MarginalUnifCase}
Assume A\ref{A-VCset} and B\ref{B1}. Then
\[
\lim_{n \to +\infty} \sup_{\{f, |f|_1 \leq 1 \}} \left|
  \PE_{\xi_1, \xi_2}\left[f(X_n) - \pi (f)\right] \right| = 0 \eqsp.
\]
  \end{theo}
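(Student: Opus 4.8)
Throughout write $\PP=\PP_{\xi_1,\xi_2}$, $\PE=\PE_{\xi_1,\xi_2}$, and note that $\sup_{|f|_1\le1}|\PE[f(X_n)]-\pi(f)|$ is exactly the total variation distance between the law of $X_n$ and $\pi$, so this is the quantity to bound. The plan is to run the ``freeze-the-parameter'' coupling of \cite{rosenthaletroberts05}, with the so-called containment condition replaced by a tightness estimate for $\{V(X_n)\}_n$ extracted from A\ref{A-VCset}(\ref{Anew}) and A\ref{A-VCset}(\ref{A3rev}). \emph{Step 1 (tightness).} First I would prove $\lim_{M\to\infty}\sup_n\PP(V(X_n)>M)=0$ (for the argument below, $\sup_n$ may be replaced by $\limsup_n$). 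By A\ref{A-VCset}(\ref{A3rev}), $V$ is bounded on $\Cset$ and $P_\theta V\le V$ off $\Cset$, so along any excursion away from $\Cset$ the sequence $V(X_k)$ is a supermartingale; hence $V(X_n)$ is large only when the current excursion out of $\Cset$ is atypically long, and A\ref{A-VCset}(\ref{Anew}) --- through Markov's inequality $\PP^{(l)}_{x,\theta}(\tau_\Cset>m)\le\rate(m)^{-1}\PE^{(l)}_{x,\theta}[\rate(\tau_\Cset)]$ ($\rate$ being non-decreasing) and the summability $\sum_m\rate(m)^{-1}<\infty$ --- makes long excursions rare, uniformly in the starting time $l$ and in $(x,\theta)\in\Cset\times\Theta$ (this is why the $\sup_l$ is imposed). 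Decomposing a trajectory according to its last visit to $\Cset$ before time $n$ and combining these two facts gives the estimate.

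\emph{Step 2 (parameter-freezing coupling).} Fix an integer $N\ge1$ and put $m=n-N$. On a suitable enlargement of the probability space, jointly with $\{Z_k\}$ I would build a chain $\{X^\star_k\}_{k\ge m}$ with $X^\star_m=X_m$, $X^\star_{k+1}\sim P_{\theta_m}(X^\star_k,\cdot)$, coupled to $\{X_k\}$ so that, as long as the two chains still agree, the next pair is drawn from a maximal coupling of $P_{\theta_{k}}(X_k,\cdot)$ and $P_{\theta_m}(X^\star_k,\cdot)$; they then fail to agree at step $k+1$ with conditional probability at most $\tfrac12 D(\theta_{k},\theta_m)$. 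Since $D$ obeys the triangle inequality, $D(\theta_k,\theta_m)\le\sum_{i=m+1}^{k}D(\theta_i,\theta_{i-1})$, so a union bound over the $N$ steps gives
\[
\sup_{|f|_1\le1}\PE\bigl|\,\PE[f(X_n)\mid\F_m]-P^{N}_{\theta_m}f(X_m)\,\bigr|\ \le\ N\sum_{i=n-N+1}^{n-1}\PE\bigl[D(\theta_i,\theta_{i-1})\bigr].
\]
As $D\le2$ and $D(\theta_i,\theta_{i-1})\to0$ in $\PP$-probability by B\ref{B1}, bounded convergence forces the right-hand side to $0$ as $n\to\infty$, for each fixed $N$.

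\emph{Step 3 (frozen-kernel ergodicity and conclusion).} By A\ref{A-VCset}(\ref{A4rev}), $\epsilon_N\eqdef\sup_x V^{-1}(x)\sup_\theta\|P^N_\theta(x,\cdot)-\pi\|_{\tv}\to0$, whence, truncating $V$ at a level $M$ (and using $\|P^N_\theta(x,\cdot)-\pi\|_{\tv}\le 2$),
\[
\sup_{|f|_1\le1}\PE\bigl|\,P^N_{\theta_{n-N}}f(X_{n-N})-\pi(f)\,\bigr|\ \le\ \epsilon_N M+2\,\PP\bigl(V(X_{n-N})>M\bigr).
\]
Adding the two displayed bounds, taking $\limsup_n$ (the coupling term vanishes and $\limsup_n\PP(V(X_{n-N})>M)=\limsup_n\PP(V(X_n)>M)$), then $N\to\infty$, then $M\to\infty$ using Step 1, yields $\limsup_n\sup_{|f|_1\le1}|\PE[f(X_n)]-\pi(f)|=0$, which is the claim; every bound above is uniform over $\{f:|f|_1\le1\}$.

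The principal obstacle is Step 1: in the geometrically ergodic literature the containment-type property is assumed or follows at once from an exponential drift, whereas here only the weak drift $P_\theta V\le V$ is available, so $\PE[V(X_n)]$ may genuinely grow, and tightness must instead be squeezed out of the finiteness of the $\rate$-moments of $\tau_\Cset$ together with $\sum_m\rate(m)^{-1}<\infty$. Making this rigorous requires a careful accounting of the excursions of the time-inhomogeneous chain above $\Cset$ and of their dependence on the time at which they start --- this is the refinement of the \cite{rosenthaletroberts05} coupling alluded to in the introduction --- and, when $\xi_1$ does not integrate $V$, a supplementary argument bounding the time to first reach $\Cset$.
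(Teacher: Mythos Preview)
Your argument is correct. It is, however, organized differently from the paper's direct proof of Theorem~\ref{theo:MarginalUnifCase}. You first isolate a tightness statement for $\{V(X_n)\}$ (your Step~1) and then run the standard freeze-and-truncate argument (Steps~2--3); this is precisely the ``containment'' route the paper records separately as Proposition~\ref{prop:YanBai}, with your Step~1 being the content of Lemma~\ref{lem:YanBai}. The paper's own proof of the theorem bypasses a standalone tightness lemma: instead of freezing $\theta$ at the fixed time $n-N$, it performs a last-exit decomposition and freezes at the \emph{last visit $k$ to $\Cset$} with $k\le n-N$, restricted to $k\ge n-N-Q+1$ for a further parameter $Q$. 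Coupling from $x=X_k\in\Cset$ lets A\ref{A-VCset}(\ref{A4rev}) be invoked only at points where $V$ is bounded (by A\ref{A-VCset}(\ref{A3rev})), and the short excursion of the frozen chain in $\Cset^c$ between times $k$ and $n-N$ is absorbed by the supermartingale bound $P_\theta V\le V$ on $\Cset^c$; the contribution of $k\le n-N-Q$ is killed by the summability $\sum 1/\rate<\infty$. Both routes use exactly the same three inputs --- A\ref{A-VCset}(\ref{Anew}) with $\sum 1/\rate<\infty$, the supermartingale half of A\ref{A-VCset}(\ref{A3rev}), and A\ref{A-VCset}(\ref{A4rev}) together with B\ref{B1} --- but yours packages the first two into a reusable boundedness-in-probability lemma, while the paper weaves all three into a single estimate. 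Your organization is cleaner and yields tightness as a byproduct; the paper's avoids the extra lemma at the cost of the auxiliary parameter $Q$ and a more delicate coupling over $N+Q$ rather than $N$ steps.
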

  Sufficient conditions for A\ref{A-VCset} to hold are the following
  uniform-in-$\theta$ conditions \debutA
\item \label{Adrift}
  \begin{enumerate}[(i)]
  \item The transition kernels $P_\theta$ are $\phi$-irreducible, aperiodic.
  \item There exist a function $V : \Xset \to [1, +\infty)$, $\alpha \in
  (0,1)$ and constants $b,c$ such that for any $\theta \in \Theta$
\[
P_\theta V(x) \leq V(x) - c\ V^{1-\alpha}(x) + b\un_\Cset(x) \eqsp.
\]
\item For any level set $\Dset$ of $V$, there exist $\epsilon_\Dset>0$ and a
  probability $\nu_\Dset$ such that for any $\theta$, $P_\theta(x, \cdot) \geq
  \epsilon_\Dset \un_\Dset(x) \ \nu_\Dset(\cdot)$.
  \end{enumerate}
  \finA
We thus have the corollary
 \begin{coro}{(of Theorem~\ref{theo:MarginalUnifCase})}
\label{coro:MarginalUnifCase}
   Assume A\ref{Adrift}  and B\ref{B1}. Then
\[
\lim_{n \to +\infty} \sup_{\{f, |f|_1 \leq 1 \}} \left|
  \PE_{\xi_1, \xi_2}\left[f(X_n) - \pi (f)\right] \right| = 0 \eqsp.
\] \end{coro}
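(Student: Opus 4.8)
The plan is to derive the corollary from Theorem~\ref{theo:MarginalUnifCase}: it suffices to check that A\ref{Adrift} implies A\ref{A-VCset}, since the conclusion is then literally that of Theorem~\ref{theo:MarginalUnifCase}. The point to keep in mind throughout is that each estimate obtained from A\ref{Adrift} depends on $\theta$ only through the fixed constants $\alpha,b,c$ and the minorization constants, which is exactly what makes the suprema over $\theta$ (and over the time shift $l$) harmless. As a preliminary reduction, since $b\un_\Cset\le b$, A\ref{Adrift}(ii) gives $P_\theta V\le V-cV^{1-\alpha}+b$ on all of $\Xset$; choosing $d$ with $c\,d^{1-\alpha}\ge 2b$ one checks that $P_\theta V\le V-(c/2)V^{1-\alpha}+b'\un_{\{V\le d\}}$ uniformly in $\theta$, with $b'\eqdef b+(c/2)d^{1-\alpha}$. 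After renaming constants we may therefore assume from the outset that $\Cset=\{V\le d\}$ is a sublevel set of $V$ on which $P_\theta V\le V-cV^{1-\alpha}+b\un_\Cset$ holds: then $V$ and $P_\theta V$ are bounded on $\Cset$ uniformly in $\theta$, while $P_\theta V\le V-cV^{1-\alpha}\le V$ on $\Cset^c$, which is A\ref{A-VCset}(\ref{A3rev}); and, $\Cset$ being a level set of $V$, A\ref{Adrift}(iii) turns it into a small set for every $P_\theta$ with a $\theta$-free minorization constant $\epsilon_\Cset$.

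For A\ref{A-VCset}(\ref{Anew}), the key remark is that the drift inequality propagates to the adaptive chain: for every $l$ and every initial law, $\PE_{x,\theta}^{(l)}[V(X_{n+1})\mid\F_n]=P_{\theta_n}V(X_n)\le V(X_n)-cV^{1-\alpha}(X_n)+b\un_\Cset(X_n)$ whatever the random value of $\theta_n$, because A\ref{Adrift}(ii) holds for every $\theta$. Feeding this one-step inequality into the iterated-drift return-time estimates for subgeometric rates (Tuominen and Tweedie; Douc, Fort, Moulines and Soulier), which rest on the comparison theorem of \cite{meynettweedie93} and use nothing beyond one-step inequalities, produces a finite constant $\kappa$, depending only on $\alpha,b,c$ and $\sup_\Cset V$, with $\PE_{x,\theta}^{(l)}\big[\sum_{k=0}^{\tau_\Cset-1}(1+k)^{1/\alpha-1}\big]\le\kappa$ for all $l$ and all $(x,\theta)\in\Cset\times\Theta$. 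Since $\alpha\in(0,1)$ we have $1/\alpha-1>0$, hence $\sum_{k=0}^{m-1}(1+k)^{1/\alpha-1}\ge\alpha\,m^{1/\alpha}$ for all $m\ge 1$; so with $\rate(n)\eqdef\alpha\,(1\vee n)^{1/\alpha}$, which is non-decreasing and, because $1/\alpha>1$, satisfies $\sum_n 1/\rate(n)<\infty$, and since $\tau_\Cset\ge 1$, we obtain $\sup_l\sup_{\Cset\times\Theta}\PE_{x,\theta}^{(l)}[\rate(\tau_\Cset)]\le\kappa<\infty$, which is A\ref{A-VCset}(\ref{Anew}).

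For A\ref{A-VCset}(\ref{A4rev}) I would invoke quantitative subgeometric ergodicity for a fixed kernel: $\phi$-irreducibility and aperiodicity (A\ref{Adrift}(i)), the polynomial drift toward the small set $\Cset$, and the invariant distribution $\pi$ common to the family $\{P_\theta,\theta\in\Theta\}$ together give a bound of the form $\|P_\theta^n(x,\cdot)-\pi\|_{\tv}\le c_0\,\varepsilon(n)\,V(x)$, with $\varepsilon(n)\downarrow 0$ and with $c_0,\varepsilon$ depending on $\theta$ only through $\alpha,b,c,\epsilon_\Cset$ and $\sup_\Cset V$, hence uniform in $\theta$ after the reduction above. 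Dividing by $V(x)\ge 1$ and taking $\sup_x\sup_\theta$ yields $\sup_{x\in\Xset} V^{-1}(x)\,\sup_\theta\|P_\theta^n(x,\cdot)-\pi\|_{\tv}\le c_0\,\varepsilon(n)\to 0$, which is A\ref{A-VCset}(\ref{A4rev}). With all three parts of A\ref{A-VCset} verified, Theorem~\ref{theo:MarginalUnifCase} delivers the assertion.

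I expect the only genuine work to lie in the last two steps. For the ergodicity bound one must track that the constants produced by the subgeometric drift/minorization machinery are really $\theta$-free, which is automatic because every hypothesis in A\ref{Adrift} is uniform in $\theta$. For the return-time estimate one must make sure it survives the passage from a fixed kernel to the time-inhomogeneous adaptive chain; it does, because the supermartingale argument behind it uses nothing more than the one-step drift inequality, and that inequality holds at every step irrespective of how $\theta_n$ is selected.
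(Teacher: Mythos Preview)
Your proposal is correct and follows essentially the same route as the paper: verify that A\ref{Adrift} implies A\ref{A-VCset} (with A\ref{A-VCset}(\ref{A4rev}) via the explicit subgeometric bounds of Appendix~\ref{app:UniformControl}, and A\ref{A-VCset}(\ref{A3rev}) via the level-set reduction you describe), then apply Theorem~\ref{theo:MarginalUnifCase}. The only minor difference is in A\ref{A-VCset}(\ref{Anew}): you invoke the modulated-moment estimate directly to obtain $\rate(n)\sim n^{1/\alpha}$, whereas the paper uses a short bootstrapping argument via Proposition~\ref{prop:TimeFiniteAS} to obtain $\rate(n)\sim n^{1+\eta}$ for $0<\eta\le 1-\alpha$; both choices satisfy $\sum_n 1/\rate(n)<\infty$ and yield the conclusion.
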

 Assumption A\ref{A-VCset}(\ref{Anew}) and A\ref{A-VCset}(\ref{A3rev}) are
 designed to control the behavior of the chain ``far from the center''. When
 the state space $\Xset$ is ``bounded'' so that for example, $V=1$ in
 A\ref{A-VCset}(\ref{A4rev}), then we have the following result
    \begin{lemma}
 \label{lemma:MarginalUnifCaseBounded}
 If there exists a probability measure $\pi$ such that $\lim_{n \to +\infty} \
 \sup_{ \Xset \times \Theta} \| P^n_\theta(x, \cdot) -\pi(\cdot) \|_{\tv} = 0 $, then
 A\ref{A-VCset}(\ref{Anew}) and A\ref{A-VCset}(\ref{A3rev}) hold with a bounded
 function $V$ and $\Cset = \Xset$.
\end{lemma}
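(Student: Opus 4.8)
The plan is to verify conditions \ref{Anew} and \ref{A3rev} of A\ref{A-VCset} directly, making the choices $V \equiv 1$ (a bounded function valued in $[1,+\infty)$) and $\Cset = \Xset$. Observe first that the hypothesis of the lemma is nothing but condition \ref{A4rev} read with this $V$, since then $V^{-1} \equiv 1$ and $\sup_{x}V^{-1}(x)\sup_\theta\|P^n_\theta(x,\cdot)-\pi\|_{\tv}=\sup_{\Xset\times\Theta}\|P^n_\theta(x,\cdot)-\pi\|_{\tv}$; hence once \ref{Anew} and \ref{A3rev} are checked, the whole of A\ref{A-VCset} holds and Theorem~\ref{theo:MarginalUnifCase} applies.

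For \ref{Anew}: since $\Cset=\Xset$ and the adaptive chain $\{(X_n,\theta_n),\,n\geq 0\}$ lives on $\Xset\times\Theta$ by construction, one has $X_1\in\Xset=\Cset$ along every trajectory, so that $\tau_\Cset=1$ identically under $\PP_{x,\theta}^{(l)}$, for every $l\geq 0$ and every $(x,\theta)\in\Xset\times\Theta$. Consequently, for any non-decreasing $\rate:\nset\to(0,+\infty)$ one gets $\PE_{x,\theta}^{(l)}[\rate(\tau_\Cset)]=\rate(1)$, whence $\sup_l\sup_{\Cset\times\Theta}\PE_{x,\theta}^{(l)}[\rate(\tau_\Cset)]=\rate(1)<+\infty$. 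It only remains to fix one admissible rate, for instance $\rate(n)=(1+n)^2$, which is non-decreasing, positive, and satisfies $\sum_n 1/\rate(n)<+\infty$.

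For \ref{A3rev}: with $V\equiv 1$ we have $\Cset^c=\emptyset$, so the inequality ``$\sup_\theta P_\theta V\leq V$ on $\Cset^c$'' is vacuously true; and since each $P_\theta$ is a Markov kernel, $P_\theta V(x)=P_\theta(x,\Xset)=1=V(x)$ for all $(x,\theta)$, so $\sup_{\Cset\times\Theta}\{P_\theta V(x)+V(x)\}=2<+\infty$. This completes the verification. There is no genuine obstacle here: once $\Cset$ is taken to be the whole space and $V$ constant, \ref{Anew} is immediate because the return time to $\Cset$ is deterministically equal to $1$ (so \emph{any} non-decreasing rate with summable reciprocals works), and \ref{A3rev} is immediate because a constant drift function turns the drift inequality into a trivial equality. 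The only real content of the lemma is the remark that, in this ``bounded'' regime, a uniform-in-$\theta$ total variation ergodicity of $\{P_\theta,\theta\in\Theta\}$ by itself suffices to trigger all of A\ref{A-VCset}.
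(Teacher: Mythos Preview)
Your proof is correct. The paper does not spell out a proof of this lemma (it is stated as essentially immediate), and your verification with $V\equiv 1$ and $\Cset=\Xset$ is exactly the intended argument: $\tau_\Cset\equiv 1$ makes A\ref{A-VCset}(\ref{Anew}) trivial for any admissible rate $\rate$, and the drift condition A\ref{A-VCset}(\ref{A3rev}) collapses to $1+1=2<+\infty$ with the $\Cset^c$ part vacuous.
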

Combining the assumptions of Lemma~\ref{lemma:MarginalUnifCaseBounded} and
B\ref{B1}, we deduce from Theorem~\ref{theo:MarginalUnifCase} the convergence
of the marginals. This result coincides with \cite[Theorem
5]{rosenthaletroberts05}. As observed by \cite{Bai:2008} (personal
communication), assumption A\ref{Adrift} also imply the ``containment
condition'' as defined in \cite{rosenthaletroberts05}.  Consequently,
Corollary~\ref{coro:MarginalUnifCase} could also be established by applying
\cite[Theorem 13]{rosenthaletroberts05}: this would yield to the following
statement, which is adapted from \cite{Bai:2008}. Define $M_\epsilon(x,\theta)
\eqdef \inf \{n \geq 1, \|P_\theta^n(x,\cdot) - \pi(\cdot) \|_\tv \leq \epsilon
\}$.
\begin{prop}
\label{prop:YanBai}
Assume A\ref{Adrift} and  B\ref{B1}. Then for any
$\epsilon>0$, the sequence $\{M_\epsilon(X_n,\theta_n), n\geq 0 \}$ is bounded
in probability for the probability $\PP_{\xi_1,\xi_2}$ and
\[
\lim_{n \to +\infty} \sup_{\{f, |f|_1 \leq 1 \}} \left| \PE_{\xi_1,
    \xi_2}\left[f(X_n) - \pi (f)\right] \right| = 0 \eqsp.
\]
\end{prop}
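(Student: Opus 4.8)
The plan is to derive the statement from \cite[Theorem 13]{rosenthaletroberts05}, whose two hypotheses are \emph{diminishing adaptation} and \emph{containment}. The diminishing adaptation condition of \cite{rosenthaletroberts05} requires exactly that $D(\theta_n,\theta_{n-1})=\sup_x\|P_{\theta_n}(x,\cdot)-P_{\theta_{n-1}}(x,\cdot)\|_{\tv}\to 0$ in $\PP_{\xi_1,\xi_2}$-probability, i.e. assumption B\ref{B1}. It thus remains to establish containment: that for every $\epsilon>0$ the sequence $\{M_\epsilon(X_n,\theta_n),n\ge 0\}$ is bounded in probability under $\PP_{\xi_1,\xi_2}$. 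This is the core of the argument (it is the observation of \cite{Bai:2008}), and the two conclusions of the proposition then follow at once from \cite[Theorem 13]{rosenthaletroberts05}, which simultaneously yields the boundedness in probability of $\{M_\epsilon(X_n,\theta_n)\}$ and the ergodicity of the marginals.

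To prove containment I would proceed in two steps. \emph{Step 1: a uniform-in-$\theta$ ergodicity rate.} Assumption A\ref{Adrift} provides a \emph{simultaneous} polynomial drift condition together with a \emph{simultaneous} minorization on the level sets of $V$, all the constants ($\alpha$, $b$, $c$ and $\epsilon_\Dset$, $\nu_\Dset$) being independent of $\theta$. By the standard subgeometric ergodicity theory for $\phi$-irreducible aperiodic chains (see e.g. \cite{meynettweedie93} and the references therein), such a pair of conditions yields a nonincreasing rate $\psi:\nset\to(0,\infty)$ with $\psi(n)\to 0$ (in fact $\psi(n)=O(n^{-(1-\alpha)/\alpha})$) and a finite constant $c_0$ with
\[
\sup_{\theta\in\Theta}\|P_\theta^n(x,\cdot)-\pi\|_{\tv}\le c_0\,\psi(n)\,V(x),\qquad n\ge 0,\ x\in\Xset,
\]
the uniformity in $\theta$ being inherited from the uniformity of the constants. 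Hence $M_\epsilon(x,\theta)\le\phi_\epsilon(V(x))$, where $\phi_\epsilon(v)\eqdef\inf\{n\ge 1:c_0\psi(n)v\le\epsilon\}$ is finite and nondecreasing in $v$. \emph{Step 2: $\{V(X_n)\}$ is bounded in probability under $\PP_{\xi_1,\xi_2}$.} Since A\ref{Adrift} implies A\ref{A-VCset}, we may use $\sup_\theta P_\theta V\le V$ on $\Cset^c$, $\sup_{\Cset\times\Theta}\{P_\theta V+V\}<\infty$, and $\sup_l\sup_{\Cset\times\Theta}\PE_{x,\theta}^{(l)}[\rate(\tau_\Cset)]<\infty$ with $\sum_n 1/\rate(n)<\infty$. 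Splitting a trajectory into its successive excursions out of $\Cset$, the process $V(X_n)$ enters each excursion at a value $\le\sup_\Cset V$, has conditional expectation $\le\sup_{\Cset\times\Theta}P_\theta V$ one step later, and is a nonnegative supermartingale thereafter until the next visit to $\Cset$ (because $\sup_\theta P_\theta V\le V$ off $\Cset$); combining these facts with the $\rate$-modulated control of the excursion lengths gives $\sup_n\PP_{\xi_1,\xi_2}(V(X_n)>M)\to 0$ as $M\to\infty$.

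Combining the two steps, for every $\epsilon>0$ we get $\PP_{\xi_1,\xi_2}(M_\epsilon(X_n,\theta_n)>K)\le\sup_m\PP_{\xi_1,\xi_2}(V(X_m)>v_K)$ with $v_K\eqdef\sup\{v\ge 1:\phi_\epsilon(v)\le K\}\to\infty$ as $K\to\infty$, which is precisely containment; \cite[Theorem 13]{rosenthaletroberts05} then concludes. The point I expect to be the main obstacle is the uniform-in-$n$ estimate in Step 2: a naive counting of the excursions completed before time $n$ only yields a bound of order $n/M$, and one genuinely needs the summability $\sum_n 1/\rate(n)<\infty$ of the modulated return-time moment (equivalently, that the excursions out of $\Cset$ are long only with summable probability) to upgrade this into a bound uniform in $n$. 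A minor additional point is to deal with the initial segment preceding the first visit to $\Cset$, which is innocuous under a mild condition such as $\xi_1(V)<\infty$.
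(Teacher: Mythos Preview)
Your proposal is correct and follows essentially the same route as the paper: reduce to \cite[Theorem~13]{rosenthaletroberts05}, obtain the uniform-in-$\theta$ polynomial ergodicity bound $\|P_\theta^n(x,\cdot)-\pi\|_{\tv}\le C\,V^{\kappa\alpha}(x)\,(n+1)^{1-\kappa}$ from A\ref{Adrift} (the paper refers to Appendix~\ref{app:UniformControl}), and then show that $\{V^\beta(X_n)\}$ is bounded in probability. The only notable difference is in the execution of your Step~2: the paper packages it as Lemma~\ref{lem:YanBai} and uses a \emph{last-exit} decomposition at time $n$ (partitioning according to the last index $k\le n$ with $X_k\in\Cset$) rather than a forward excursion decomposition; this makes the uniformity in $n$ immediate, since the terms with $k\le n-N_\epsilon$ are controlled by $\sum_{j\ge N_\epsilon}\rate(j)^{-1}$ and the remaining $N_\epsilon$ terms by the supermartingale property plus Markov's inequality. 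The paper also handles the initial segment via $\PE_{\xi_1,\xi_2}[\tau_\Cset/n\wedge 1]\to 0$, which avoids assuming $\xi_1(V)<\infty$.
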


\subsection{Strong law of large numbers}
Assumptions A\ref{A-VCset} and B\ref{B1} are strengthened as follows \debutA
\item \label{A2} There exist a probability measure $\nu$ on $\Xset$, a positive
  constant $\varepsilon$ and a set $\Cset \in \Xsigma$ such that for any
  $\theta \in \Theta$, $P_\theta(x,\cdot) \geq \un_\Cset(x) \ \varepsilon
  \nu(\cdot)$.
\item \label{A5} There exist a measurable function $V: \Xset \to [1,+\infty)$,
  $0 < \alpha < 1$ and positive constants $b,c$ such that for any $\theta \in
  \Theta$, $P_\theta V \leq V - c \ V^{1-\alpha} + b \un_\Cset$.
\item \label{A6} There exist a probability measure $\pi$ and some $0 \leq
  \beta < 1-\alpha$ such that for any level set $\Dset \eqdef \{x \in \Xset, V(x)
  \leq d \}$ of $V$,
\[
\lim_{n \to +\infty} \ \sup_{\Dset \times \Theta} \| P^n_\theta(x, \cdot) -\pi
\|_{V^\beta} = 0 \eqsp.
\]
\finA
\debutB
\item \label{B2} For any level set $\Dset$ of $V$ and any $\epsilon>0$,
\[
\lim_n \sup_{l \geq 0} \sup_{ \Dset \times \Theta} \PP_{x,\theta}^{(l)}\left(
  D(\theta_n, \theta_{n-1}) \geq \epsilon \right)=0 \eqsp.
\]
\finB

\begin{theo}
  \label{theo:SLLNUnboundedUnifCase}
  Assume A\ref{A2}-\ref{A6} and B\ref{B2}. Then for any
  measurable function $f: \Xset \to \rset$ in $\L_{V^\beta}$ and any initial
  distribution $\xi_1,\xi_2$ resp. on $\Xset, \Theta$ such that $\xi_1(V) <
  +\infty$,
\[
\lim_{n \to +\infty} n^{-1} \sum_{k=1}^n f(X_k) = \pi(f) \eqsp, \qquad \qquad
\PP_{\xi_1,\xi_2}-\as
\]
\end{theo}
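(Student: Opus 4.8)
The plan is to use a resolvent kernel / martingale decomposition as announced in the introduction, avoiding the Poisson equation for the non-homogeneous chain. First I would record the consequences of the drift condition A\ref{A5}: the subgeometric drift $P_\theta V \le V - cV^{1-\alpha} + b\un_\Cset$ combined with the uniform minorization A\ref{A2} on $\Cset$ gives, uniformly in $\theta$, control of moments of return times to $\Cset$ and, more importantly, a rate of convergence $\|P_\theta^n(x,\cdot)-\pi\|_{V^\beta} \lesssim r_\beta(n)\,V(x)$ for a subgeometric sequence $r_\beta$ associated with the exponent $\beta/(1-\alpha)$ (this is the Douc--Fort--Moulines--Soulier type result; here A\ref{A6} supplies the missing ingredient on level sets and one interpolates). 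In particular, since $\beta < 1-\alpha$, the series $\sum_n r_\beta(n)$ is what one needs to make a resolvent-type kernel well defined. I would then fix $f \in \L_{V^\beta}$, assume without loss of generality $\pi(f)=0$, and introduce for each $\theta$ the function $\hat f_\theta \eqdef \sum_{n\ge 0}(P_\theta^n f - \pi(f)) = \sum_{n\ge 0} P_\theta^n f$, which belongs to $\L_{V^\gamma}$ for an appropriate $\gamma < 1$ and solves the Poisson equation $\hat f_\theta - P_\theta \hat f_\theta = f$ for the \emph{frozen} kernel $P_\theta$.

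Next I would write the telescoping decomposition along the adaptive chain. With $\hat f_{\theta_k}$ evaluated at the current parameter,
\[
\sum_{k=1}^n f(X_k) = \sum_{k=1}^n \left( \hat f_{\theta_k}(X_k) - P_{\theta_k}\hat f_{\theta_k}(X_k) \right)
= M_n + R_n^{(1)} + R_n^{(2)},
\]
where $M_n \eqdef \sum_{k=1}^n \big( \hat f_{\theta_k}(X_k) - \PE[\hat f_{\theta_k}(X_k)\mid \F_{k-1}]\big)$ is a martingale (using $\PE[\hat f_{\theta_k}(X_k)\mid\F_{k-1}] = P_{\theta_{k-1}}\hat f_{\theta_{k-1}}(X_{k-1})$ up to the adaptation discrepancy), $R_n^{(1)} \eqdef \sum_{k=1}^n \big( P_{\theta_{k-1}}\hat f_{\theta_{k-1}}(X_{k-1}) - P_{\theta_k}\hat f_{\theta_k}(X_k)\big)$ is a telescoping remainder equal to $P_{\theta_0}\hat f_{\theta_0}(X_0) - P_{\theta_n}\hat f_{\theta_n}(X_n)$, and $R_n^{(2)}$ collects the terms coming from the fact that the conditional expectation uses $\theta_{k-1}$ rather than $\theta_k$, i.e. it is controlled by $D(\theta_k,\theta_{k-1})$ against the $V^\gamma$-norm of $\hat f_\cdot$. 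For the martingale $M_n$ I would verify an $L^p$ bound on the increments via the $\L_{V^\gamma}$ bound on $\hat f_\theta$ together with a uniform-in-$\theta$, uniform-in-$l$ moment bound $\sup_l \PE_{\xi_1,\xi_2}[V(X_k)] < \infty$ (propagated from $\xi_1(V)<\infty$ by the drift inequality, summed and Cesàro-averaged), so that $n^{-1}M_n \to 0$ a.s. by the Chow or Burkholder strong law for martingales with $p>1$. The remainder $n^{-1}R_n^{(1)} \to 0$ a.s. follows because $V^\gamma(X_n)/n \to 0$ a.s., which one gets again from the drift condition (a supermartingale-type argument showing $V(X_n)$ cannot grow linearly). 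The term $n^{-1}R_n^{(2)} \to 0$ is where B\ref{B2} enters: each summand is bounded by $|\hat f|_{V^\gamma}\, D(\theta_k,\theta_{k-1})\, V^\gamma(X_{k-1})$ or similar, and B\ref{B2} forces $D(\theta_k,\theta_{k-1})\to 0$ in probability uniformly over starting points and shifts, which upgraded through a Cesàro / Toeplitz argument together with the $V$-moment control yields the a.s. convergence of the average to zero.

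The main obstacle I expect is twofold and both parts concern \emph{uniformity in $\theta$}. First, establishing the subgeometric rate $\|P_\theta^n(x,\cdot)-\pi\|_{V^\beta}\lesssim r_\beta(n)V(x)$ \emph{with constants independent of $\theta$}: A\ref{A6} only gives convergence on level sets, so one must run the standard subgeometric coupling/regeneration argument with the uniform minorization A\ref{A2} and uniform drift A\ref{A5}, checking at every step that the coupling time bounds, the constants $c,b,\varepsilon$, and the interpolation between the $V^\beta$-bound on small sets and the drift do not depend on $\theta$; this in turn makes $\sup_\theta |\hat f_\theta|_{V^\gamma} < \infty$, which is exactly what all three error terms above need. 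Second, converting the ``in probability, uniformly in $l$'' diminishing-adaptation statement B\ref{B2} into an almost-sure statement about $n^{-1}\sum_{k\le n} D(\theta_k,\theta_{k-1})V^\gamma(X_{k-1})$: the clean route is to note that B\ref{B2} plus the moment bounds give $\PE_{\xi_1,\xi_2}[D(\theta_k,\theta_{k-1})V^\gamma(X_{k-1})\wedge 1]\to 0$ along the chain (splitting on whether $X_{k-1}$ lies in a large level set $\Dset$ or not, the latter having small probability by Markov's inequality from $\sup_k\PE[V(X_k)]<\infty$), and then a Cesàro argument gives convergence in $L^1$ of the average, with the a.s. statement obtained after a further martingale/Borel--Cantelli truncation. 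Keeping the exponents straight — $\beta < 1-\alpha$ chosen so that $\gamma \eqdef \beta + \alpha < 1$ leaves room for a genuine $L^p$ martingale bound and for $V^\gamma(X_n)/n\to 0$ — is the bookkeeping that makes the whole scheme close.
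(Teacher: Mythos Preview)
Your decomposition is the classical Poisson-equation route (as in \cite{andrieuetal06}) with $\hat f_\theta=\sum_{n\ge 0}P_\theta^n f$ for the \emph{frozen} kernel, and it carries a genuine gap at exactly the point the paper's method is designed to bypass. The offending term is $R_n^{(2)}$: unpacking your telescoping, this sum contains increments of the form $P_{\theta_k}\hat f_{\theta_k}(X_k)-P_{\theta_{k-1}}\hat f_{\theta_{k-1}}(X_k)$, which split as $(P_{\theta_k}-P_{\theta_{k-1}})\hat f_{\theta_k}(X_k)+P_{\theta_{k-1}}(\hat f_{\theta_k}-\hat f_{\theta_{k-1}})(X_k)$. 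Neither piece is controlled by $D(\theta_k,\theta_{k-1})\,V^\gamma(X_{k-1})$ under the stated hypotheses. The quantity $D(\theta,\theta')$ is a \emph{total variation} bound on $P_\theta(x,\cdot)-P_{\theta'}(x,\cdot)$, while $\hat f_\theta\in\L_{V^\gamma}$ is unbounded, so $(P_\theta-P_{\theta'})\hat f_\theta(x)$ requires a $V^\gamma$-norm estimate $\sup_x V^{-\gamma}(x)\|P_\theta(x,\cdot)-P_{\theta'}(x,\cdot)\|_{V^\gamma}$, which is neither assumed nor derivable from A\ref{A2}--A\ref{A6}, B\ref{B2}. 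The second piece requires Lipschitz regularity of $\theta\mapsto\hat f_\theta$ in $\L_{V^\gamma}$, which again needs structural information about $\theta\mapsto P_\theta$ beyond B\ref{B2}. The paper says this explicitly in Section~\ref{subsec:MethodsProof}: controlling $|g_\theta-g_{\theta'}|$ ``can be expensive'', and the resolvent approach is introduced precisely to avoid it.

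The paper's proof takes a genuinely different route. Instead of the frozen-kernel Poisson solution, it uses the resolvent of the \emph{adaptive} process, $\hat g_a^{(l)}(x,\theta)=\sum_{j\ge0}(1-a)^{j+1}\PE_{x,\theta}^{(l)}[\bar f(X_j)]$, indexed by a damping parameter $a\in(0,1)$ and the time shift $l$. This satisfies the exact identity $\bar f(x)=(1-a)^{-1}\hat g_a^{(l)}(x,\theta)-\PE_{x,\theta}^{(l)}[\hat g_a^{(l+1)}(X_1,\theta_1)]$ (Proposition~\ref{prop:QuasiPoissonEq}), so the telescoping in the martingale decomposition is automatic in the shift index and no comparison $\hat g^{(l)}_a(\cdot,\theta)$ vs.\ $\hat g^{(l)}_a(\cdot,\theta')$ ever appears. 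The diminishing adaptation B\ref{B2} enters instead through a coupling argument (Proposition~\ref{prop:ContructionCouplingOpt}) inside the key estimate Theorem~\ref{theo:controleG}, which bounds $|\hat g_a^{(l)}|$ by comparing the adaptive chain to a frozen one over blocks delimited by returns to a level set. One then lets $a=a_n\to 0$ along a carefully chosen polynomial sequence and handles the resulting martingale \emph{array} via a Chow--Birnbaum--Marshall inequality (Lemma~\ref{lem:Birnbaum}), together with the difference bound (\ref{bounddiffGa}). Your obstacles paragraph correctly identifies the uniform-in-$\theta$ subgeometric rate as one issue, but misidentifies the second: the hard part is not upgrading B\ref{B2} from in-probability to almost-sure, it is that the Poisson-equation remainder is simply not dominated by $D(\theta_k,\theta_{k-1})$ at all.
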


As in the case of the convergence of the marginals, when A\ref{A6} and
  B\ref{B2} hold with $\Dset = \Xset$ and $\beta = 0$, A\ref{A2} and A\ref{A5}
  can be omitted.  We thus have
\begin{prop}
    \label{prop:SLLNUnboundedUnifCaseBounded}
    Assume that A\ref{A6} and B\ref{B2} hold with $\Dset = \Xset$ and
    $\beta=0$. Then for any measurable bounded function $f: \Xset \to \rset$
    and any initial distribution $\xi_1,\xi_2$ resp. on $\Xset, \Theta$
\[
\lim_{n \to +\infty} n^{-1} \sum_{k=1}^n f(X_k) = \pi(f) \eqsp, \qquad \qquad
\PP_{\xi_1,\xi_2}-\as
\]
\end{prop}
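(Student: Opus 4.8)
The plan is to run, in this boundedly ergodic setting, the resolvent/martingale argument underlying Theorem~\ref{theo:SLLNUnboundedUnifCase}; the assumptions A\ref{A2} and A\ref{A5} serve there only to produce drift- and minorization-type estimates that become vacuous here, exactly as A\ref{A-VCset}(\ref{Anew})--A\ref{A-VCset}(\ref{A3rev}) do in the step from Theorem~\ref{theo:MarginalUnifCase} to Lemma~\ref{lemma:MarginalUnifCaseBounded}. Since $\beta=0$, the standing hypotheses read: $\delta_n\eqdef\sup_{\Xset\times\Theta}\|P_\theta^n(x,\cdot)-\pi\|_\tv\to 0$, and $\sup_{l\ge 0}\sup_{\Xset\times\Theta}\PP^{(l)}_{x,\theta}(D(\theta_n,\theta_{n-1})\ge\epsilon)\to 0$ for every $\epsilon>0$. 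After subtracting $\pi(f)$ and rescaling, we may assume $|f|_1\le 1$ and $\pi(f)=0$, so that the claim reduces to $n^{-1}\sum_{k=1}^n f(X_k)\to 0$, $\PP_{\xi_1,\xi_2}$-a.s., for any $\xi_1,\xi_2$.

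First I would upgrade $\delta_n\to 0$ to a geometric rate. As $\pi$ is $P_\theta$-invariant (a consequence of A\ref{A6}), $P_\theta^{n+m}(x,\cdot)-\pi=\int(P_\theta^n(x,dy)-\pi(dy))(P_\theta^m(y,\cdot)-\pi)$, whence $\delta_{n+m}\le\delta_n\delta_m$; since also $\delta_{n+1}\le\delta_n$ and $\delta_n\to 0$, there are $R<\infty$ and $\rho<1$ with $\delta_n\le R\rho^n$. Hence the Poisson solutions $\hat f_\theta\eqdef\sum_{n\ge 0}P_\theta^n f$ are well defined, $\sup_\theta(|\hat f_\theta|_1\vee|P_\theta\hat f_\theta|_1)\le\sum_n\delta_n=:\tilde c<\infty$, and $\hat f_\theta-P_\theta\hat f_\theta=f$; moreover a telescoping estimate for $P_\theta^n-P_{\theta'}^n$, using $|(P_\theta-P_{\theta'})g|_1\le|g-\pi(g)|_1\,D(\theta,\theta')$ together with the geometric bound applied to both resulting factors, yields a Lipschitz-in-$\theta$ control $|\hat f_\theta-\hat f_{\theta'}|_1\le\kappa D(\theta,\theta')$ for some finite $\kappa$; joint measurability of $(\theta,x)\mapsto\hat f_\theta(x)$ follows from the countably generated $\sigma$-fields.

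Next comes the martingale decomposition. Because the $X$-marginal of $\bar P(k-1;(x,\theta);\cdot)$ is $P_\theta(x,\cdot)$ and $\theta_{k-1}$ is $\F_{k-1}$-measurable, $\PE_{\xi_1,\xi_2}[\hat f_{\theta_{k-1}}(X_k)\mid\F_{k-1}]=P_{\theta_{k-1}}\hat f_{\theta_{k-1}}(X_{k-1})$; writing $\psi_k\eqdef P_{\theta_k}\hat f_{\theta_k}$ and using $f=\hat f_{\theta_{k-1}}-\psi_{k-1}$ at $X_k$,
\[
\sum_{k=1}^n f(X_k)=M_n+\big(\psi_0(X_0)-\psi_{n-1}(X_n)\big)+\sum_{k=1}^{n-1}(\psi_k-\psi_{k-1})(X_k)\eqsp,
\]
where $M_n\eqdef\sum_{k=1}^n\big(\hat f_{\theta_{k-1}}(X_k)-\psi_{k-1}(X_{k-1})\big)$ is an $\{\F_k\}$-martingale with $|M_k-M_{k-1}|\le 2\tilde c$. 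The boundary term is bounded by $2\tilde c$, and $|(\psi_k-\psi_{k-1})(X_k)|\le|\hat f_{\theta_k}|_1\,D(\theta_k,\theta_{k-1})+|\hat f_{\theta_k}-\hat f_{\theta_{k-1}}|_1\le(\tilde c+\kappa)D(\theta_k,\theta_{k-1})$. Since $\sum_k k^{-2}\PE_{\xi_1,\xi_2}[(M_k-M_{k-1})^2]<\infty$, a martingale strong law gives $n^{-1}M_n\to 0$ a.s., and everything reduces to proving $n^{-1}\sum_{k=1}^n D(\theta_k,\theta_{k-1})\to 0$, $\PP_{\xi_1,\xi_2}$-a.s.

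This last step is the one I expect to require care, since B\ref{B2} only provides convergence in probability; the remedy is to exploit its uniform-in-$l$ form. By the Markov property, for every $m>n$,
\[
\PP_{\xi_1,\xi_2}\big(D(\theta_m,\theta_{m-1})\ge\epsilon\mid\F_{m-n}\big)=\PP^{(m-n)}_{X_{m-n},\theta_{m-n}}\big(D(\theta_n,\theta_{n-1})\ge\epsilon\big)\le u_n\eqsp,
\]
where $u_n\eqdef\sup_{l\ge 0}\sup_{\Xset\times\Theta}\PP^{(l)}_{x,\theta}(D(\theta_n,\theta_{n-1})\ge\epsilon)\to 0$. Fixing $\epsilon,\epsilon'>0$ and $n_0$ with $u_{n_0}\le\epsilon'$ and splitting the time axis into the $n_0$ residue classes modulo $n_0$, on each class the indicators $\un\{D(\theta_m,\theta_{m-1})\ge\epsilon\}$ have conditional expectation at most $\epsilon'$ given the $\sigma$-field $n_0$ steps earlier, so subtracting those conditional expectations leaves bounded martingale differences, whose Ces\`aro averages vanish a.s.; summing over the $n_0$ classes yields $\limsup_N N^{-1}\sum_{m\le N}\un\{D(\theta_m,\theta_{m-1})\ge\epsilon\}\le\epsilon'$ a.s. Since $D\le 2$, one has $N^{-1}\sum_{m\le N}D(\theta_m,\theta_{m-1})\le\epsilon+2N^{-1}\sum_{m\le N}\un\{D(\theta_m,\theta_{m-1})\ge\epsilon\}$, and letting $\epsilon'\downarrow 0$ along a sequence and then $\epsilon\downarrow 0$ completes the proof --- without A\ref{A2} or A\ref{A5} ever being invoked.
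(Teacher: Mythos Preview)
Your argument is correct, but it follows a genuinely different route from the paper's. The paper proves Proposition~\ref{prop:SLLNUnboundedUnifCaseBounded} by specializing the resolvent machinery of Theorem~\ref{theo:SLLNUnboundedUnifCase}: with $\Dset=\Xset$ and $\beta=0$ the stopping times $\tau^k$ become deterministic, the bound of Theorem~\ref{theo:controleG} simplifies to $|\hat g_a^{(l)}(x,\theta)|\le |\bar f|_1\big(n+\epsilon n(1-(1-a)^n)^{-1}\big)$, and the four-term decomposition with $a=a_n$ is handled via the martingale-array inequality of Lemma~\ref{lem:Birnbaum}. By contrast, you first observe that uniform total-variation ergodicity is automatically geometric (via submultiplicativity $\delta_{n+m}\le\delta_n\delta_m$), which lets you build the Poisson solutions $\hat f_\theta=\sum_n P_\theta^n f$ directly and obtain the Lipschitz bound $|\hat f_\theta-\hat f_{\theta'}|_1\le\kappa D(\theta,\theta')$; the decomposition then involves an ordinary bounded-increment martingale, so the standard SLLN suffices and no array inequality is needed. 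Your reduction to $n^{-1}\sum_k D(\theta_k,\theta_{k-1})\to 0$ a.s.\ and the block-martingale trick exploiting the uniformity in $l$ of B\ref{B2} are clean and self-contained.

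The trade-off is clear: your approach is more elementary here, but it relies precisely on the continuity estimate $|\hat f_\theta-\hat f_{\theta'}|_1\le\kappa D(\theta,\theta')$ that the paper's resolvent construction was designed to avoid (see the discussion in Section~\ref{subsec:MethodsProof}). In the general subgeometric setting of Theorem~\ref{theo:SLLNUnboundedUnifCase} that Lipschitz bound is not available, whereas the resolvent $\hat g_a^{(l)}$---being defined through the adaptive process itself---carries over unchanged. So your proof is a legitimate shortcut specific to the bounded, uniformly ergodic case, while the paper's argument is a specialization of a method built for the harder theorem.
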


\subsection{Discussion}\label{sec:discussionUnif}
\subsubsection{Non-adaptive case}
We start by comparing our assumptions to assumptions in Markov chain theory
under which the law of large numbers hold. In the setup above, taking
$\Theta=\{\theta_\star\}$ and $H(\theta_\star,x) = \theta_\star$ reduces
$\{X_n, n\geq 0\}$ to a Markov chain with transition kernel $P_{\theta_\star}$.
Assume that $P_{\theta_\star}$ is Harris-recurrent.

In that case, a condition which is known to be minimal and to imply ergodicity
in total variation norm is that $P_{\theta_\star}$ is an aperiodic positive
Harris recurrent transition kernel \cite[Theorems 11.0.1 and
13.0.1]{meynettweedie93}.  Condition A\ref{A-VCset}(\ref{Anew}) is stronger
than positive Harris recurrence since it requires $\sup_\Cset \PE_x
[\rate(\tau_\Cset)]<+\infty$ for some rate $\rate$, $\rate(n)>> n$.
Nevertheless, as discussed in the proof (see remark~\ref{rem:YanBai},
Section~\ref{sec:Proofs}), the condition $\sum_n \{1/\rate(n) \} <+\infty$ is
really designed for the adaptive case.  A\ref{A-VCset}(\ref{A4rev}) is stronger
than what we want to prove (since A\ref{A-VCset}(\ref{A4rev}) implies the
conclusion of Theorem~\ref{theo:MarginalUnifCase} in the non-adaptive case);
this is indeed due to our technique of proof which is based on the comparison
of the adaptive process to a process - namely, a Markov chain with transition
kernel $P_\theta$ - whose stationary distribution is $\pi$. Our proof is thus
designed to address the adaptive case. Finally, B\ref{B1} is trivially true.

For the strong law of large numbers (Theorem \ref{theo:SLLNUnboundedUnifCase}),
B\ref{B2} is still trivially true in the Markovian case and A\ref{A6} is
implied by A\ref{A2} and A\ref{A5} combined with the assumption that
$P_{\theta_\star}$ is $\phi$-irreducible and aperiodic (see
Appendix~\ref{app:UniformControl} and references therein).  In the Markovian
case, whenever $P_{\theta_\star}$ is $\phi$-irreducible and aperiodic,
A\ref{A2} and A\ref{A5} are known sufficient conditions for a strong law of
large numbers for $f \in \L_{V^{1-\alpha}}$, which is a bit stronger than the
conclusions of Theorem~\ref{theo:SLLNUnboundedUnifCase}.  This slight loss of
efficiency is due to the technique of proof based on martingale theory (see
comments Section~\ref{subsec:MethodsProof}).  Observe that in the geometric
case, there is the same loss of generality in \cite[Theorem 8]{andrieuetal06}.
More generally, any proof of the law of large numbers based on the martingale
theory (through for example the use of the Poisson's equation or of the
resolvent kernel) will incur the same loss of efficiency since limit theorems
exist only for $L^p$-martingale with $p>1$.

\subsubsection{Checking assumptions A\ref{A-VCset}(\ref{A4rev}) and A\ref{A6}}
\label{subsec:CheckCond}
A\ref{A-VCset}(\ref{A4rev}) and A\ref{A6} are the most technical of our
assumptions. Contrary to the case of a single kernel, the relations between
A\ref{A-VCset}(\ref{A4rev}) (resp. A\ref{A6}) and
A\ref{A-VCset}(\ref{Anew})-A\ref{A2} (resp. A\ref{A2}, A\ref{A5}) are not
completely well understood. Nevertheless these assumptions can be checked under
conditions which are essentially of the form A\ref{A2}, A\ref{A5} plus the
assumptions that each transition kernel $P_\theta$ is $\phi$-irreducible and
aperiodic, as discussed in Appendix~\ref{app:UniformControl}.

\subsubsection{On the uniformity in $\theta$ in assumptions  A\ref{A-VCset}(\ref{Anew}), A\ref{A-VCset}(\ref{A4rev}), A\ref{A2} and A\ref{A5}}
We have formulated A\ref{A-VCset}(\ref{Anew}), A\ref{A-VCset}(\ref{A4rev}),
A\ref{A2} and A\ref{A5} such that all the constants involved are independent of
$\theta$, for $\theta \in\Theta$.  Intuitively, this corresponds to AMCMC
algorithms based on kernels with overall similar ergodicity properties. This
uniformity assumption might seem unrealistically strong at first. But the next
example shows that when these conditions do not hold
uniformly in $\theta$ for $\theta \in\Theta$, pathologies can occur if the
adaptation parameter can wander to the boundary of $\Theta$.

\begin{example}
  The example is adapted from \cite{winkler03}. Let $\Xset=\{0,1\}$ and
  $\{P_\theta,\;\theta\in(0,1)\}$ be a family of transition matrices with
  $P_\theta(0,0)=P_\theta(1,1)=1-\theta$. Let $\{\theta_n, n\geq 0\}$,
  $\theta_n \in (0,1)$, be a deterministic sequence of real numbers decreasing
  to $0$ and $\{X_n, n\geq 0\}$ be a non-homogeneous Markov chain on $\{0,1\}$
  with transition matrices $\{P_{\theta_n}, n\geq 0\}$. One can check that
  $D(\theta_n,\theta_{n-1})\leq \theta_{n-1}-\theta_n$ for all $n\geq 1$ so
  that B\ref{B1} and B\ref{B2} hold.
  
  For any compact subset $\compact$ of $(0,1)$, it can be checked that
  A\ref{A-VCset}(\ref{Anew}), A\ref{A-VCset}(\ref{A4rev}), A\ref{A2} and
  A\ref{A5} hold uniformly for all $\theta\in\compact$. But these assumptions
  do not hold uniformly for all $\theta\in (0,1)$. Therefore Theorems
  \ref{theo:MarginalUnifCase} and \ref{theo:SLLNUnboundedUnifCase} do not
  apply.  Actually one can easily check that $\PP_{x,\theta_0}\left(X_n\in
    \cdot\right) \to \pi(\cdot)$ as $n\to\infty$, but that
  $\PE_{x,\theta_0}\left[\left(n^{-1}\sum_{k=1}^n
      f(X_k)-\pi(f)\right)^2\right]$ do not converge to $0$ for bounded
  functions $f$. That is, the marginal distribution of $X_n$ converges to $\pi$
  but a weak law of large numbers fails to hold.
\end{example}

This raises the question of how to construct AMCMC when
A\ref{A-VCset}(\ref{Anew}), A\ref{A-VCset}(\ref{A4rev}), A\ref{A2} and
A\ref{A5} do not hold uniformly for all $\theta\in\Theta$. When these
assumptions hold uniformly on any compact subsets of $\Theta$ and the
adaptation is based on stochastic approximation, one approach is to stop the
adaptation or to reproject $\theta_n$ back on $\mathcal{K}$ whenever
$\theta_n\notin\mathcal{K}$ for some fixed compact $\mathcal{K}$ of $\Theta$. A
more elaborate strategy is Chen's truncation method which - roughly speaking -
reinitializes the algorithm with a larger compact, whenever
$\theta_n\notin\mathcal{K}$ (\cite{chen:zhu:1986,chen:gua:gao:1988}).  A third
strategy consists in proving a drift condition on the bivariate process
$\{(X_n,\theta_n), n \geq 0\}$ in order to ensure the stability of the process
(\cite{andrieu:vlad:2008}, see also \cite{benveniste:metivier:priouret:1987}).
This question is however out of the scope of this paper; the use of the Chen's
truncation method to weaken our assumption is addressed in
\cite{atchade:fort:2008b}.

\subsubsection{Comparison with the literature}
\label{subsec:CompLite}
The convergence of AMCMC has been considered in a number of early works, most
under a geometric ergodicity assumption. \cite{haarioetal00} proved the
convergence of the adaptive Random Walk Metropolis (ARWM) when the state space
is bounded. Their results were generalized to unbounded spaces in
\cite{atchadeetrosenthal03} assuming the diminishing adaptation assumption and
a geometric drift condition of the form
\begin{equation}\label{GeoDrift}P_\theta V(x)\leq \lambda V(x)+b\textbf{1}_C(x),\end{equation}
for $\lambda\in (0,1)$, $b<\infty$ and $\theta\in\Theta$.

\cite{andrieuetal06} undertook a thorough analysis of adaptive chains under the
geometric drift condition (\ref{GeoDrift}) and proved a strong law of large
numbers and a central limit theorem. \cite{andrieuetatchade05} gives a theoretical discussion on the efficiency of
AMCMC under (\ref{GeoDrift}).

\cite{rosenthaletroberts05} improves on the literature by relaxing the
convergence rate assumption on the kernels.  They prove the convergence of the
marginal and a weak law of large numbers for bounded functions.  But their
analysis requires a uniform control on certain moments of the drift function, a
condition which is easily checked in the geometric case (i.e. when
A\ref{Adrift} or A\ref{A5} is replaced with (\ref{GeoDrift})).  Till recently,
it was an open question in the polynomial case but this has been recently
solved by \cite{Bai:2008} - contemporaneously with our work - who proves that
such a control holds under conditions which are essentially of the form
A\ref{Adrift}.

\cite{yang:2007} tackles some open questions mentioned in
\cite{rosenthaletroberts05}, by providing sufficient conditions - close to the
conditions we give in Theorems~\ref{theo:MarginalUnifCase} and
\ref{theo:SLLNUnboundedUnifCase} - to ensure convergence of the marginals and a
weak law of large numbers for bounded functions.  The conditions in
\cite[Theorems 3.1 and 3.2]{yang:2007} are stronger than our conditions.  But
we have noted some skips and mistakes in the proofs of these theorems.

 \subsubsection{Comments on the methods of proof}
\label{subsec:MethodsProof}
The proof of Theorem \ref{theo:MarginalUnifCase} is based on an argument
extended from \cite{rosenthaletroberts05} which can be sketched heuristically
as follows. For $N$ large enough, we can expect $P^N_{\theta_n}(X_n,\cdot)$ to
be within $\epsilon$ to $\pi$ (by ergodicity). On the other hand, since the
adaptation is diminishing, by waiting long enough, we can find $n$ such that
the distribution of $X_{n+N}$ given $(X_n,\theta_n)$ is within $\epsilon$ to
$P^N_{\theta_n}(X_n,\cdot)$.  Combining these two arguments, we can then
conclude that the distribution of $X_{n+N}$ is within $2\epsilon$ to $\pi$.
This is essentially the argument of \cite{rosenthaletroberts05}. The difficulty
with this argument is that the distance between $P_{\theta_n}^N(x,\cdot)$ and
$\pi$ depends in general on $x$ and can rarely be bounded uniformly in $x$. We
solve this problem here by introducing some level set $\Cset$ of $V$ and by
using two basic facts: \textit{(i)} under A\ref{A-VCset}(\ref{Anew}), the
process cannot wait too long before coming back in $\Cset$; \textit{(ii)} under
A\ref{A-VCset}(\ref{A4rev}-\ref{A3rev}), a bound on the distance between
$P_{\theta_n}^N(x,\cdot)$ and $\pi$ uniformly in $x$, for $x \in \Cset$, is
possible.

The proof of Theorem \ref{theo:SLLNUnboundedUnifCase} is based on a resolvent
kernel approach that we adapted from \cite{merlevedeetal06} (see also
\cite{mw00}), combined with martingale theory. Another possible route to the
SLLN is the Poisson's equation technique which has been used to study adaptive
MCMC in \cite{andrieuetal06}.  Under A\ref{A2} and A\ref{A5}, a solution
$g_\theta$ to the Poisson's equation with transition kernel $P_\theta$ exists
for any $f\in\mathcal{L}_{V^\beta}$, $0\leq \beta\leq 1-\alpha$ and
$g_\theta\in\mathcal{L}_{V^{\beta+\alpha}}$. But in order to use
$\{g_\theta,\;\theta\in\Theta\}$ to obtain a SLLN for $f$, we typically need to
control $|g_\theta-g_{\theta'}|$ which overall can be expensive.  Here we avoid
these pitfalls by introducing the resolvent $\hat g_a(x,\theta)$ of the process
$\{X_n\}$, defined by
\[\hat g_a^{(l)}(x,\theta) \eqdef \sum_{j\geq 0}(1-a)^{j+1}\PE_{x,\theta}^{(l)}\left[f(X_j)\right] \eqsp, \;\;x\in\Xset,\theta\in\Theta,a\in(0,1), l \geq 0 \eqsp.
\]

%%-------------------------------
%%
\section{Examples}
\label{sec:Example}

%%
%%----------------------------------------

\subsection{A toy example} We first consider an example discussed in
\cite{atchadeetrosenthal03} (see also \cite{rosenthaletroberts05}). Let $\pi$
be a target density on the integers $\{1, \cdots, K \}$, $K \geq 4$. Let
$\{P_\theta, \theta \in \{1, \cdots, M\} \}$ be a family of Random Walk
Metropolis algorithm with proposal distribution $q_\theta$, the uniform
distribution on $\{x-\theta, \cdots, x-1, x+1, \cdots, x+\theta \}$.

Consider the sequence $\{(X_n,\theta_{n}), n\geq 0 \}$ defined as follows:
given $X_n,\theta_n$,
\begin{itemize}
\item the conditional distribution of $X_{n+1}$ is $P_{\theta_n}(X_n, \cdot)$.
\item if $X_{n+1} = X_n$, set $\theta_{n+1} = \max(1, \theta_n -1)$ with
  probability $p_{n+1}$ and $\theta_{n+1} = \theta_n$ otherwise; if $X_{n+1}
  \neq X_n$, set $\theta_{n+1} = \min(M, \theta_n +1)$ with probability
  $p_{n+1}$ and $\theta_{n+1} = \theta_n$ otherwise.
\end{itemize}
This algorithm defines a non-homogeneous Markov chain - still denoted
$\{(X_n,\theta_{n}), n\geq 0 \}$ - on a canonical probability space endowed
with a probability $\PP$. The transitions of this Markov process are given by
the family of transition kernels $\{\bar P(n; (x,\theta), (dx', d\theta'),
n\geq 0 \}$ where
\begin{multline*}
  \bar P(n; (x,\theta), (dx', d\theta') = P_\theta(x,dx') \; \left( \un_{x=x'} \left\{ p_{n+1} \ \delta_{1 \vee (\theta-1)}(d\theta')   + (1-p_{n+1}) \ \delta_{\theta}(d\theta')  \right\}   \right. \\
  \left. + \un_{x\neq x'} \left\{ p_{n+1} \ \delta_{M \wedge
        (\theta+1)}(d\theta') + (1-p_{n+1}) \ \delta_{\theta}(d\theta')
    \right\} \right) \eqsp.
\end{multline*}

In this example, each kernel $P_\theta$ is uniformly ergodic~: $P_\theta$ is
$\phi$-irreducible, aperiodic, possesses an invariant probability measure $\pi$
and
\[
\lim_n \sup_{x \in \Xset} \|P_\theta^n(x,\cdot) - \pi(\cdot) \|_{\tv} = 0 \eqsp.
\]
Since $\Theta$ is finite, this implies that A\ref{A-VCset}(\ref{A4rev}) (resp.
A\ref{A6}) hold with $V=1$ (resp. $\Dset = \Xset$ and $\beta =0$). Furthermore,
$\PE_{x,\theta}^{(l)}\left[D(\theta_n, \theta_{n+1})\right] \leq 2 p_{n+1}$ so
that B\ref{B1} (resp. B\ref{B2}) hold with any probability measures $\xi_1,
\xi_2$ (resp.  with $\Dset = \Xset$) provided $p_n \to 0$. By
Lemma~\ref{lemma:MarginalUnifCaseBounded} combined with
Theorem~\ref{theo:MarginalUnifCase}, and by
Proposition~\ref{prop:SLLNUnboundedUnifCaseBounded}, we have
\begin{prop}
  Assume $\lim_n p_n =0$. For any probability distributions $\xi_1, \xi_2$ on
  $\Xset, \Theta$,
  \begin{enumerate}[(i)]
  \item $\sup_{\{f, |f|_1 \leq 1 \}} |\PE_{\xi_1,\xi_2}[f(X_n)] - \pi(f)| \to
    0$
  \item For any bounded function $f$
\[
n^{-1} \sum_{k=1}^n f(X_k) \to \pi(f) \eqsp, \qquad \qquad \PP_{\xi_1,\xi_2}-\as
\]
  \end{enumerate}
\end{prop}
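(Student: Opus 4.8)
The plan is to assemble the observations collected just before the statement into a verification of the hypotheses of Lemma~\ref{lemma:MarginalUnifCaseBounded}, Theorem~\ref{theo:MarginalUnifCase} and Proposition~\ref{prop:SLLNUnboundedUnifCaseBounded}. First I would record the uniform-in-$\theta$ ergodicity. Each $P_\theta$ is a $\phi$-irreducible aperiodic Metropolis kernel on the finite set $\{1,\dots,K\}$ with invariant law $\pi$, hence uniformly ergodic, so that $\sup_x\|P_\theta^n(x,\cdot)-\pi\|_{\tv}\to 0$; since $\Theta=\{1,\dots,M\}$ is finite, the supremum over $\theta$ is a finite maximum and therefore
\[
\lim_{n\to+\infty}\ \sup_{\Xset\times\Theta}\ \|P_\theta^n(x,\cdot)-\pi\|_{\tv}=0\eqsp.
\]
This is exactly the hypothesis of Lemma~\ref{lemma:MarginalUnifCaseBounded}, which then supplies A\ref{A-VCset}(\ref{Anew}) and A\ref{A-VCset}(\ref{A3rev}) with a bounded function $V$ and $\Cset=\Xset$; and taking $V\equiv 1$, so that $\|\cdot\|_{V^\beta}=\|\cdot\|_{\tv}$ for every $\beta\geq 0$, the displayed limit is simultaneously A\ref{A-VCset}(\ref{A4rev}) and A\ref{A6} with $\Dset=\Xset$ and $\beta=0$. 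Hence A\ref{A-VCset} holds (and, for the law of large numbers, A\ref{A6} does too).

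Next I would check the diminishing adaptation conditions. From the explicit form of $\bar P(n;(x,\theta),\cdot)$ in this example, conditionally on $\F_{n-1}$ the algorithm keeps $\theta_n=\theta_{n-1}$ with probability $1-p_n$ (with probability $1-p_{l+n}$ under the shifted law $\PP_{x,\theta}^{(l)}$) and otherwise moves $\theta$ by at most one unit. Since $D(\theta,\theta)=0$ and $D(\cdot,\cdot)\leq 2$ always, this yields $\PE_{x,\theta}^{(l)}[D(\theta_n,\theta_{n-1})]\leq 2\,p_{l+n}\leq 2\sup_{m\geq n}p_m$, and then, by Markov's inequality together with $p_m\to 0$, for every $\epsilon>0$
\[
\sup_{l\geq 0}\ \sup_{\Xset\times\Theta}\ \PP_{x,\theta}^{(l)}\bigl(D(\theta_n,\theta_{n-1})\geq\epsilon\bigr)\ \leq\ \frac{2}{\epsilon}\,\sup_{m\geq n}p_m\ \longrightarrow\ 0\eqsp.
\]
Specialising to $l=0$ and integrating in $(\xi_1,\xi_2)$ gives B\ref{B1} for arbitrary $\xi_1,\xi_2$; keeping the suprema over $l$ and over the (here trivial) level set $\Dset=\Xset$ gives B\ref{B2}.

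Finally, assertion (i) follows by applying Theorem~\ref{theo:MarginalUnifCase}, whose assumptions A\ref{A-VCset} and B\ref{B1} were verified above, and assertion (ii) follows by applying Proposition~\ref{prop:SLLNUnboundedUnifCaseBounded}, whose assumptions are A\ref{A6} and B\ref{B2} with $\Dset=\Xset$, $\beta=0$. I do not expect any genuine obstacle here: the only points needing a moment's care are the elementary fact that a $\phi$-irreducible aperiodic Markov chain on a finite state space is uniformly ergodic, the remark that the $V^\beta$-norm in A\ref{A6} collapses to the total variation norm when $V\equiv 1$ and $\beta=0$, and the harmless index shift $p_n\mapsto p_{l+n}$ that enters B\ref{B2} through the shifted laws $\PP_{x,\theta}^{(l)}$.
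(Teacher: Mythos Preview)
Your proposal is correct and follows essentially the same route as the paper: the paragraph preceding the proposition already records the uniform-in-$\theta$ ergodicity (finite $\Xset$ and finite $\Theta$), the bound $\PE_{x,\theta}^{(l)}[D(\theta_n,\theta_{n-1})]\leq 2p_{n+1}$, and then invokes Lemma~\ref{lemma:MarginalUnifCaseBounded} with Theorem~\ref{theo:MarginalUnifCase} for (i) and Proposition~\ref{prop:SLLNUnboundedUnifCaseBounded} for (ii). If anything, you are slightly more careful than the paper in tracking the index shift $p_n\mapsto p_{l+n}$ under the shifted laws $\PP_{x,\theta}^{(l)}$ when verifying B\ref{B2}.
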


\subsection{The adaptive Random Walk Metropolis of \cite{haarioetal00}}
\label{sec:ex2}
We illustrate our results with the adaptive Random Walk Metropolis of
\cite{haarioetal00}. The Random Walk Metropolis (RWM) algorithm is a popular
MCMC algorithm~\cite{hastings:1970,metropolis:1953}.  Let a target density
$\pi$, absolutely continuous w.r.t. the Lebesgue measure $\mu_{Leb}$ with
density still denoted by $\pi$.  Choose a proposal distribution with density
w.r.t. $\mu_{Leb}$ denoted $q$, and assume that $q$ is a positive symmetric
density on $\rset^p$. The algorithm generates a Markov chain $\{X_n, n\geq 0\}$
with invariant distribution $\pi$ as follows.  Given $X_n=x$, a new value
$Y=x+Z$ is proposed where $Z$ is generated from $q(\cdot)$.  Then we either
'accept' $Y$ and set $X_{n+1}=Y$ with probability
$\alpha(x,Y)\eqdef\min\left(1,\pi(Y)/\pi(x)\right)$ or we 'reject' $Y$ and set
$X_{n+1}=x$.

For definiteness, we will assume that $q$ is a zero-mean multivariate Gaussian
distribution (this assumption can be replaced by regularity conditions and
moment conditions on the proposal distribution). Given a proposal distribution
with finite second moments, the convergence rate of the RWM kernel depends
mainly on the tail behavior of the target distribution $\pi$. If $\pi$ is
super-exponential in the tails with regular contours, then the RWM kernel is
typically geometrically ergodic (\cite{jarnerethansen98}).  Otherwise, it is
typically sub-geometric
(\cite{gersendeetmoulines00,gersendeetmoulines03,doucetal04}).

Define
\[
\mu_\star\eqdef\int_\Xset x \; \pi(x) \; \mu_{Leb}(dx) \eqsp, \qquad
\Sigma_\star\eqdef \int_\Xset xx^T \; \pi(x)\mu_{Leb}(dx) -\mu_\star \;
\mu_\star^{T} \eqsp,
\]
resp. the expectation and the covariance matrix of $\pi$ ($\cdot^T$ denotes the
transpose operation).  Theoretical results suggest setting the
variance-covariance matrix $\Sigma$ of the proposal distribution
$\Sigma=c_\star\Sigma_\star$ where $c_\star$ is set so as to reach the optimal
acceptance rate $\bar\alpha$ in stationarity (typically $\bar\alpha$ is set to
values around $0.3-0.4$). See e.g.  \cite{robertsetrosenthal01} for more
details. \cite{haarioetal00} have proposed an adaptive algorithm to learn
$\Sigma_*$ adaptively during the simulation. This algorithm has been studied in
detail in \cite{andrieuetal06} under the assumption that $\pi$ is
super-exponential in the tails. An adaptive algorithm to find the optimal value
$c_\star$ has been proposed in \cite{atchadeetrosenthal03} (see also
\cite{atchade05}) and studied under the assumption that $\pi$ is
super-exponential in the tails. We extend these results to cases where $\pi$ is
sub-exponential in the tails.

Let $\Theta_+$ be a convex compact of the cone of $p\times p$ symmetric
positive definite matrices endowed with the Shur norm $|\cdot|_\s$,
$|A|_\s\eqdef \sqrt{\mathrm{Tr}(A^T \, A)}$.  For example, for $\mathsf{a}, M >
0$, $\Theta_+ = \{ \text{$A+\mathsf{a} \, \mathrm{Id}$: $A$ is symmetric
  positive semidefinite and } |A|_s \leq M \}$.  Next, for
$-\infty<\kappa_l<\kappa_u<\infty$ and $\Theta_\mu$ a compact subset of
$\Xset$, we introduce the space $\Theta \eqdef \Theta_\mu \times \Theta_+\times
[\kappa_l,\kappa_u]$. For $\theta =(\mu,\Sigma,c)\in \Theta$, denote by
$P_\theta$ the transition kernel of the RWM algorithm with proposal
$q_{\theta}$ where $q_\theta$ stands for the multivariate Gaussian distribution
with variance-covariance matrix $e^c \Sigma$.

Consider the adaptive RWM defined as follows

\begin{algo}\label{arwm1}
\begin{description}
\item [Initialization] Let $\bar\alpha$ be the target acceptance probability.
  Choose $X_0\in\Xset$, $(\mu_0,\Sigma_0,c_0)\in\Theta$.
\item [Iteration]
Given $(X_n,\mu_n,\Sigma_n,c_n)$:
\begin{description}
\item [1] Generate $Z_{n+1}\sim q_{\theta_n} d\mu_{Leb}$ and set $Y_{n+1} = X_n
  +Z_{n+1}$. With probability $\alpha(X_n,Y_{n+1})$ set $X_{n+1}=Y_{n+1}$ and
  with probability $1-\alpha(X_n,Y_{n+1})$, set $X_{n+1}=X_n$.
%\item [2] If $d_n=1$, set $\mu_{n+1}=\mu_n$, $\Sigma_{n+1}=\Sigma_n$ and $d_{n+1}=1$.
\item [2] Set
  \begin{align}
    \mu & = \mu_n+(n+1)^{-1}\left(X_{n+1}-\mu_n\right) \eqsp, \label{ex2:defiMu} \\
    \Sigma & = \Sigma_n+(n+1)^{-1}\left[\left(X_{n+1}-\mu_n\right)\left(X_{n+1}-\mu_n\right)^T-\Sigma_n\right] \eqsp, \label{ex2:defiSigma} \\
    c & = c_n+\frac{1}{n+1}\left(\alpha(X_n,Y_{n+1})-\bar\alpha\right) \eqsp.
    \label{ex2:defic}
  \end{align}
\item [3] If $(\mu, \Sigma,c)\in\Theta$, set $\mu_{n+1} = \mu$,
  $\Sigma_{n+1}=\Sigma$ and $c_{n+1}=c$.  Otherwise, set $\mu_{n+1} = \mu_n$,
  $\Sigma_{n+1}=\Sigma_n$ and $c_{n+1}=c_n$.
\end{description}
\end{description}
\end{algo}

This is an algorithmic description of a random process $\{(X_n, \theta_n),
n\geq 0\}$ which is a non-homogeneous Markov chain with successive transitions
kernels $\{\bar P(n; (x,\theta), (dx',d \theta')), n\geq 0 \}$ given by
\begin{multline*}
  \bar P(n; (x,\theta), (dx',d \theta')) = \int q_\theta(z)  \ \left\{ \alpha(x,x+z) \delta_{x+z}(dx')  + (1-\alpha(x,x+z)) \delta_x(dx') \right\} \cdots \\
  \left(\un_{\{\phi(\theta,x+z,x') \in
      \Theta\}}\delta_{\phi(\theta,x+z,x')}(d\theta') + \un_{\{\phi(\theta,x+z,x')
      \notin \Theta\}}\delta_{\theta}(d\theta') \right) \ d\mu_{Leb}(dz)
\end{multline*}
where $\phi$ is the function defined from the rhs expressions of
(\ref{ex2:defiMu}) to (\ref{ex2:defic}). Integrating over $\theta'$, we see
that for any $A \in \Xsigma$,
\[
\int_{A \times \Theta} \bar P(n;(x,\theta),(dx',d\theta')) = P_\theta(x,A)
\eqsp.
\]
\begin{lemma}
\label{lem:example:smallset}
  Assume that $\pi$ is bounded from below and from above on compact sets. Then
  any compact subset $\Cset$ of $\Xset$ with $\mu_{Leb}(\Cset)>0$ satisfies
  A\ref{A2}.
\end{lemma}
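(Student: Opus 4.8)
The plan is to verify the minorization condition A\ref{A2} directly from the structure of the Random Walk Metropolis kernel, exploiting that the proposal density $q_\theta$ is a Gaussian (hence bounded below on compacts, uniformly in $\theta$ since $\Theta$ is compact) and that the acceptance probability is bounded below on $\Cset\times\Cset$.

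First I would write out, for $x\in\Cset$ and $A\in\Xsigma$ with $A\subset\Cset$, the lower bound
\[
P_\theta(x,A)\;\geq\;\int_{A} q_\theta(y-x)\,\alpha(x,y)\,\mu_{Leb}(dy)\;=\;\int_A q_\theta(y-x)\,\min\!\left(1,\frac{\pi(y)}{\pi(x)}\right)\mu_{Leb}(dy),
\]
discarding the rejection part of the kernel since it can only help. Since $\Cset$ is compact with $\mu_{Leb}(\Cset)>0$, the hypothesis that $\pi$ is bounded above and below on compacts gives constants $0<m_\Cset\leq \pi\leq M_\Cset<\infty$ on $\Cset$, so $\alpha(x,y)\geq m_\Cset/M_\Cset$ for $x,y\in\Cset$. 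Next, because $\Theta_+$ is a compact subset of the positive-definite cone and $c$ ranges over a compact interval, the family of Gaussian densities $\{q_\theta\}$ has eigenvalues of the covariance bounded away from $0$ and $\infty$ uniformly in $\theta$; hence for $x,y\in\Cset$ (a bounded set, so $|y-x|$ is bounded) there is a constant $q_-=q_-(\Cset,\Theta)>0$ with $q_\theta(y-x)\geq q_-$ for all $\theta\in\Theta$. Combining these two bounds yields
\[
P_\theta(x,A)\;\geq\;q_-\,\frac{m_\Cset}{M_\Cset}\,\mu_{Leb}(A)\;=\;q_-\,\frac{m_\Cset}{M_\Cset}\,\mu_{Leb}(\Cset)\;\nu(A),
\]
where $\nu(\cdot)\eqdef \mu_{Leb}(\cdot\cap\Cset)/\mu_{Leb}(\Cset)$ is a probability measure on $\Xset$; setting $\varepsilon\eqdef q_-\,(m_\Cset/M_\Cset)\,\mu_{Leb}(\Cset)>0$ gives exactly $P_\theta(x,\cdot)\geq \un_\Cset(x)\,\varepsilon\,\nu(\cdot)$, which is A\ref{A2}.

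The only mild subtlety — not really an obstacle — is the uniformity in $\theta$ of the lower bound $q_\theta(y-x)\geq q_-$: one must note that $(\theta,z)\mapsto q_\theta(z)$ is continuous and strictly positive on the compact set $\Theta\times\{z:\ |z|\leq \mathrm{diam}(\Cset)\}$, so it attains a positive minimum there; this is where compactness of $\Theta$ (in all three coordinates $\mu,\Sigma,c$) is used. Everything else is a routine chain of inequalities, and no properties of $\pi$ beyond local boundedness above and below are needed.
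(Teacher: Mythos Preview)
Your argument is correct. The paper itself gives no proof beyond citing \cite[Theorem~2.2]{robertsettweedie96}, and what you have written is precisely the standard computation underlying that result: bound the Metropolis kernel from below by the accepted-move part, then use positivity of $q_\theta$ and of $\pi$ on the compact $\Cset$ to extract a uniform minorization by normalized Lebesgue measure on $\Cset$.

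One point worth flagging is that the cited theorem concerns a single Metropolis kernel, whereas A\ref{A2} demands a minorization with the \emph{same} $\varepsilon$ and $\nu$ for every $\theta\in\Theta$. You handle this correctly and explicitly via the compactness of $\Theta$ (so that $(\theta,z)\mapsto q_\theta(z)$ attains a positive minimum on $\Theta\times\{|z|\le\mathrm{diam}(\Cset)\}$), which is a detail the bare citation leaves implicit. In that sense your write-up is slightly more informative than the paper's one-line reference, though the underlying idea is identical.
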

\begin{proof}
  See \cite[Theorem 2.2]{robertsettweedie96}.
\end{proof}

Following (\cite{gersendeetmoulines00}), we assume that $\pi$ is
sub-exponential in the tails: \debutD
\item \label{D1} $\pi$ is positive and continuous on $\rset^p$, and twice
  continuously differentiable in the tails.
\item \label{D2} there exist $m\in (0,1)$, positive constants $d_i<D_i$, $i=0,1,2$ and
  $r,R>0$ such that  for $|x|\geq R$:
\begin{enumerate}[(i)]
\item \label{D2z} $\pscal{\frac{\nabla \pi(x)}{|\nabla \pi(x)|}}{\frac{x}{|x|}}
  \leq -r $.
\item  \label{D2a}  $d_0|x|^m\leq -\log\pi(x)\leq D_0|x|^m$,
\item \label{D2b}   $d_1|x|^{m-1}\leq |\nabla\log\pi(x)|\leq D_1|x|^{m-1}$,
\item \label{D2c} $d_2|x|^{m-2}\leq |\nabla^2\log\pi(x)|\leq D_2|x|^{m-2}$.
\end{enumerate}
\finD

Examples of target density that satisfies D\ref{D1}-D\ref{D2} are the Weibull
distributions on $\rset$ with density $\pi(x) \propto |x|^{m-1} \exp(-\beta
|x|^m)$ (for large $|x|$), $\beta>0$, $m \in (0,1)$. Multidimensional examples
are provided in \cite{gersendeetmoulines00}.

\subsubsection{Law of large numbers for exponential functions}
In this subsection, we assume that \debutD
\item \label{D3} there exist $s_\star>0$, $ 0<\upsilon<1-m$ and $0<\eta<1$ such
  that as $|x| \to+\infty$,
 \[
 \sup_{\theta \in \Theta} \ \int_{\{z, |z| \geq \eta |x|^\upsilon \}} \left(1
    \vee \frac{\pi(x)}{\pi(x+z)} \right)^{s_\star} \; \; q_\theta(z) \
 \mu_{Leb}(dz) =o\left( |x|^{2(m-1)} \right) \eqsp.
\]
\finD A sufficient condition for D\ref{D3} is that $\pi(x+z) \geq \pi(x)
\pi(z)$ for any $x$ large enough and $|z| \geq \eta |x|^\upsilon$ (which holds
true for Weibull distributions with $0<m<1$).  Indeed, we then have
\begin{multline*}
  \int_{\{z, |z| \geq \eta |x|^\upsilon \}} \left(1 \vee
    \frac{\pi(x)}{\pi(x+z)} \right)^{s_\star} \; q_\theta(z) \mu_{Leb}(dz)  \\
  \leq C\; \exp(-\lambda_\star \eta^2 |x|^{2 \upsilon}) \sup_{\theta \in
    \Theta} \ \int \exp(s_\star D_0 |z|^m) \; \exp(\lambda_\star |z|^2) \
  q_\theta(z) \mu_{Leb}(dz)
\end{multline*}
for some constant $C< +\infty$, and $\lambda_\star >0$ such that the rhs is
finite.

\begin{lemma}\label{driftRWM}
  Assume D\ref{D1}-\ref{D3}.  For $0<s \leq s_\star$, define $V_s(x)\eqdef 1 +
  \pi^{-s}(x)$. There exist $0< s \leq s_\star$ and for any $\alpha \in (0,1)$,
  there exist positive constants $b,c$ and a compact set $\Cset$ such that
\begin{equation*}
  \sup_{\theta \in \Theta} P_\theta V_s(x)\leq
  V_s(x)-c V^{1-\alpha}_s(x)+b\un_\Cset(x).
\end{equation*}
Hence A\ref{Adrift}-\ref{A6} hold.
\end{lemma}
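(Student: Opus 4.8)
\textit{Proof proposal.} The plan is to derive the stated drift inequality directly from the Metropolis form of $P_\theta$ and then to read off A\ref{Adrift}, A\ref{A2}, A\ref{A5}, A\ref{A6} from it together with Lemma~\ref{lem:example:smallset}. Writing $\alpha(x,y)=1\wedge(\pi(y)/\pi(x))$ and using $P_\theta\un=\un$, one has for $V_s=1+\pi^{-s}$
\begin{equation*}
V_s(x)-P_\theta V_s(x)=\int q_\theta(z)\,\alpha(x,x+z)\,\bigl[\pi^{-s}(x)-\pi^{-s}(x+z)\bigr]\,\mu_{Leb}(dz)\eqsp.
\end{equation*}
I would first record the elementary inequality $\alpha(x,y)\,\pi^{-s}(y)\le\pi^{-s}(x)$, valid for every $s\in(0,1)$ (it reduces to $1\wedge t\le t^{s}$ with $t=\pi(y)/\pi(x)$); it yields $|P_\theta V_s(x)-V_s(x)|\le 2\pi^{-s}(x)$, so that $\sup_{\Cset\times\Theta}P_\theta V_s<\infty$ for every compact $\Cset$. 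It therefore suffices to choose $s\in(0,s_\star]$ small enough (in particular with $s<1$), fix $\alpha\in(0,1)$, and produce $R_0<\infty$ and $c>0$ with $P_\theta V_s(x)\le V_s(x)-c\,V_s^{1-\alpha}(x)$ for all $|x|\ge R_0$ and all $\theta\in\Theta$; taking $\Cset=\{|x|\le R_0\}$ one then absorbs the values on $\Cset$ into $b\,\un_\Cset$.

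For $|x|\ge R_0$ I would split the $z$-integral at $|z|=\eta|x|^{\upsilon}$, with $\eta,\upsilon$ as in D\ref{D3}. On the large-jump set, using $\alpha(x,x+z)\,\pi^{-s}(x+z)\le\pi^{-s}(x)\bigl(1\vee\pi(x)/\pi(x+z)\bigr)^{s_\star}$ (valid for $0<s\le s_\star$) and dropping the remaining nonpositive term, assumption D\ref{D3} bounds this contribution to $P_\theta V_s(x)-V_s(x)$ by $\pi^{-s}(x)\,o(|x|^{2(m-1)})$, uniformly in $\theta$. On the small-jump set, since $\upsilon<1-m$ the increment is $o(|x|^{1-m})$, so $x+z$ remains in the tails where D\ref{D1}--D\ref{D2} hold, $\delta_z:=\log\pi(x+z)-\log\pi(x)\to 0$, and $\pi^{-s}(x+z)\asymp\pi^{-s}(x)$ there; the integrand equals $\pi^{-s}(x)\,(1\wedge e^{\delta_z})\,(1-e^{-s\delta_z})$, which is $1-e^{-s\delta_z}$ on $\{\delta_z\ge 0\}$ and $e^{\delta_z}-e^{(1-s)\delta_z}$ on $\{\delta_z<0\}$, hence analytic on each side of $\delta_z=0$. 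Pairing $z$ with $-z$, which is legitimate because $q_\theta$ is symmetric and centred, and Taylor-expanding $\delta_z=\nabla\log\pi(x)\cdot z+\tfrac12 z^{T}\nabla^2\log\pi(\xi_z)z$ together with the two functions above, the contributions linear in $z$ integrate to zero and one is left with
\begin{equation*}
V_s(x)-P_\theta V_s(x)\ \ge\ \pi^{-s}(x)\Bigl[\tfrac{s(1-s)}{2}\,\nabla\log\pi(x)^{T}\Gamma_\theta\,\nabla\log\pi(x)+\tfrac{s}{2}\,\mathrm{Tr}\bigl(\nabla^2\log\pi(x)\,\Gamma_\theta\bigr)\Bigr]-\pi^{-s}(x)\,o\bigl(|x|^{2(m-1)}\bigr),
\end{equation*}
where $\Gamma_\theta$ denotes the covariance of $q_\theta$. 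By D\ref{D2}(\ref{D2b}) and compactness of $\Theta$, $\nabla\log\pi(x)^{T}\Gamma_\theta\,\nabla\log\pi(x)\ge \sigma_-^{2}\,d_1^{2}\,|x|^{2(m-1)}$ with $\sigma_-^{2}:=\inf_{\theta\in\Theta}\lambda_{\min}(\Gamma_\theta)>0$, while $|\mathrm{Tr}(\nabla^2\log\pi(x)\,\Gamma_\theta)|\lesssim|x|^{m-2}=o(|x|^{2(m-1)})$ by D\ref{D2}(\ref{D2c}) since $m<1$; the cubic remainder is $O(\pi^{-s}(x)\,|x|^{3(m-1)})=o(\pi^{-s}(x)\,|x|^{2(m-1)})$ because the moments of the Gaussian $q_\theta$ are bounded on the compact $\Theta$. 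Thus $V_s(x)-P_\theta V_s(x)\ge c_0\,\pi^{-s}(x)\,|x|^{2(m-1)}$ for $|x|\ge R_0$, uniformly in $\theta$.

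It remains to trade $\pi^{-s}(x)|x|^{2(m-1)}$ against $V_s^{1-\alpha}(x)$: since $V_s^{1-\alpha}(x)\le 2^{1-\alpha}\pi^{-s(1-\alpha)}(x)$ while $\pi^{s\alpha}(x)\le e^{-s\alpha d_0|x|^{m}}$ by D\ref{D2}(\ref{D2a}), the ratio $\pi^{-s}(x)|x|^{2(m-1)}/V_s^{1-\alpha}(x)\gtrsim|x|^{-2(1-m)}e^{s\alpha d_0|x|^{m}}$ tends to $+\infty$, so after enlarging $R_0$ and shrinking $c$ one obtains $P_\theta V_s\le V_s-c\,V_s^{1-\alpha}+b\,\un_\Cset$ for all $x$ and all $\theta$; this is both A\ref{Adrift}(ii) and A\ref{A5}. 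Assumption A\ref{A2} holds for $\Cset$ (enlarged, if needed, to have positive Lebesgue measure) by Lemma~\ref{lem:example:smallset}; each $P_\theta$ is $\phi$-irreducible with respect to $\mu_{Leb}$ and aperiodic because $q_\theta>0$ and $\pi>0$ (\cite{robertsettweedie96}), which is A\ref{Adrift}(i); every level set of $V_s$ is of the form $\{\pi\ge\delta\}$, hence compact with positive Lebesgue measure, so Lemma~\ref{lem:example:smallset} also gives the uniform minorization A\ref{Adrift}(iii); and A\ref{A6} then follows from A\ref{A2}, A\ref{A5} and the $\phi$-irreducibility and aperiodicity of the $P_\theta$ via Appendix~\ref{app:UniformControl}.

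The main obstacle is the small-jump computation. The map $z\mapsto\alpha(x,x+z)$ is merely Lipschitz, with a ridge across the surface $\{\pi(x+z)=\pi(x)\}$, so $z\mapsto\alpha(x,x+z)[\pi^{-s}(x)-\pi^{-s}(x+z)]$ is not twice differentiable and one cannot naively Taylor-expand in $z$; the remedy is to expand in the scalar $\delta_z$, where all functions involved are analytic, and to check that the thin set on which $\mathrm{sgn}(\delta_z)$ and $\mathrm{sgn}(\nabla\log\pi(x)\cdot z)$ disagree contributes only $o(\pi^{-s}(x)|x|^{2(m-1)})$. Equally delicate is the bookkeeping that keeps every error term uniform in $x$ (through the tail estimates D\ref{D2}) and in $\theta$ (through the compactness of $\Theta$, which bounds the eigenvalues and the moments of $q_\theta$ away from $0$ and $+\infty$).
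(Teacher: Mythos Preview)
Your argument is correct and reaches the same intermediate drift $\sup_\theta P_\theta V_s(x)\le V_s(x)-c_\star V_s(x)\,|x|^{2(m-1)}+b_\star\un_\Cset(x)$ as the paper, but by a genuinely different decomposition. The paper writes $P_\theta V_s-V_s$ as an integral over all $z$ plus a correction over the rejection set $R(x)-x=\{z:\pi(x+z)<\pi(x)\}$, Taylor-expands the smooth functions $V_s$ and $\pi$ \emph{separately} (so the kink of $\alpha$ never enters the expansion), and obtains the leading term $I_1=-sV_s(x)\int_{R(x)-x}\langle z,\nabla\log\pi(x)\rangle^2 q_\theta(z)\,dz$; the bound $\int_{R(x)-x}\langle z,\nabla\log\pi(x)\rangle^2 q_\theta\ge b\,|\nabla\log\pi(x)|^2$ is imported from \cite{gersendeetmoulines00} and uses the contour-regularity condition D\ref{D2}(\ref{D2z}), while the remainders $I_2,I_3$ are controlled through a preparatory lemma bounding $\sup_{t\in[0,1]}|\nabla^2V_s(x+tz)|$ on the small-jump region. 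You instead keep the single integral and expand $(1\wedge e^{\delta_z})(1-e^{-s\delta_z})$ in the scalar $\delta_z$, which is only $C^1$ at $0$; the $z\leftrightarrow-z$ pairing then delivers the leading term $\tfrac{s(1-s)}{2}\nabla\log\pi(x)^T\Gamma_\theta\nabla\log\pi(x)$ directly from the full covariance, with no half-space lemma and no appeal to D\ref{D2}(\ref{D2z}), at the price of the extra check you flag on the thin set where $\mathrm{sgn}(\delta_z)\ne\mathrm{sgn}(\nabla\log\pi(x)\cdot z)$ and of the harmless restriction $s<1$ needed for $s(1-s)>0$. The passage from the drift to A\ref{Adrift}--A\ref{A6} via Lemma~\ref{lem:example:smallset} and Appendix~\ref{app:UniformControl} is the same as in the paper.
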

\begin{lemma} \label{ex:lem:HypB}
  Assume D\ref{D1}-\ref{D3}.  B\ref{B2} holds and B\ref{B1} holds for any
  probability measures $\xi_1$,$\xi_2$ such that $\int |\ln \pi|^{2/m} d \xi_1
  < +\infty$.
\end{lemma}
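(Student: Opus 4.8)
The plan is to control $D(\theta_n,\theta_{n-1})$ by the size of the adaptation increments, and then to bound those increments using the subgeometric drift of Lemma~\ref{driftRWM} together with the polynomial tail exponents of D\ref{D2}. First I would record a uniform Lipschitz bound for $D$: the proposal $q_\theta$ attached to $\theta=(\mu,\Sigma,c)$ depends on $\theta$ only through the covariance $e^c\Sigma$, which ranges over a compact, uniformly non-degenerate subset of the positive-definite cone as $\theta$ runs over the compact set $\Theta$; since moreover the RWM proposal is a state-independent Gaussian location family, a one-line computation with the Metropolis kernel gives $\|P_\theta(x,\cdot)-P_{\theta'}(x,\cdot)\|_\tv\le 2\int|q_\theta(z)-q_{\theta'}(z)|\,\mu_{Leb}(dz)$ for every $x$, and the right-hand side is bounded by a constant times $|\Sigma-\Sigma'|_\s+|c-c'|$ by the $L^1$-continuity of a non-degenerate centered Gaussian density in its covariance. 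Hence $D(\theta,\theta')\le L(\Theta)\,\big(|\Sigma-\Sigma'|_\s+|c-c'|\big)$ for all $\theta,\theta'\in\Theta$.

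Next I would bound the increments. From (\ref{ex2:defiSigma})--(\ref{ex2:defic}) and the projection in Step~3 of Algorithm~\ref{arwm1}, using compactness of $\Theta_\mu,\Theta_+$ and the identity $|vv^T|_\s=|v|^2$, one has deterministically $|c_n-c_{n-1}|\le n^{-1}$ and $|\Sigma_n-\Sigma_{n-1}|_\s\le n^{-1}\big(|X_n-\mu_{n-1}|^2+|\Sigma_{n-1}|_\s\big)\le n^{-1}\big(2|X_n|^2+C\big)$ for some $C=C(\Theta)$; for the $(l)$-shifted chain the step-size is $(l+n)^{-1}\le n^{-1}$, so the same bounds hold, and therefore $D(\theta_n,\theta_{n-1})\le L\,n^{-1}\big(2|X_n|^2+C+1\big)$. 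Consequently, for each $\epsilon>0$ there are $\epsilon'>0$ and $n_0$ (all depending only on $L,C,\epsilon$) with $\{D(\theta_n,\theta_{n-1})\ge\epsilon\}\subseteq\{|X_n|^2\ge\epsilon' n\}$ for $n\ge n_0$, uniformly in $l$ and in the starting point. Now Lemma~\ref{driftRWM} gives $\sup_\theta P_\theta V_s\le V_s+b$, so conditioning successively and inducting yields $\PE_{x,\theta}^{(l)}[V_s(X_n)]\le V_s(x)+nb$ for all $l,\theta,x$; and D\ref{D2} gives $V_s(x)=1+\pi^{-s}(x)\ge\exp(s\,d_0|x|^m)$ for $|x|\ge R$, so $\{|x|^2\ge t\}\subseteq\{V_s(x)\ge\exp(s\,d_0t^{m/2})\}$ for $t\ge R^2$. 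Combining, for $n$ large,
\[
\PP_{x,\theta}^{(l)}\!\big(|X_n|^2\ge\epsilon' n\big)\le\big(V_s(x)+nb\big)\,\exp\!\big(-s\,d_0(\epsilon' n)^{m/2}\big).
\]
For B\ref{B2}, a level set $\Dset$ of $V=V_s$ is of the form $\{V_s\le d\}$, so the right-hand side is at most $(d+nb)\exp\!\big(-s\,d_0(\epsilon' n)^{m/2}\big)\to0$ uniformly in $l\ge0$, $\theta\in\Theta$ and $x\in\Dset$, which together with the previous display is exactly B\ref{B2}.

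For B\ref{B1}, take $\xi_1,\xi_2$ with $\int|\ln\pi|^{2/m}\,d\xi_1<+\infty$; by D\ref{D2}, $|\ln\pi(x)|^{2/m}\ge d_0^{2/m}|x|^2$ for $|x|\ge R$, so this entails $\int|x|^2\,d\xi_1<+\infty$, and in particular $\xi_1(|x|>K)\to0$ as $K\to+\infty$. I would then split $\PP_{\xi_1,\xi_2}$ according to $\{|X_0|\le K\}$: on its complement the event $\{D(\theta_n,\theta_{n-1})\ge\epsilon\}$ has probability at most $\xi_1(|x|>K)$; on $\{|X_0|\le K\}$ one has $V_s(X_0)\le d_K\eqdef 1+\sup_{|x|\le K}\pi^{-s}(x)<+\infty$ (by D\ref{D1}), so for $n\ge n_0$ the displayed bound gives $\PP_{\xi_1,\xi_2}\big(|X_n|^2\ge\epsilon' n,\ |X_0|\le K\big)\le(d_K+nb)\exp\!\big(-s\,d_0(\epsilon' n)^{m/2}\big)\to0$. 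Hence $\limsup_n\PP_{\xi_1,\xi_2}\big(D(\theta_n,\theta_{n-1})\ge\epsilon\big)\le\xi_1(|x|>K)$ for every $K$, and letting $K\to+\infty$ proves B\ref{B1}.

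The one step that is more than bookkeeping is the uniform Lipschitz bound for $D$: the task is to bound $\sup_x\|P_\theta(x,\cdot)-P_{\theta'}(x,\cdot)\|_\tv$ by a constant times $|\theta-\theta'|$, with the constant independent of $x$. This works precisely because the RWM proposal is a state-independent Gaussian location family whose covariances $\{e^c\Sigma:\theta\in\Theta\}$ remain in a compact, uniformly non-degenerate set, so that the $x$-dependence factors out and everything reduces to the elementary $L^1$-continuity of non-degenerate centered Gaussians in their covariance; the remaining inputs, namely the drift inequality of Lemma~\ref{driftRWM} and the tail exponents of D\ref{D2}, enter exactly as above.
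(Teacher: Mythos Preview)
Your proof is correct and follows the same overall architecture as the paper: first a uniform Lipschitz bound $D(\theta,\theta')\le C(|\Sigma-\Sigma'|_\s+|c-c'|)$ (the paper phrases this as $D(\theta,\theta')\le C|e^c\Sigma-e^{c'}\Sigma'|_\s$, which is equivalent on the compact $\Theta$), then a bound on the adaptation increments by $n^{-1}(1+|X_n|^2)$, and finally control of $|X_n|^2$ through the drift of Lemma~\ref{driftRWM}.

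The two arguments differ in the last step. The paper bounds the \emph{expectation} $\PE^{(l)}_{x,\theta}[|X_n|^2]$: it uses D\ref{D2}(\ref{D2a}) to write $|x|^2\le C\,\phi(V_s(x))$ with $\phi(t)=(\ln t)^{2/m}$, and then Jensen's inequality with the concave $\phi$ together with Proposition~\ref{prop:ComparaisonGal}(\ref{prop:CpG1}) to obtain $\PE^{(l)}_{x,\theta}[|X_n|^2]\le C(\ln V_s(x))^{2/m}(\ln n)^{2/m}$. This gives the sharp $(\ln n)^{2/m}$ growth, and the moment condition $\int|\ln\pi|^{2/m}\,d\xi_1<+\infty$ is exactly what is needed to integrate the $x$-dependence when passing to $\PE_{\xi_1,\xi_2}[D(\theta_n,\theta_{n-1})]\to0$. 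Your route instead applies Markov's inequality directly to $V_s(X_n)$, exploiting the stretched-exponential relation $\{|x|^2\ge t\}\subset\{V_s(x)\ge\exp(sd_0 t^{m/2})\}$; this is more elementary (no concavity argument) and, as your splitting on $\{|X_0|\le K\}$ shows, it in fact yields B\ref{B1} for \emph{every} initial distribution $\xi_1$---the hypothesis $\int|\ln\pi|^{2/m}\,d\xi_1<+\infty$ that you invoke to get $\int|x|^2\,d\xi_1<+\infty$ is never actually used, since tightness of $\xi_1$ alone gives $\xi_1(|x|>K)\to0$. So your argument is slightly stronger and simpler, at the cost of not producing the quantitative moment bound on $|X_n|^2$ that the paper's approach yields.
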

The proof of Lemmas~\ref{driftRWM} and \ref{ex:lem:HypB} are in Appendix C.

\begin{prop}
\label{prop:ex2:CasRapide}
  Assume D\ref{D1}-\ref{D3}. Consider the sequence $\{X_n, n \geq 0 \}$ given by the
  algorithm \ref{arwm1}.
  \begin{enumerate}[(i)]
  \item For any probability measures $\xi_1,\xi_2$ such that $\int |\ln
    \pi|^{2/m} d \xi_1 < +\infty$,
\[
\sup_{\{f, |f|_1 \leq 1 \}} |\PE_{\xi_1,\xi_2}[f(X_n)] - \pi(f)| \to 0 \eqsp.
\]
\item \label{item2} There exists $0 < s \leq s_\star$ such that for any
  probability measures $\xi_1,\xi_2$ such that $\int |\pi|^{-s} d \xi_1 <
  +\infty$, and any function $ f \in \L_{1+\pi^{-r}}$, $0 \leq r<s$,
\[
n^{-1} \sum_{k=1}^n f(X_k ) \to \pi(f) \eqsp, \qquad \PP_{\xi_1,\xi_2}-\as
\]
  \end{enumerate}
\end{prop}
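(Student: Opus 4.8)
The plan is to deduce both statements from the general theory by verifying the hypotheses of the relevant theorems and then unwinding the definitions of $V_s$, $\xi_1$, $\Cset$, etc. The key point is that all the hard work has been localized into Lemmas~\ref{driftRWM} and \ref{ex:lem:HypB}, so the proof of the Proposition is essentially a matter of bookkeeping. First I would fix the value of $s \in (0, s_\star]$ furnished by Lemma~\ref{driftRWM}, so that with $V_s(x) = 1 + \pi^{-s}(x)$ and a suitable $\alpha \in (0,1)$ and compact $\Cset$ we have the polynomial drift $\sup_\theta P_\theta V_s \leq V_s - c V_s^{1-\alpha} + b\un_\Cset$. By that same lemma, assumptions A\ref{Adrift} and A\ref{A2}--\ref{A6} hold; by Lemma~\ref{ex:lem:HypB}, B\ref{B1} holds for any $\xi_1$ with $\int |\ln\pi|^{2/m}\,d\xi_1 < \infty$ and B\ref{B2} holds.

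For part~(i), I would apply Corollary~\ref{coro:MarginalUnifCase}: A\ref{Adrift} together with B\ref{B1} (valid for the stated $\xi_1, \xi_2$, since D\ref{D1}--\ref{D3} give the moment condition $\int|\ln\pi|^{2/m}\,d\xi_1<\infty$ via Lemma~\ref{ex:lem:HypB}) immediately yields $\sup_{\{f,|f|_1\leq1\}}|\PE_{\xi_1,\xi_2}[f(X_n)] - \pi(f)| \to 0$. No further argument is needed.

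For part~(ii), I would apply Theorem~\ref{theo:SLLNUnboundedUnifCase} with the function $V = V_s$. Assumptions A\ref{A2}, A\ref{A5}, A\ref{A6} hold by Lemma~\ref{driftRWM} (with the exponent $\beta$ in A\ref{A6} being some $\beta \in [0, 1-\alpha)$), and B\ref{B2} holds by Lemma~\ref{ex:lem:HypB}. The theorem then gives, for every $f \in \L_{V_s^\beta}$ and every initial law with $\xi_1(V_s) < \infty$, the SLLN $n^{-1}\sum_{k=1}^n f(X_k) \to \pi(f)$ a.s. It remains to translate the abstract norm conditions into the concrete statement: the condition $\xi_1(V_s) < \infty$ is exactly $\int (1 + \pi^{-s})\,d\xi_1 < \infty$, i.e. $\int \pi^{-s}\,d\xi_1 < \infty$; and since $V_s^\beta = (1 + \pi^{-s})^\beta$ is comparable (up to constants) to $1 + \pi^{-s\beta}$, a function $f$ with $\sup_\Xset |f|/(1+\pi^{-r}) < \infty$ lies in $\L_{V_s^\beta}$ whenever $r \leq s\beta$; choosing $\alpha$ close to $0$ if necessary so that $\beta$ can be taken arbitrarily close to $1$, this covers all $r < s$. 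Finally, since $\int\pi^{-s}\,d\xi_1<\infty$ forces $\int|\ln\pi|^{2/m}\,d\xi_1<\infty$ (because $\pi^{-s}$ dominates any power of $|\ln\pi|$ on the tails where $\pi\to0$, and $\pi$ is bounded on compacts by D\ref{D1}), the hypothesis of part~(i) is automatically satisfied under the hypothesis of part~(ii), so the two statements are consistent.

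The only mildly delicate point — and the place where I expect to spend a line or two of care rather than invoke a black box — is the comparison $V_s^\beta \asymp 1 + \pi^{-s\beta}$ and the choice of $\alpha$ ensuring $s\beta$ can be pushed past any prescribed $r < s$; this is elementary but needs the explicit form of $V_s$ and the freedom in $\alpha$ granted by Lemma~\ref{driftRWM}. Everything else is a direct citation of Theorems~\ref{theo:MarginalUnifCase} and \ref{theo:SLLNUnboundedUnifCase}, Corollary~\ref{coro:MarginalUnifCase}, and Lemmas~\ref{driftRWM}--\ref{ex:lem:HypB}.
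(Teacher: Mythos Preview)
Your proposal is correct and matches the paper's approach: the paper does not give a separate proof of Proposition~\ref{prop:ex2:CasRapide}, treating it as an immediate consequence of Corollary~\ref{coro:MarginalUnifCase} and Theorem~\ref{theo:SLLNUnboundedUnifCase} once Lemmas~\ref{driftRWM} and~\ref{ex:lem:HypB} have verified A\ref{Adrift}--A\ref{A6} and B\ref{B1}--B\ref{B2}. Your translation of the abstract conditions into the concrete ones on $\xi_1$ and $f$ (via $V_s = 1+\pi^{-s}$, the comparability $V_s^\beta \asymp 1+\pi^{-s\beta}$, and the freedom in $\alpha$ granted by Lemma~\ref{driftRWM}) is exactly the intended bookkeeping.
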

The drift function $V_s$ exhibited in Lemma 3.3. is designed for limit theorems
relative to functions $f$ increasing as $\exp(\beta |x|^m)$. This implies a
condition on the initial distribution $\xi_1$ which has to possess
sub-exponential moments (see
Proposition~\ref{prop:ex2:CasRapide}(\ref{item2})), which always holds with
$\xi_1 = \delta_x$, $ x \in \Xset$.

\subsubsection{Law of large numbers for polynomially increasing functions}
Proposition~\ref{prop:ex2:CasRapide} also addresses the case when $f$ is of the
form $1+|x|^r$, $r>0$. Nevertheless, the conditions on $\xi_1$ and the
assumptions D\ref{D3} can be weakened in that case.

We have to find a drift function $V$ such that $V^{1-\alpha}(x) \sim
1+|x|^{r+\iota}$ for some $\alpha \in (0,1)$, $\iota>0$.  Under D\ref{D3}, this
can be obtained from the proof of Lemma 3.3. and this yields $V(x) \sim 1 +
|x|^{r+\iota+2-m}$ (apply the Jensen's inequality to the drift inequality
(\ref{eq:drift:sous-geom}) with the concave function $\phi(t) \sim [\ln
t]^{(r+\iota+2)/m-1}$; see \cite[Lemma 3.5]{jarneretroberts02} for similar
calculations). Hence, the condition on $\xi_1$ gets into
$\xi_1(|x|^{r+\iota+2-m})< +\infty$ for some $\iota>0$.

Drift inequalities with $V \sim (-\ln \pi)^{s}$ for some $s>2/m-1$, can also be
derived by direct computations: in that case, D\ref{D3} can be removed.
Details are omitted and left to the interested reader.

To conclude, observe that these discussions relative to polynomially increasing
functions can be extended to any function $f$ which is a concave transformation
of $\pi^{-s}$.

%%-------------------------------
%%
\section{Proofs of the results of Section~\ref{sec:ResultsUnif}}
\label{sec:Proofs}
%%
%%-------------------------------
For a set $\Cset \in \Xsigma$, define the hitting-time on $\Cset \times \Theta$
of $\{Z_n, n\geq 0 \}$ by $\sigma_\Cset \eqdef \inf\{n \geq 0, Z_n \in \Cset
\times \Theta \}$. If $\pi(|f|) < +\infty$, we set $\bar f \eqdef f - \pi(f)$.
\subsection{Preliminary results}
We gather some useful preliminary results in this section. Section
\ref{sec:OptCouplingUnif} gives an approximation of the marginal distribution
of the adaptive chain by the distribution of a related Markov chain. In Section
\ref{sec:ModMomentsUnif}, we develop various bounds for modulated moments of
the adaptive chain as consequences of the drift conditions. In Section
\ref{sec:ReturnTimesUnif} we bound the expected return times of the adaptive
chain to level sets of the drift function $V$. The culminating result of this
subsection is Theorem~\ref{theo:controleG} which gives an explicit bound on the
resolvent function $g^{(l)}_a(x,\theta)$.

\subsubsection{Optimal coupling}\label{sec:OptCouplingUnif}
\begin{lemma}
  \label{lem:coupling}
  For any integers $l \geq 0, N \geq 2$, any measurable bounded function $f$ on
  $\Xset^N$ and any $(x,\theta) \in \Xset \times \Theta$,
  \begin{multline*}
    \Delta \eqdef \left| \PE_{x, \theta}^{(l)}\left[ f(X_1, \cdots, X_N)
      \right] - \int_{\Xset^N} P_{\theta}(x, dx_1) \; \prod_{k=2}^N
      P_{\theta}(x_{k-1}, dx_k) f(x_1, \cdots, x_n)\right| \\ \leq |f|_1 \;
    \sum_{j=1}^{N-1} \sum_{i=1}^j \PE_{x,\theta}^{(l)} \left[D(\theta_i,
      \theta_{i-1}) \right] \eqsp.
  \end{multline*}
\end{lemma}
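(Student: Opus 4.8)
The plan is to build, on a common probability space, a coupling of two processes: the adaptive chain $\{X_k\}_{1\le k\le N}$ started from $(x,\theta)$ under $\PP^{(l)}_{x,\theta}$, and a ``frozen'' Markov chain $\{\tilde X_k\}_{1\le k\le N}$ that uses the fixed kernel $P_\theta$ at every step. I would construct this coupling step by step, at each stage using an optimal (maximal) coupling of the two one-step transition laws, so that the probability of a discrepancy at step $k$ is exactly the total variation distance between the relevant conditional laws. The point is that the adaptive chain, at step $k$, moves according to $P_{\theta_{k-1}}(X_{k-1},\cdot)$, whereas the frozen chain moves according to $P_\theta(\tilde X_{k-1},\cdot)$; once the two chains have agreed up to time $k-1$ (so $X_{k-1}=\tilde X_{k-1}$), the only source of discrepancy is the mismatch between $\theta_{k-1}$ and $\theta$, which is controlled by $D(\theta_{k-1},\theta)\le \sum_{i=1}^{k-1}D(\theta_i,\theta_{i-1})$ by the triangle-type telescoping (using $\theta_0=\theta$ under $\PP^{(l)}_{x,\theta}$).

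Concretely, I would let $T\eqdef\inf\{k\ge1: X_k\ne \tilde X_k\}$ be the first disagreement time of the coupling, and observe that on $\{T>N-1\}$ we have $(X_1,\dots,X_N)=(\tilde X_1,\dots,\tilde X_N)$ except possibly for the last coordinate — actually I should be slightly careful here: I would arrange the coupling so that $X_1=\tilde X_1$ fails with probability $0$ (both are drawn from $P_\theta(x,\cdot)$), and inductively, given agreement up to time $k$, couple the step $k\to k+1$ maximally, so that $\PP(X_{k+1}\ne\tilde X_{k+1}\mid \F_k)\le D(\theta_k,\theta)$ on the event of agreement up to $k$. Then $\Delta=|\PE[f(X_{1:N})-f(\tilde X_{1:N})]|\le |f|_1\,\PP(T\le N)=|f|_1\,\PP(\exists\,k\in\{2,\dots,N\}: X_k\ne\tilde X_k)$. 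A union bound over $k$ together with the conditional estimate gives
\[
\PP(T\le N)\le \sum_{k=2}^{N}\PE\!\left[D(\theta_{k-1},\theta)\right]\le \sum_{k=2}^{N}\PE\!\left[\sum_{i=1}^{k-1}D(\theta_i,\theta_{i-1})\right]=\sum_{j=1}^{N-1}\sum_{i=1}^{j}\PE^{(l)}_{x,\theta}\!\left[D(\theta_i,\theta_{i-1})\right],
\]
which is the claimed bound. (Re-indexing $j=k-1$ gives exactly the double sum in the statement.)

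The main obstacle is making the coupling construction rigorous while keeping track of the adaptation variable: the maximal coupling of $P_{\theta_k}(X_k,\cdot)$ and $P_\theta(\tilde X_k,\cdot)$ must be performed measurably and jointly with the update of $\theta_k\to\theta_{k+1}$, which depends on $X_{k+1}$ through $\bar P$. I would handle this by invoking the standard existence of a measurable maximal coupling kernel for two kernels depending measurably on parameters (e.g.\ via the splitting/common-randomness representation), and by noting that once $X_{k+1}$ is generated, $\theta_{k+1}$ is obtained by applying the (measurable) adaptation mechanism of $\bar P$, which does not affect the $X$-marginal we are comparing. An alternative, perhaps cleaner, route avoiding an explicit pathwise coupling is an induction on $N$: write $\Delta$ as a telescoping sum over the first point where the two chains' transition kernels are applied, bounding each increment by $\|P_{\theta_1}(\cdot,\cdot)-P_\theta(\cdot,\cdot)\|$-type terms via the definition of $D$ and the tower property, and then close the induction. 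Either way the combinatorial heart is the telescoping $D(\theta_k,\theta)\le\sum_{i\le k}D(\theta_i,\theta_{i-1})$ and the double summation; the analytic heart is just $|\PE[f(X_{1:N})]-\PE[f(\tilde X_{1:N})]|\le |f|_1\,\PP(\text{chains differ})$.
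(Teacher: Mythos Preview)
Your coupling argument is correct: building a maximal coupling step by step, noting that the first step never fails (both draw from $P_\theta(x,\cdot)$), and bounding $\PP(T\le N)$ via the union bound and the telescoping $D(\theta_{k-1},\theta_0)\le\sum_{i\le k-1}D(\theta_i,\theta_{i-1})$ gives exactly the stated double sum. The measurability issue you flag is real but can be handled as you say.

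However, the paper takes the \emph{alternative} route you mention at the end and does \emph{not} build a pathwise coupling here. It writes $\Delta$ as a telescoping sum over $j=1,\dots,N-1$, where in the $j$-th term the path law uses the adaptive kernels $\bar P_l$ for the first $j$ steps and the frozen kernel $P_\theta$ thereafter; the $j$-th increment is then bounded by $\PE^{(l)}_{x,\theta}[D(\theta_j,\theta_0)]$ using only the definition of $D$ and the marginal property \eqref{eq:tk1}, and a second telescoping $D(\theta_j,\theta_0)\le\sum_{i\le j}D(\theta_i,\theta_{i-1})$ finishes. This purely measure-theoretic argument sidesteps the measurable-coupling construction entirely; the actual coupling statement (your main approach) appears only afterwards, in Proposition~\ref{prop:ContructionCouplingOpt}, as a consequence of Lemma~\ref{lem:coupling} combined with the abstract maximal-coupling Lemma~\ref{lem:couplingoptimal}. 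So your argument effectively merges Lemma~\ref{lem:coupling} and Proposition~\ref{prop:ContructionCouplingOpt} into one step, trading a clean integral computation for an explicit probabilistic construction.
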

\begin{proof}
  We can assume w.l.g. that $|f|_{1} \leq 1$.  Set $z_k = (x_k,t_k)$.  With the
  convention that $\prod_{k=a}^b a_k=1$ for $a>b$ and upon noting that $
  \int_\Xset P_{\theta}(x, dx') h(x') = \int_{\Xset \times \Theta} \bar
  P_{l}(0; (x,\theta), (dx', d \theta'))h(x')$ for any bounded measurable function $h:\;\Xset\to\rset$,
\begin{multline*}
  \Delta = \left| \int_{(\Xset \times \Theta)^N} \sum_{j=1}^{N-1} \bar P_{l}(0;
    (x,\theta), dz_1) \;
    \prod_{k=2}^{j} \bar P_{l}(k-1; z_{k-1}, d z_{k})  \cdots  \right. \\
  \left.  \left\{ \bar P_{l}(j; z_j, dz_{j+1}) - \bar P_{l}(0; (x_j,\theta),
      dz_{j+1}) \right\}
    \prod_{k=j+2}^N   \bar P_{l}(0; (x_{k-1},\theta), dz_{k})  f(x_1, \cdots, x_N) \right| \\
  \leq \sum_{j=1}^{N-1} \int_{\Xset^j} \bar P_{l}(0; (x,\theta), dz_1) \;
  \prod_{k=2}^{j} \bar P_{l}(k-1; z_{k-1}, d z_{k}) \sup_{x\in \Xset} \|
  P_{t_j}( x, \cdot) - P_{\theta}(x, \cdot) \|_{\tv}
\end{multline*}
where we used that
\[
\int_{(\Xset \times \Theta)^{N-j-1}} \prod_{k=j+2}^N \bar P_{l}(0;
(x_{k-1},\theta), dz_{k}) f(x_1, \cdots, x_N)
\]
is bounded by a function $\Xi(x_{1}, \cdots, x_{j+1})$ that does not depend
upon $t_k, k\leq N$ and for any bounded function $\Xi$ on $\Xset^{j+1}$
\begin{multline*}
  \int_{\Xset \times \Theta} \left\{ \bar P_{l}(j; z_j, dz_{j+1}) - \bar
    P_{l}(0;
    (x_j,\theta), dz_{j+1}) \right\} \Xi(x_1, \cdots, x_{j+1}) \\
  = \int_{\Xset} \left\{ P_{t_j}( x_j, dx_{j+1}) - P_{\theta}( x_j, dx_{j+1})
  \right\} \Xi(x_1, \cdots, x_{j+1}) \leq \sup_{x\in \Xset} \| P_{t_j}( x,
  \cdot) - P_{\theta}(x, \cdot) \|_{\tv} \ |\Xi|_1\eqsp.
\end{multline*}
Hence
\begin{multline*}
  \Delta \leq \sum_{j=1}^{N-1} \PE_{x,\theta}^{(l)} \left[ \sup_{x\in \Xset} \|
    P_{\theta_j}(x, \cdot) - P_{\theta_0}(x, \cdot)
    \|_{\tv} \right]   \\
  \leq \sum_{j=1}^{N-1} \PE_{x,\theta}^{(l)} \left[ \sum_{i=1}^j \sup_{x\in
      \Xset} \| P_{\theta_i}(x, \cdot) - P_{\theta_{i-1}}(x, \cdot) \|_{\tv}
  \right] = \sum_{j=1}^{N-1} \sum_{i=1}^j \PE_{x,\theta}^{(l)}
  \left[D(\theta_i, \theta_{i-1}) \right] \eqsp.
\end{multline*}
\end{proof}

\begin{lemma}
\label{lem:couplingoptimal}
Let $\mu, \nu$ be two probability distributions. There exist a probability
space $(\Omega, \F, \PP)$ and random variables $X,Y$ on $(\Omega, \F)$ such
that $X \sim \mu$, $Y \sim \nu$ and $\PP(X = Y) = 1 - \| \mu - \nu \|_\tv$.
\end{lemma}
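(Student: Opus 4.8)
This is the classical maximal (Doeblin) coupling, and the plan is to construct $(X,Y)$ by explicitly separating the common mass of $\mu$ and $\nu$ from their residual parts. First I would fix a common dominating $\sigma$-finite measure, e.g.\ $\lambda \eqdef \mu+\nu$, and write $\mu = f\,\lambda$, $\nu = g\,\lambda$ with Radon--Nikodym densities $f,g\ge 0$. Set $p \eqdef \int_\Xset (f\wedge g)\,d\lambda$; since $\int f\,d\lambda = \int g\,d\lambda = 1$ we get $\int |f-g|\,d\lambda = 2(1-p)$, so $p$ is exactly the target coincidence probability $1-\|\mu-\nu\|_\tv$ (with the total variation distance normalised to take values in $[0,1]$).

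Before the main construction I would dispose of the two degenerate cases. If $p=1$ then $f=g$ $\lambda$-a.e., hence $\mu=\nu$, and one takes $\Omega=\Xset$, $\PP=\mu$ and $X=Y=\mathrm{id}$. If $p=0$ then $\mu$ and $\nu$ are mutually singular, and one takes $\Omega=\Xset\times\Xset$, $\PP=\mu\otimes\nu$, with $X,Y$ the coordinate projections: any set carrying the full mass of $\mu$ is $\nu$-null, which forces $\PP(X=Y)=0=1-\|\mu-\nu\|_\tv$.

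For $0<p<1$ I would introduce the three probability measures on $(\Xset,\Xsigma)$
\[
\alpha \eqdef \tfrac1p\,(f\wedge g)\,\lambda, \qquad \beta \eqdef \tfrac1{1-p}\,\bigl(f-f\wedge g\bigr)\,\lambda, \qquad \gamma \eqdef \tfrac1{1-p}\,\bigl(g-f\wedge g\bigr)\,\lambda,
\]
each of total mass $1$ by the definition of $p$, and build the coupling on $\Omega=\{0,1\}\times\Xset\times\Xset$ where, under $\PP$, the first coordinate $B$ is Bernoulli$(p)$ and, conditionally on $B$, the pair $(X,Y)$ formed by the last two coordinates is distributed as $(\xi,\xi)$ with $\xi\sim\alpha$ on $\{B=1\}$, and as a pair of \emph{independent} variables of laws $\beta,\gamma$ on $\{B=0\}$. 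Then the law of $X$ is $p\,\alpha+(1-p)\,\beta=(f\wedge g)\,\lambda+(f-f\wedge g)\,\lambda=f\,\lambda=\mu$, and symmetrically $Y\sim\nu$; moreover $\PP(X=Y)\ge\PP(B=1)=p$ since $X=Y$ on $\{B=1\}$ by construction.

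The one point that genuinely requires an argument --- and the place to be careful --- is the matching bound $\PP(X=Y,\,B=0)=0$. It holds because $f-f\wedge g=(f-g)^+$ is supported on $\{f>g\}$ while $g-f\wedge g=(g-f)^+$ is supported on $\{g>f\}$, so $\beta$ and $\gamma$ are mutually singular; the independent pair $(X,Y)$ on $\{B=0\}$ thus coincides with probability $0$, exactly as in the $p=0$ case. Combining, $\PP(X=Y)=p=1-\|\mu-\nu\|_\tv$. The remaining items --- measurability of $f\wedge g$ and of the conditional laws, and legitimacy of the mixture/product construction ($\Xsigma$ being countably generated) --- are routine, and I do not anticipate a genuine obstacle.
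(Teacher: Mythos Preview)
Your construction is the standard maximal (Doeblin) coupling and is correct; the paper itself does not prove this lemma at all but simply refers to \cite[Proposition~3]{roberts:rosenthal:2004}, so there is nothing to compare at the level of argument. Your explicit splitting into the common part $\alpha$ and the singular residuals $\beta,\gamma$, together with the observation that $\beta\perp\gamma$ forces $\PP(X=Y\mid B=0)=0$, is exactly the usual route and is complete as stated.

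One small remark worth keeping: you were right to flag the normalisation. With the paper's own convention $\|\mu-\nu\|_{\tv}=\sup_{|f|_1\le 1}|\mu(f)-\nu(f)|=2\sup_A|\mu(A)-\nu(A)|$, the identity $\PP(X=Y)=1-\|\mu-\nu\|_{\tv}$ is literally false (it would give $2p-1$), so the lemma must be read with the ``half'' total variation distance $\sup_A|\mu(A)-\nu(A)|=1-p$, as you do. This is a harmless inconsistency in the paper, and the way the lemma is used downstream (Proposition~\ref{prop:ContructionCouplingOpt}) is consistent with your reading.
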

The proof can be found e.g. in \cite[Proposition 3]{roberts:rosenthal:2004}.
As a consequence of Lemmas~\ref{lem:coupling} and \ref{lem:couplingoptimal}, we
have
\begin{prop}
  \label{prop:ContructionCouplingOpt}
  Let $l \geq 0, N \geq 2$ and set $z = (x,\theta)$. There exists a process
  $\{(X_k, \tilde X_k), 0 \leq k \leq N\}$ defined on a probability space
  endowed with the probability $\bPP_{z, z}^{(l)}$ such that
\[
\bPP_{z, z}^{(l)} \left( X_k = \tilde X_k, 0 \leq k \leq N \right) \geq 1 -
\sum_{j=1}^{N-1} \sum_{i=1}^j\PE_{z}^{(l)} \left[ D(\theta_i,\theta_{i-1})
\right] \eqsp,
\]
$(X_0, \cdots, X_{N})$ has the $X$-marginal distribution of $\PP^{(l)}_{z}$
restricted to the time-interval $\{0, \cdots, N\}$, and $(\tilde X_0, \cdots,
\tilde X_{N})$ has the same distribution as a homogeneous Markov chain with
transition kernel $P_{\theta}$ and initial distribution $\delta_x$.
\end{prop}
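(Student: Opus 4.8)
The plan is to combine the two preceding lemmas by building a suitable coupling explicitly, iterating the optimal-coupling construction of Lemma~\ref{lem:couplingoptimal} at each time step while keeping track of the ``disagreement'' probability via the bound of Lemma~\ref{lem:coupling}. First I would set up the target joint law: on the canonical space of $\{Z_k = (X_k,\theta_k), 0 \le k \le N\}$ under $\PP^{(l)}_z$ we already have the adaptive chain; I want to adjoin a process $\{\tilde X_k, 0 \le k \le N\}$ that is a homogeneous Markov chain with kernel $P_\theta$ started at $\delta_x$, and such that the two agree with high probability. The natural way is to define the joint law step by step: at step $k$, given the past, the conditional law of $X_{k+1}$ is $P_{\theta_k}(X_k,\cdot)$ and the conditional law of $\tilde X_{k+1}$ is $P_\theta(\tilde X_k,\cdot)$; on the event $\{X_k = \tilde X_k\}$ use the maximal coupling of Lemma~\ref{lem:couplingoptimal} for the pair $\big(P_{\theta_k}(X_k,\cdot), P_\theta(X_k,\cdot)\big)$ so that they coincide with probability $1 - \|P_{\theta_k}(X_k,\cdot) - P_\theta(X_k,\cdot)\|_\tv \ge 1 - D(\theta_k,\theta)$; on the complementary event let them evolve independently (or in any measurable fashion). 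One must check this prescription is a legitimate transition kernel on $(\Xset\times\Theta)\times\Xset$ — measurability of the maximal coupling in the parameters is where some care is needed, but it follows from the countable generation of $\Xsigma$ and a standard measurable-selection argument, or one can simply invoke that $\PP^{(l)}_z$ and the Markov chain law have a common maximal coupling on path space directly from Lemma~\ref{lem:couplingoptimal} applied to the two $N$-step laws on $\Xset^N$.

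Actually the cleanest route avoids the step-by-step construction entirely: apply Lemma~\ref{lem:couplingoptimal} directly to $\mu$ = the $X$-marginal of $\PP^{(l)}_z$ on $\Xset^{N+1}$ (i.e.\ the law of $(X_0,\dots,X_N)$) and $\nu$ = the law of $(\tilde X_0,\dots,\tilde X_N)$ for the homogeneous $P_\theta$-chain started at $\delta_x$, both viewed as probability measures on the Polish-type space $\Xset^{N+1}$. This yields a probability space carrying $(X_0,\dots,X_N) \sim \mu$ and $(\tilde X_0,\dots,\tilde X_N)\sim\nu$ with
\[
\bPP^{(l)}_{z,z}\big((X_0,\dots,X_N) = (\tilde X_0,\dots,\tilde X_N)\big) = 1 - \|\mu - \nu\|_\tv .
\]
It then remains to bound $\|\mu-\nu\|_\tv$. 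This is exactly what Lemma~\ref{lem:coupling} does: taking $f$ in that lemma to range over bounded functions on $\Xset^N$ with $|f|_1 \le 1$ (after a harmless index shift to match $X_1,\dots,X_N$ versus $X_0,\dots,X_N$, noting $X_0 = \tilde X_0 = x$ is deterministic), the quantity $\Delta$ there is precisely $\sup_{|f|_1\le 1}|\mu(f) - \nu(f)| = \|\mu-\nu\|_\tv$, and the lemma bounds it by $\sum_{j=1}^{N-1}\sum_{i=1}^{j} \PE^{(l)}_z[D(\theta_i,\theta_{i-1})]$. Combining the two displays gives the asserted lower bound on the agreement probability.

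The main obstacle is a bookkeeping/measurability subtlety rather than a deep difficulty: one must make sure the ``same distribution as a homogeneous Markov chain with transition kernel $P_\theta$ and initial distribution $\delta_x$'' clause is respected — this is automatic once $\nu$ is defined as that chain's law and Lemma~\ref{lem:couplingoptimal} is invoked with that $\nu$ — and one must reconcile the event $\{X_k = \tilde X_k,\ 0 \le k \le N\}$ (coordinatewise equality of paths) with the total-variation distance between the two path laws on $\Xset^{N+1}$. The identification $\Delta = \|\mu-\nu\|_\tv$ requires only that $\mu$ and $\nu$ agree on $X_0$ (both put mass $1$ on $\{x\}$), so that the sup over $|f|_1\le1$ on $\Xset^N$ in Lemma~\ref{lem:coupling} equals the sup over $|f|_1\le 1$ on $\Xset^{N+1}$; this is immediate. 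I would also remark that one does not need $\Xset$ to be Polish for Lemma~\ref{lem:couplingoptimal} — the cited maximal-coupling construction (\cite{roberts:rosenthal:2004}) works on an arbitrary measurable space — so no topological hypotheses enter. With these points noted the proof is a two-line assembly of Lemmas~\ref{lem:coupling} and~\ref{lem:couplingoptimal}.
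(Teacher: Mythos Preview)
Your proposal is correct and follows exactly the route the paper intends: the paper states the proposition simply ``as a consequence of Lemmas~\ref{lem:coupling} and~\ref{lem:couplingoptimal}'' without further detail, and your second (path-space) argument---apply the maximal coupling of Lemma~\ref{lem:couplingoptimal} to the two laws on $\Xset^{N+1}$ and bound their total-variation distance by taking the supremum over $|f|_1\le 1$ in Lemma~\ref{lem:coupling}---is precisely the intended assembly. Your observation that $X_0=\tilde X_0=x$ is deterministic, so that the supremum over functions on $\Xset^N$ coincides with that on $\Xset^{N+1}$, is the only bookkeeping point needed.
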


\subsubsection{Modulated moments for the adaptive chain}\label{sec:ModMomentsUnif}

Let $V: \Xset \to [1, +\infty)$ be a measurable function and assume that there
exist $\Cset \in \Xsigma$, positive constants $b,c$ and $0 < \alpha \leq 1$
such that for any $\theta \in \Theta$,
\begin{equation}
  \label{eq:A2-A5}
  P_\theta V \leq V - c V^{1-\alpha} +b \un_\Cset \eqsp.
\end{equation}

\begin{lemma}
\label{lem:JarnerRoberts}
Assume (\ref{eq:A2-A5}). There exists $\bar b$ such that for any $0 \leq \beta
\leq 1$, $\theta \in \Theta$: $P_\theta V^\beta \leq V^\beta - \beta c
V^{\beta-\alpha} + \bar b \un_\Cset$.
\end{lemma}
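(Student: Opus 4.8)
The plan is to prove the inequality $P_\theta V^\beta \leq V^\beta - \beta c V^{\beta-\alpha} + \bar b \un_\Cset$ by exploiting the concavity of the map $t \mapsto t^\beta$ on $[1,\infty)$ for $\beta \in [0,1]$, so that a single application of Jensen's inequality transfers the drift inequality (\ref{eq:A2-A5}) for $V$ to one for $V^\beta$. The only place this needs care is on $\Cset$, where the correction term $b\un_\Cset$ makes the bound on $P_\theta V$ not obviously less than $V$; there one absorbs everything into a constant $\bar b$ using the uniform-in-$\theta$ boundedness that is built into (\ref{eq:A2-A5}).

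First I would handle $x \in \Cset^c$. On this set (\ref{eq:A2-A5}) gives $P_\theta V(x) \leq V(x) - c V^{1-\alpha}(x) \leq V(x)$. By concavity of $u \mapsto u^\beta$ and Jensen's inequality applied to the probability measure $P_\theta(x,\cdot)$,
\[
P_\theta V^\beta(x) \leq \left( P_\theta V(x) \right)^\beta \leq \left( V(x) - c V^{1-\alpha}(x) \right)^\beta = V^\beta(x)\left(1 - c V^{-\alpha}(x)\right)^\beta \eqsp.
\]
Now use the elementary bound $(1-u)^\beta \leq 1 - \beta u$ valid for $u \in [0,1]$ and $\beta \in [0,1]$ (with $u = c V^{-\alpha}(x) \in [0,1]$, which we may assume by enlarging $\Cset$ to contain $\{V \leq (c \vee 1)^{1/\alpha}\}$, or simply noting $cV^{-\alpha}\le c$ and adjusting constants if $c>1$). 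This yields $P_\theta V^\beta(x) \leq V^\beta(x) - \beta c V^{\beta-\alpha}(x)$, which is the claimed inequality on $\Cset^c$ with no need for the $\bar b$ term.

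Next I would handle $x \in \Cset$. From (\ref{eq:A2-A5}), $P_\theta V(x) \leq V(x) + b$, and by the same Jensen step $P_\theta V^\beta(x) \leq (P_\theta V(x))^\beta \leq (V(x)+b)^\beta$. It remains to bound $(V(x)+b)^\beta - V^\beta(x) + \beta c V^{\beta-\alpha}(x)$ by a constant $\bar b$ uniformly over $x \in \Cset$ and $\theta \in \Theta$. The second of the standing requirements in (\ref{eq:A2-A5})-type hypotheses (here implicitly $\sup_{\Cset \times \Theta}\{P_\theta V + V\} < \infty$, as in A\ref{A-VCset}(\ref{A3rev}), or more simply the fact that $V^{1-\alpha} \leq V$ together with $P_\theta V \le V - cV^{1-\alpha}+b$ forces $V$ bounded on $\Cset$ only if $\Cset$ is a sublevel set — but in any case $\beta \le 1$ makes $V^{\beta-\alpha} \le V^{1-\alpha}$ on $\{V\ge1\}$, and $(V+b)^\beta \le V^\beta + b$ since $\beta\le1$) gives $(V(x)+b)^\beta - V^\beta(x) \le b$, hence $P_\theta V^\beta(x) \le V^\beta(x) + b + \beta c V^{\beta-\alpha}(x)$. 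Adding and subtracting $\beta c V^{\beta-\alpha}(x)$, the target inequality holds on $\Cset$ with $\bar b \eqdef b + \sup_{x\in\Cset} 2\beta c V^{\beta-\alpha}(x) \le b + 2c\,\sup_\Cset V^{1-\alpha}$, which is finite.

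The main obstacle is purely bookkeeping: ensuring that the constant $\bar b$ can be chosen independently of $\beta \in [0,1]$ and $\theta \in \Theta$. Independence of $\theta$ is immediate since every constant in (\ref{eq:A2-A5}) is already uniform in $\theta$; independence of $\beta$ follows because $\beta c \le c$ and $V^{\beta-\alpha} \le V^{1-\alpha} \le V$ on $\{V \geq 1\}$, so the crude choice $\bar b = b + 2c\sup_\Cset V^{1-\alpha}$ (finite because $\Cset$ is, in the applications, a sublevel set of $V$ on which $V$ is bounded, or because $P_\theta V + V$ is assumed bounded on $\Cset \times \Theta$) works for all $\beta$ simultaneously. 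I would close by remarking that the $(1-u)^\beta \le 1 - \beta u$ inequality is the one genuinely needed ingredient and follows from concavity of $u \mapsto u^\beta$ at the point $u=1$, i.e. $u^\beta \le 1 + \beta(u-1)$, applied with $u \mapsto 1-u$.
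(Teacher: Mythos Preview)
Your approach is correct and is precisely the concavity/Jensen argument behind the reference \cite{jarneretroberts02} that the paper cites. Two small technical points on the $x\in\Cset$ part, though. First, the inequality $(V+b)^\beta\le V^\beta+b$ that you invoke is $(V+b)^\beta-V^\beta\le b^\beta$, which is $\le b$ only when $b\ge1$. Second, and more importantly, your final constant $\bar b=b+2c\sup_\Cset V^{1-\alpha}$ requires $\sup_\Cset V<\infty$, which is \emph{not} part of hypothesis (\ref{eq:A2-A5}); you hedge by appealing to A\ref{A-VCset}(\ref{A3rev}) or to $\Cset$ being a sublevel set, but the lemma is stated under (\ref{eq:A2-A5}) alone.

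Both issues vanish if you use the tangent-line form of concavity in one stroke rather than splitting into $\Cset$ and $\Cset^c$. For $\beta\in(0,1]$ and $u,v>0$ one has $u^\beta\le v^\beta+\beta v^{\beta-1}(u-v)$. After Jensen, apply this with $v=V(x)$ and $u=V(x)-cV^{1-\alpha}(x)+b\un_\Cset(x)$ (positive, since it dominates $P_\theta V(x)\ge1$) to get
\[
P_\theta V^\beta \le V^\beta -\beta c V^{\beta-\alpha} + \beta b\,V^{\beta-1}\un_\Cset \le V^\beta -\beta c V^{\beta-\alpha} + b\,\un_\Cset,
\]
using $V^{\beta-1}\le1$ and $\beta\le1$. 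This yields $\bar b=b$, uniform in $\beta$ and $\theta$, with no case analysis and no boundedness of $V$ on $\Cset$.
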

\begin{proof}
  See \cite[Lemma 3.5]{jarneretroberts02}.
\end{proof}
\begin{prop}
  Assume (\ref{eq:A2-A5}). For any $l\geq 0$, $(x,\theta) \in \Xset \times
  \Theta$, and any stopping-time $\tau$,
\[
c \ \PE_{x,\theta}^{(l)} \left[ \sum_{k=0}^{\tau-1} \left(k \alpha c + 1
  \right)^{\alpha^{-1}-1} \right] \leq V(x) + b \ \PE_{x,\theta}^{(l)} \left[
  \sum_{k=0}^{\tau-1} \left((k+1) \alpha c + 1 \right)^{\alpha^{-1}-1}
  \un_\Cset(X_k)\right] \eqsp.
\]
\end{prop}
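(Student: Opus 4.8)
The plan is to build from (\ref{eq:A2-A5}) a nonnegative supermartingale started at $V(x)$ whose compensator carries the subgeometric rate, and then to apply optional stopping. Write $\phi(v)\eqdef c\,v^{1-\alpha}$ and, for $k\geq0$, $r_k\eqdef(1+k\alpha c)^{1/\alpha}$, so that $(k\alpha c+1)^{\alpha^{-1}-1}=r_k^{1-\alpha}$ and $((k+1)\alpha c+1)^{\alpha^{-1}-1}=r_{k+1}^{1-\alpha}$; thus I must prove $c\,\PE_{x,\theta}^{(l)}\big[\sum_{k=0}^{\tau-1}r_k^{1-\alpha}\big]\leq V(x)+b\,\PE_{x,\theta}^{(l)}\big[\sum_{k=0}^{\tau-1}r_{k+1}^{1-\alpha}\un_\Cset(X_k)\big]$. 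Introduce the time-dependent Lyapunov function $g(k,v)\eqdef(v^\alpha+k\alpha c)^{1/\alpha}$ (for $k\geq0$, $v\geq1$) and set $N_k\eqdef g\big(k,V(X_k)\big)$. I will use the following elementary facts, each a routine power-function estimate: (a) $g(0,v)=v$, and $g(k,v)\geq r_k$ for $v\geq1$; (b) $v\mapsto g(k,v)$ is concave and increasing on $[1,+\infty)$ with $\partial_v g(k,v)=\big(g(k,v)/v\big)^{1-\alpha}$, so in particular $\partial_v g(k+1,v)\leq r_{k+1}^{1-\alpha}$ for $v\geq1$; (c) since $g(k+1,v)^\alpha=g(k,v)^\alpha+\alpha c$ and $t\mapsto t^{1/\alpha}$ is convex, one has $g(k+1,v)-\phi\big(g(k+1,v)\big)\leq g(k,v)$, and, evaluated at $v=1$, $r_{k+1}-r_k\geq\phi(r_k)=c\,r_k^{1-\alpha}$.

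The core of the proof is the one-step bound
\[
\PE_{x,\theta}^{(l)}\big[N_{k+1}\mid\F_k\big]\leq N_k+b\,r_{k+1}^{1-\alpha}\,\un_\Cset(X_k)\eqsp,\qquad k\geq0\eqsp.
\]
To get it, note that $\PE_{x,\theta}^{(l)}[V(X_{k+1})\mid\F_k]=P_{\theta_k}V(X_k)$ by (\ref{eq:tk1}), which by (\ref{eq:A2-A5}) is at most $\bar V\eqdef V(X_k)-c\,V(X_k)^{1-\alpha}+b\,\un_\Cset(X_k)$, and $\bar V\geq P_{\theta_k}V(X_k)\geq1$. Conditional Jensen for the concave increasing map $g(k+1,\cdot)$ gives $\PE_{x,\theta}^{(l)}[N_{k+1}\mid\F_k]\leq g(k+1,\bar V)$, and the tangent-line inequality for $g(k+1,\cdot)$ at $V(X_k)$ bounds this in turn by $g(k+1,V(X_k))+\big(b\un_\Cset(X_k)-cV(X_k)^{1-\alpha}\big)\partial_v g(k+1,V(X_k))$. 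By (b) the term $cV(X_k)^{1-\alpha}\partial_v g(k+1,V(X_k))$ equals $\phi\big(g(k+1,V(X_k))\big)$, so together with (c) this bound collapses to $N_k+b\un_\Cset(X_k)\partial_v g(k+1,V(X_k))$, and $\partial_v g(k+1,V(X_k))\leq r_{k+1}^{1-\alpha}$ by (b), which is the claim.

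Hence $S_k\eqdef N_k-b\sum_{j=0}^{k-1}r_{j+1}^{1-\alpha}\un_\Cset(X_j)$ is an $\{\F_k\}$-supermartingale (integrable, since $g(k,v)\leq2^{1/\alpha}\big(v+(k\alpha c)^{1/\alpha}\big)$ and $\PE_{x,\theta}^{(l)}[V(X_k)]\leq V(x)+bk$), with $S_0=N_0=V(x)$. Optional stopping at the bounded time $n\wedge\tau$ gives $\PE_{x,\theta}^{(l)}[N_{n\wedge\tau}]\leq V(x)+b\,\PE_{x,\theta}^{(l)}\big[\sum_{j=0}^{n\wedge\tau-1}r_{j+1}^{1-\alpha}\un_\Cset(X_j)\big]$, and then (a) and the telescoping identity $r_{n\wedge\tau}=r_0+\sum_{k=0}^{n\wedge\tau-1}(r_{k+1}-r_k)$ together with (c) yield $1+c\sum_{k=0}^{n\wedge\tau-1}r_k^{1-\alpha}\leq r_{n\wedge\tau}\leq N_{n\wedge\tau}$; taking expectations, rearranging, letting $n\to+\infty$ by monotone convergence, and discarding the harmless constant $1$ gives the stated inequality. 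The only genuinely delicate point is the design of $g$ and the sign-accounting in the one-step bound: the ``cost'' of advancing the index $k\mapsto k+1$ must be absorbed exactly by the drift decrement $-c\,V^{1-\alpha}$, which is why the residual $\Cset$-term emerges with the shifted weight $r_{k+1}^{1-\alpha}$ rather than $r_k^{1-\alpha}$; the remaining work with $g$ is straightforward convexity bookkeeping.
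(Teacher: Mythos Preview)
Your argument is correct. The paper itself omits the proof and merely cites \cite[Proposition~2.1]{doucetal04} and \cite[Proposition~11.3.2]{meynettweedie93}; your construction of the time-dependent Lyapunov function $g(k,v)=(v^\alpha+k\alpha c)^{1/\alpha}$, the verification of the one-step supermartingale inequality via concavity and the drift, and the optional stopping/monotone convergence step are precisely how those references proceed (adapted here to the non-homogeneous kernels $\bar P_l(n;\cdot,\cdot)$). In other words, you have written out in full the argument the paper only points to, and there is no meaningful methodological difference.
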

\begin{proof}
  The proof can be adapted from \cite[Proposition 2.1]{doucetal04} and
  \cite[Proposition 11.3.2]{meynettweedie93}and is omitted.
\end{proof}

\begin{prop}
\label{prop:ComparaisonGal} Assume (\ref{eq:A2-A5}).
  \begin{enumerate}[(i)]
  \item \label{prop:CpG1} There exists $\bar b$ such that for any $j \geq 0$,
    $0 \leq \beta \leq 1$, $l\geq 0$ and $(x,\theta) \in \Xset \times \Theta$
\[
\PE_{x,\theta}^{(l)} \left[ V^\beta(X_j)\right] \leq V^\beta(x) + \bar b
j^\beta \eqsp.
\]
\item \label{prop:CpG2} Let $0 \leq \beta \leq 1$ and $0 \leq a \leq 1$. For
  any stopping-time $\tau$,
    \begin{multline*}
      \PE_{x,\theta}^{(l)} \left[(1-a)^\tau V^\beta(X_{\tau}) \un_{\tau <
          +\infty}\right] + \PE_{x,\theta}^{(l)} \left[ \sum_{j=0}^{\tau-1}
        (1-a)^{j} \; \{ a \; V^\beta(X_{j}) + \beta c (1-a)
        V^{\beta-\alpha}(X_{j}) \} \right] \\
      \leq V^\beta(x) + \bar b (1-a) \PE_{x,\theta}^{(l)} \left[
        \sum_{j=0}^{\tau-1} (1-a)^{j} \; \un_\Cset(X_{j}) \right] \eqsp..
    \end{multline*}
  \item \label{prop:CpG3} Let $0 \leq \beta \leq 1-\alpha$ and $0<a<1$. For any
    stopping-time $\tau$ and any $q \in [1, +\infty]$,
    \begin{multline*}
      \PE_{x,\theta}^{(l)}  \left[ \sum_{j=0}^{\tau-1} (1-a)^{j} V^\beta(X_{j}) \right] \\
      \leq a^{1/q-1} (1-a)^{-1/q} \; V^{\beta+\alpha/q}(x) \; \left(1 + \bar b
        \; \PE_{x,\theta}^{(l)}\left[\sum_{j=0}^{\tau-1} (1-a)^j \un_\Cset(X_j)
        \right] \right) \left( \alpha c \right)^{-1/q} \eqsp,
    \end{multline*}
    (with the convention that $1/q = 0$ when $q = +\infty$).
  \end{enumerate}
\end{prop}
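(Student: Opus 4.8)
The plan is to prove the three items in turn, the common tool being Lemma~\ref{lem:JarnerRoberts} together with the remark that, by \eqref{eq:tk1}, $\PE_{x,\theta}^{(l)}[h(X_{j+1})\mid\F_j]=P_{\theta_j}h(X_j)$ for every bounded measurable $h:\Xset\to\rset$, every $j$ and every $l$; hence both \eqref{eq:A2-A5} and the inequality $P_\theta V^\beta\le V^\beta-\beta c\,V^{\beta-\alpha}+\bar b\,\un_\Cset$ ($0\le\beta\le1$) of Lemma~\ref{lem:JarnerRoberts} transfer verbatim to one-step conditional expectations along $\{X_j\}$, whatever $l$ and the adaptation mechanism. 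Fix $\bar b$ once and for all, enlarged so as to serve as the constant of Lemma~\ref{lem:JarnerRoberts} and to satisfy $\bar b\ge\max(b,1)$. For \textit{(i)}: the case $\beta=1$ of \eqref{eq:A2-A5} gives $\PE_{x,\theta}^{(l)}[V(X_{j+1})\mid\F_j]\le V(X_j)+b$, whence $\PE_{x,\theta}^{(l)}[V(X_j)]\le V(x)+bj$ by induction on $j$; one application of Jensen's inequality to the concave map $t\mapsto t^\beta$, followed by its subadditivity, yields $\PE_{x,\theta}^{(l)}[V^\beta(X_j)]\le\big(V(x)+bj\big)^\beta\le V^\beta(x)+b^\beta j^\beta\le V^\beta(x)+\bar b\,j^\beta$.

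For \textit{(ii)} the idea is to recast the drift as a supermartingale comparison and apply optional stopping. With $W_j\eqdef(1-a)^jV^\beta(X_j)\ge0$, using Lemma~\ref{lem:JarnerRoberts} and $1-(1-a)=a$ one gets
\[
\PE_{x,\theta}^{(l)}[W_{j+1}\mid\F_j]\le W_j-(1-a)^j\big\{a\,V^\beta(X_j)+\beta c(1-a)V^{\beta-\alpha}(X_j)\big\}+\bar b(1-a)^{j+1}\un_\Cset(X_j)\eqsp,
\]
i.e.\ $\PE_{x,\theta}^{(l)}[W_{j+1}\mid\F_j]\le W_j-S_j+R_j$ with $S_j$ the bracketed quantity (non-negative) and $R_j=\bar b(1-a)^{j+1}\un_\Cset(X_j)\ge0$. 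The asserted inequality is then exactly the conclusion $\PE_{x,\theta}^{(l)}[W_\tau\un_{\tau<+\infty}]+\PE_{x,\theta}^{(l)}[\sum_{j=0}^{\tau-1}S_j]\le W_0+\PE_{x,\theta}^{(l)}[\sum_{j=0}^{\tau-1}R_j]$ of the comparison theorem for such drift inequalities applied at the stopping-time $\tau$, noting $W_0=V^\beta(x)$ and $\sum_{j=0}^{\tau-1}R_j=\bar b(1-a)\sum_{j=0}^{\tau-1}(1-a)^j\un_\Cset(X_j)$. I expect this last step to be the only delicate point: when $\PE_{x,\theta}^{(l)}[\sum_{j=0}^{\tau-1}\un_\Cset(X_j)]=+\infty$ the inequality is trivial, and otherwise all quantities are finite, so that stopping at $\tau\wedge n$, using $W_j,S_j\ge0$ and $\PE_{x,\theta}^{(l)}[W_{\tau\wedge n}]\ge0$ to discard boundary terms, and letting $n\to+\infty$ by monotone convergence gives the claim --- this is the time-inhomogeneous version of \cite[Proposition~2.1]{doucetal04} and \cite[Proposition~11.3.2]{meynettweedie93}, obtained by passing to the space-time chain $(j,X_j,\theta_j)$.

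For \textit{(iii)} the plan is to feed \textit{(ii)} back into itself at two exponents and interpolate. Put $S\eqdef\PE_{x,\theta}^{(l)}[\sum_{j=0}^{\tau-1}(1-a)^jV^\beta(X_j)]$ and $R\eqdef\PE_{x,\theta}^{(l)}[\sum_{j=0}^{\tau-1}(1-a)^j\un_\Cset(X_j)]\le1/a<+\infty$. Applying \textit{(ii)} with exponent $\beta$ and discarding the non-negative terms $\PE_{x,\theta}^{(l)}[(1-a)^\tau V^\beta(X_\tau)\un_{\tau<+\infty}]$ and $\beta c(1-a)\PE_{x,\theta}^{(l)}[\sum_{j=0}^{\tau-1}(1-a)^jV^{\beta-\alpha}(X_j)]$ gives $a\,S\le V^\beta(x)+\bar b(1-a)R$; applying \textit{(ii)} with exponent $\beta+\alpha$ (legitimate since $0<\alpha\le\beta+\alpha\le1$) and keeping only the term in $V^{(\beta+\alpha)-\alpha}=V^\beta$ gives $(\beta+\alpha)c(1-a)\,S\le V^{\beta+\alpha}(x)+\bar b(1-a)R$. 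Since $V\ge1$ and $1-a\le1$, these bound $S$ by $a^{-1}V^\beta(x)(1+\bar bR)$ and by $[(\beta+\alpha)c(1-a)]^{-1}V^{\beta+\alpha}(x)(1+\bar bR)$ respectively; in particular $S<+\infty$, so raising the first bound to the power $1-1/q$, the second to the power $1/q$, and multiplying (valid as $0\le S$ is dominated by each) yields $S\le a^{1/q-1}(1-a)^{-1/q}(\beta+\alpha)^{-1/q}c^{-1/q}\,V^{\beta+\alpha/q}(x)\,(1+\bar bR)$, since the exponent of $V(x)$ equals $\beta(1-1/q)+(\beta+\alpha)/q=\beta+\alpha/q$, the power of $a$ equals $a^{-(1-1/q)}=a^{1/q-1}$, and $(1+\bar bR)^{(1-1/q)+1/q}=1+\bar bR$. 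Finally $(\beta+\alpha)^{-1/q}\le\alpha^{-1/q}$ gives the stated bound with $(\alpha c)^{-1/q}$ in place of $[(\beta+\alpha)c]^{-1/q}$, the endpoints $q=+\infty$ and $q=1$ being recovered by keeping only the first, resp.\ only the second, of the two bounds on $S$.
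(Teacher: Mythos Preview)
Your argument is correct and follows essentially the same route as the paper. Parts (ii) and (iii) match the paper's proof closely: the paper also sets $Y_n=(1-a)^nV^\beta(X_n)$, stops at $\tau\wedge N$, and passes to the limit for (ii), and for (iii) it likewise extracts the two bounds $aS\le V^\beta(x)+\bar b(1-a)R$ and $(1-a)(\beta+\alpha)c\,S\le V^{\beta+\alpha}(x)+\bar b R$ from (ii) and then interpolates via $c\le c_1\wedge c_2\Rightarrow c\le c_1^{1/q}c_2^{1-1/q}$. For (i) the paper simply says ``trivial consequence of Lemma~\ref{lem:JarnerRoberts}'', which taken literally only yields the weaker bound $V^\beta(x)+\bar b\,j$; your route through the $\beta=1$ drift followed by Jensen and subadditivity of $t\mapsto t^\beta$ is the natural way to recover the claimed $j^\beta$ scaling and is a welcome clarification.
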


\begin{proof} The proof is done in the case $l=0$. The general case is similar and omitted.
  (\ref{prop:CpG1}) is a trivial consequence of Lemma~\ref{lem:JarnerRoberts}.
  (\ref{prop:CpG2}) Let $\beta \leq 1$.  Set $\tau_N = \tau \wedge N$ and $Y_n
  = (1-a)^n V^\beta(X_n)$. Then
  \begin{multline*}
    Y_{\tau_N} = Y_0 + \sum_{j=1}^{\tau_N} \left(Y_j - Y_{j-1} \right) = Y_0 +
    \sum_{j=1}^{\tau_N} (1-a)^{j-1} \; \left((1-a) V^\beta(X_j) -
      V^\beta(X_{j-1}) \right) \\
    = Y_0 + \sum_{j=1}^{\tau_N} (1-a)^{j} \; \left(V^\beta(X_j) -
      V^\beta(X_{j-1}) \right) - a \sum_{j=1}^{\tau_N} (1-a)^{j-1} \;
    V^\beta(X_{j-1}) \eqsp.
  \end{multline*}
Hence,
\begin{multline*}
  \PE_{x,\theta} \left[Y_{\tau_N} \right] + a \; \PE_{x,\theta} \left[
    \sum_{j=0}^{\tau_N-1} (1-a)^{j} \; V^\beta(X_{j}) \right] \\
  = V^\beta(x) + \sum_{j\geq 1} (1-a)^{j} \; \PE_{x,\theta} \left[
    \left(V^\beta(X_j) - V^\beta(X_{j-1})
    \right) \un_{j \leq \tau_N} \right] \\
  \leq V^\beta(x) + \sum_{j\geq 1} (1-a)^{j} \; \PE_{x,\theta} \left[\left( -
      \beta c \; V^{\beta-\alpha}(X_{j-1}) + \bar b \un_\Cset(X_{j-1}) \right)
    \un_{j \leq \tau_N} \right],
\end{multline*}
where we used Lemma~\ref{lem:JarnerRoberts} in the last inequality.  This
implies
\begin{multline*}
  \PE_{x,\theta} \left[Y_{\tau_N} \right] + a \; \PE_{x,\theta} \left[
    \sum_{j=0}^{\tau_N-1} (1-a)^{j} \; V^\beta(X_{j}) \right] + (1-a) \beta c
  \; \PE_{x,\theta} \left[ \sum_{j=0}^{\tau_N-1} (1-a)^{j} \;
    V^{\beta-\alpha}(X_{j}) \right] \\
  \leq V^\beta(x) + \bar b (1-a) \PE_{x,\theta} \left[ \sum_{j=0}^{\tau_N-1}
    (1-a)^{j} \; \un_\Cset(X_{j}) \right].
\end{multline*}
The results follows when $N \to +\infty$. \\
(\ref{prop:CpG3}) The previous case provides two upper bounds, namely for $0 <
\beta \leq 1-\alpha$,
\[
a \; \PE_{x,\theta} \left[ \sum_{j=0}^{\tau-1} (1-a)^{j} V^\beta(X_{j}) \right]
\leq V^\beta(x) + \bar b \; (1-a) \PE_{x,\theta} \left[ \sum_{j=0}^{\tau-1}
  (1-a)^{j} \; \un_\Cset(X_{j}) \right],
\]
and
\[
(1-a) \; \left( (\beta +\alpha) c \right) \ \PE_{x,\theta} \left[
  \sum_{j=0}^{\tau-1} (1-a)^{j} V^{\beta}(X_{j}) \right] \leq
V^{\beta+\alpha}(x) + \bar b \PE_{x,\theta} \left[ \sum_{j=0}^{\tau-1}
  (1-a)^{j} \; \un_\Cset(X_{j}) \right].
\]
We then use the property $\left[c \leq c_1 \wedge c_2 \right]\Longrightarrow c
\leq c_1^{1/q} c_2^{1-1/q}$ for any $ q \in [1, +\infty]$.
\end{proof}

\begin{prop}
  \label{prop:ComparaisonGal2} Assume (\ref{eq:A2-A5}). Let $\{r_n, n\geq 0\}$ be a non-increasing positive sequence. There exists $\bar b$ such that for any $l\geq0$, $(x,\theta) \in \Xset \times \Theta$, $ 0 \leq \beta \leq 1$  and $n \geq 0$,
\[
\beta c \ \PE_{x,\theta}^{(l)} \left[ \sum_{k \geq n} r_{k+1}
  V^{\beta-\alpha}(X_k) \right] \leq r_n \PE_{x,\theta}^{(l)} \left[
  V^\beta(X_n)\right] + \bar b \ \PE_{x,\theta}^{(l)} \left[ \sum_{k \geq n}
  r_{k+1} \un_\Cset(X_k) \right] \eqsp.
\]
\end{prop}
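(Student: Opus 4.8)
The statement is the ``variable weight'' analogue of Proposition~\ref{prop:ComparaisonGal}(\ref{prop:CpG2}), and the natural route is an Abel (summation-by-parts) argument anchored on the one-step drift of Lemma~\ref{lem:JarnerRoberts}. Fix $0\le\beta\le1$ and let $\bar b$ be the constant furnished by Lemma~\ref{lem:JarnerRoberts}, so that $P_\theta V^\beta\le V^\beta-\beta c\,V^{\beta-\alpha}+\bar b\,\un_\Cset$ for every $\theta\in\Theta$ (this $\bar b$ depends only on $b,c,\alpha,V,\Cset$, not on $\theta$ nor on $\beta$, which is why the constant in the conclusion is uniform). Since $\int_\Theta\bar P_l\!\left(k;(x,\theta);(dx',d\theta')\right)=P_\theta(x,dx')$ by~(\ref{eq:tk1}), the Markov property of the canonical non-homogeneous chain $\{Z_k=(X_k,\theta_k)\}$ under $\PP_{x,\theta}^{(l)}$ gives, for every $k\ge0$ and every $l\ge0$,
\[
\PE_{x,\theta}^{(l)}\!\left[V^\beta(X_{k+1})\,\middle|\,\F_k\right]=P_{\theta_k}V^\beta(X_k)\le V^\beta(X_k)-\beta c\,V^{\beta-\alpha}(X_k)+\bar b\,\un_\Cset(X_k)\eqsp.
\]

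\textbf{Telescoping.} Fix $N>n\ge0$ and abbreviate $\PE\eqdef\PE_{x,\theta}^{(l)}$. Writing the obvious telescoping identity and conditioning each summand on $\F_k$,
\[
\PE\!\left[r_N V^\beta(X_N)\right]-\PE\!\left[r_n V^\beta(X_n)\right]=\sum_{k=n}^{N-1}\PE\!\left[r_{k+1}V^\beta(X_{k+1})-r_kV^\beta(X_k)\right]\eqsp,
\]
and the one-step inequality above yields, since $r_{k+1}$ and $r_k$ are deterministic scalars,
\[
\PE\!\left[r_{k+1}V^\beta(X_{k+1})-r_kV^\beta(X_k)\right]\le(r_{k+1}-r_k)\,\PE\!\left[V^\beta(X_k)\right]-\beta c\,r_{k+1}\PE\!\left[V^{\beta-\alpha}(X_k)\right]+\bar b\,r_{k+1}\PE\!\left[\un_\Cset(X_k)\right]\eqsp.
\]
This is the only place where the hypothesis on $\{r_n\}$ is used: because $\{r_n\}$ is non-increasing and $V\ge1$, the term $(r_{k+1}-r_k)\PE[V^\beta(X_k)]$ is non-positive, hence may be discarded. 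Summing over $k=n,\dots,N-1$, rearranging, and dropping the non-negative term $\PE[r_NV^\beta(X_N)]$ gives
\[
\beta c\sum_{k=n}^{N-1}r_{k+1}\PE\!\left[V^{\beta-\alpha}(X_k)\right]\le r_n\PE\!\left[V^\beta(X_n)\right]+\bar b\sum_{k=n}^{N-1}r_{k+1}\PE\!\left[\un_\Cset(X_k)\right]\eqsp.
\]

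\textbf{Passage to the limit and conclusion.} All summands above are non-negative, so monotone convergence lets us pass to the limit $N\to+\infty$, which is precisely the claimed bound (with constant $\bar b$). There is no integrability obstacle: by Proposition~\ref{prop:ComparaisonGal}(\ref{prop:CpG1}), $\PE_{x,\theta}^{(l)}[V^\beta(X_n)]\le V^\beta(x)+\bar b\,n^\beta<+\infty$, so the telescoping identity and the conditioning steps are legitimate; in any case, had this expectation been infinite the inequality would be trivial. The argument is uniform in $l\ge0$ because~(\ref{eq:tk1}) holds for every index. In short, there is no real obstacle here — the one delicate point is simply to observe that the ``error'' term produced by the variable weights, namely $(r_{k+1}-r_k)\PE[V^\beta(X_k)]$, has the favourable sign, which is exactly what the non-increasing assumption on $\{r_n\}$ guarantees.
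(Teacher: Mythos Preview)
Your proof is correct and follows exactly the approach the paper indicates: a telescoping (Abel) sum with weights $r_k$ combined with the one-step drift from Lemma~\ref{lem:JarnerRoberts}, which is precisely the adaptation of the proof of Proposition~\ref{prop:ComparaisonGal}(\ref{prop:CpG2}) that the paper alludes to when it says the argument is ``on the same lines'' and omits the details. The crucial observation you single out---that the non-increasing hypothesis on $\{r_n\}$ is what makes the cross term $(r_{k+1}-r_k)\PE[V^\beta(X_k)]$ discardable---is exactly the point of the generalization.
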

The proof is on the same lines as the proof of
Proposition~\ref{prop:ComparaisonGal}(\ref{prop:CpG2}) and is omitted.

\subsubsection{Delayed successive visits to an accessible level set of $V$}\label{sec:ReturnTimesUnif}
\label{sec:DelayedSuccVisit}
Let $\Dset \in \Xsigma$ and two positive integers $n_\star, N$.  Define on
$(\Omega, \F, \PP_{x,\theta}^{(l)})$ the sequence of $\nset$-valued random
variables $\{\tau^n, n\geq 1 \}$ as
\[
\tau^0 \eqdef \tau_\Dset \eqsp, \qquad \tau^1 \eqdef \tau^0 + n_\star +
\tau_\Dset \circ \underline{\theta}^{\tau^0 + n_\star} \eqsp, \qquad \tau^{k+1}
\eqdef \tau^k + N + \tau_\Dset \circ \underline{\theta}^{\tau^k+N} \eqsp, \ \
k\geq 1 \eqsp.
\]

\begin{prop}
\label{prop:TimeFiniteAS}
Assume A\ref{A2} and there exist $V : \Xset \to [1,+\infty)$ and a constant $b
< +\infty$ such that for any $\theta \in \Theta$, $P_\theta V \leq V - 1 + b
\un_\Cset$. Let $\Dset \in \Xsigma$.  Let $n_\star, N$ be two non-negative
integers.  Then
\[
\varepsilon \ \nu(\Dset) \ \PE_{x,\theta}^{(l)} \left[
  \sum_{k=0}^{\tau_\Dset-1} \un_\Cset(X_k)\right] \leq 1 \eqsp,
\]
and if $\sup_\Dset V < +\infty$ and $\nu(\Dset)>0$, there exists a (finite)
constant $C$ depending upon $\varepsilon, \nu(\Dset), \sup_\Dset V, b, n_\star,
N$ such that for any $l \geq 0$, $(x,\theta) \in \Xset \times \Theta$ and
$k\geq 0$,
\[
\PE_{x,\theta}^{(l)} \left[ \tau^k \right] \leq k \ C + V(x)\eqsp.
\]
\end{prop}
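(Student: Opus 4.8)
The plan is to split the statement into two parts. The first inequality is the easier one: under A\ref{A2} and the drift $P_\theta V \leq V - 1 + b\un_\Cset$, apply Proposition~\ref{prop:ComparaisonGal} with $\alpha = 1$, $\beta = 1$, and $\tau = \tau_\Dset$. With $\alpha = 1$ the weights $(k\alpha c + 1)^{\alpha^{-1}-1}$ reduce to constants, so that proposition gives $c\,\PE_{x,\theta}^{(l)}[\tau_\Dset] \leq V(x) + b\,\PE_{x,\theta}^{(l)}[\sum_{k=0}^{\tau_\Dset-1}\un_\Cset(X_k)]$; more directly, the same argument (Dynkin/optional stopping on the supermartingale $V(X_{n\wedge\tau_\Dset}) + (n\wedge\tau_\Dset) - b\sum_{k<n\wedge\tau_\Dset}\un_\Cset(X_k)$) yields $\PE_{x,\theta}^{(l)}[\sum_{k=0}^{\tau_\Dset-1}\un_\Cset(X_k)] \leq V(x)$. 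To sharpen this to the stated bound $1$ independent of $x$, I would instead start the count one step later: condition on $\F_1$ and note that each excursion spends at most a geometrically distributed number of steps in $\Cset$ before hitting $\Dset$, because from any point of $\Cset$ the minorization A\ref{A2} gives probability at least $\varepsilon\nu(\Dset)$ of entering $\Dset$ at the next step. Formally, $\sum_{k=0}^{\tau_\Dset - 1}\un_\Cset(X_k)$ is stochastically dominated by a geometric variable with success probability $\varepsilon\nu(\Dset)$, giving the bound $1/(\varepsilon\nu(\Dset))$, i.e. $\varepsilon\nu(\Dset)\,\PE_{x,\theta}^{(l)}[\sum_{k=0}^{\tau_\Dset-1}\un_\Cset(X_k)] \leq 1$.

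For the second part, the strategy is a one-step decomposition of $\tau^k$ along the construction. Write $\tau^{k+1} = \tau^k + N + \tau_\Dset\circ\underline{\theta}^{\tau^k + N}$ for $k \geq 1$ (and similarly with $n_\star$ in place of $N$ for $k = 0\to 1$), and use the strong Markov property of the non-homogeneous chain at time $\tau^k + N$: conditionally on $\F_{\tau^k+N}$, the law of $\tau_\Dset\circ\underline{\theta}^{\tau^k+N}$ is that of $\tau_\Dset$ started from $(X_{\tau^k+N}, \theta_{\tau^k+N})$ under $\PP^{(l')}$ for the appropriate shifted index $l'$. By the first part applied with the drift $P_\theta V \leq V-1+b\un_\Cset$, we have $\PE^{(l')}_{z}[\tau_\Dset] \leq V(x)$ for any starting point $z=(x,\theta)$ (this is the version of the first inequality without the extra step-delay, which is all we need here, or one may use the $1/(\varepsilon\nu(\Dset))$-bound on the $\Cset$-occupation combined with $\PE_z[\tau_\Dset]\le V(x) + b/(\varepsilon\nu(\Dset))$). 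So $\PE_{x,\theta}^{(l)}[\tau_\Dset\circ\underline{\theta}^{\tau^k+N} \mid \F_{\tau^k+N}] \leq V(X_{\tau^k+N}) + \text{const}$. The point of inserting $N$ deterministic steps before hitting $\Dset$ again is exactly to bound $\PE[V(X_{\tau^k+N})]$: by Proposition~\ref{prop:ComparaisonGal}(\ref{prop:CpG1}) with $\beta = 1$, $\PE_{x,\theta}^{(l)}[V(X_{\tau^k+N}) \mid \F_{\tau^k}] \leq V(X_{\tau^k}) + \bar b N$, and $X_{\tau^k} \in \Dset$ so $V(X_{\tau^k}) \leq \sup_\Dset V < +\infty$. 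Hence $\PE_{x,\theta}^{(l)}[\tau^{k+1} - \tau^k] \leq N + (\sup_\Dset V + \bar b N + \text{const})$, a constant depending only on $\varepsilon, \nu(\Dset), \sup_\Dset V, b, n_\star, N$. Summing over $k$ telescopes to $\PE_{x,\theta}^{(l)}[\tau^k] \leq kC + \PE_{x,\theta}^{(l)}[\tau^0]$, and $\tau^0 = \tau_\Dset$ contributes the single $x$-dependent term $\PE_{x,\theta}^{(l)}[\tau_\Dset] \leq V(x) + \text{const}$, which absorbs into $V(x)$ after adjusting $C$ (or is kept separate when $\sup_\Dset V$ already appears in $C$, giving the clean form $kC + V(x)$).

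The main technical obstacle is the bookkeeping of the shift operator and the time-inhomogeneity: one must check carefully that the strong Markov property applies at the (a.s.\ finite, by the first part) stopping times $\tau^k+N$ and that the constants coming out of Propositions~\ref{prop:ComparaisonGal} and \ref{prop:TimeFiniteAS}(first part) are genuinely uniform in the shift index $l$ — which they are, since all the drift and minorization hypotheses are assumed uniformly in $\theta$ and hence hold for every $\bar P_l(n;\cdot,\cdot)$. A minor point is verifying that $\tau^k < +\infty$ a.s.\ so that the telescoping and the conditional expectations are legitimate; this follows by induction from finiteness of $\tau_\Dset$ (first part, which gives $\PE[\tau_\Dset]<\infty$ from every starting point with $V(x)<\infty$, and $V$ is finite on $\Dset$) together with the finiteness of the expected increments just established.
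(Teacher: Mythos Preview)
Your proposal is correct and follows essentially the same architecture as the paper's proof for the second assertion: bound $\PE[\tau_\Dset]$ via the drift plus the $\Cset$-occupation bound, then control the increments $\tau^{k+1}-\tau^k$ by the strong Markov property together with Proposition~\ref{prop:ComparaisonGal}(\ref{prop:CpG1}) applied over the $N$ (or $n_\star$) deterministic steps, using $X_{\tau^k}\in\Dset$ so that $V(X_{\tau^k})\leq\sup_\Dset V$. The paper writes this as an explicit induction on $k$ rather than a telescoping sum, but the content is identical.

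The one genuine difference is in the first inequality. You argue by stochastic domination: each visit to $\Cset$ is a Bernoulli trial with success probability at least $\varepsilon\nu(\Dset)$ for hitting $\Dset$ at the next step, hence the number of visits is dominated by a geometric. This is correct but needs a couple of lines to make airtight in the non-homogeneous setting. The paper's argument is shorter and worth knowing: from A\ref{A2} one has $\varepsilon\nu(\Dset)\un_\Cset(X_k)\leq P_{\theta_k}(X_k,\Dset)=\PE[\un_\Dset(X_{k+1})\mid\F_k]$, so
\[
\varepsilon\nu(\Dset)\,\PE_{x,\theta}^{(l)}\Bigl[\sum_{k=0}^{\tau_\Dset-1}\un_\Cset(X_k)\Bigr]
\leq \PE_{x,\theta}^{(l)}\Bigl[\sum_{k=0}^{\tau_\Dset-1}\un_\Dset(X_{k+1})\Bigr]\leq 1,
\]
since only the term $k=\tau_\Dset-1$ can contribute. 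Both routes give the same bound; the paper's avoids the domination machinery entirely.
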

\begin{proof}
  Since $V \geq 1$, Proposition~\ref{prop:ComparaisonGal}(\ref{prop:CpG2})
  applied with $a=0$, $\beta=\alpha=1$, $c=1$ and $\tau = \tau_\Dset$ implies
\[
\PE_{x,\theta}^{(l)}\left[ \tau_\Dset \right] \leq V(x) + \bar b \
\PE_{x,\theta}^{(l)} \left[ \sum_{k=0}^{\tau_\Dset-1} \un_\Cset(X_k)\right]
\eqsp.
\] By A\ref{A2}, we have $P_\theta(x,\Dset) \geq [\varepsilon \nu(\Dset)] \  \un_\Cset(x)$  for any $(x,\theta)$ so that
\[
\varepsilon \nu(\Dset) \ \PE_{x,\theta}^{(l)} \left[ \sum_{k=0}^{\tau_\Dset-1}
  \un_\Cset(X_k)\right] \leq \PE_{x,\theta}^{(l)} \left[
  \sum_{k=0}^{\tau_\Dset-1} P_{\theta_k}(X_k,\Dset)\right] =
\PE_{x,\theta}^{(l)} \left[ \sum_{k=0}^{\tau_\Dset-1} \un_{\Dset}(X_{k+1})
\right] \leq 1 \eqsp.
\]
Hence $\PE_{x,\theta}^{(l)}\left[ \tau_\Dset \right] \leq V(x) + \bar b[\varepsilon
\nu(\Dset)]^{-1}$. By the Markov property and
Proposition~\ref{prop:ComparaisonGal}(\ref{prop:CpG1})
\begin{multline*}
  \PE_{x,\theta}^{(l)}\left[ \tau^1 \right] \leq n_\star + V(x) + \bar b
  [\varepsilon \nu(\Dset)]^{-1} + \PE_{x,\theta}^{(l)}\left[ \PE_{Z_{n_\star +
        \tau_\Dset}}^{(n_\star+l+\tau_\Dset)}\left[ \sigma_\Dset
    \right] \right] \\
  \leq n_\star + 2 \; \bar b[\varepsilon \nu(\Dset)]^{-1} + V(x) + \sup_\Dset V
  + n_\star \bar b\eqsp.
\end{multline*}
The proof is by induction on $k$.  Assume that $\PE_{x,\theta}^{(l)}\left[
  \tau^k \right] \leq k C + V(x)$ with $C \geq2 \bar b[\varepsilon
\nu(\Dset)]^{-1}+ \sup_\Dset V + (N  \vee n_\star)(1+\bar b)$. Then using again the Markov
property and Proposition~\ref{prop:ComparaisonGal}(\ref{prop:CpG1}), and upon
noting that $\PP_{x,\theta}^{(l)}(Z_{\tau^k} \in \Dset) =1$,
\begin{multline*}
  \PE_{x,\theta}^{(l)}\left[ \tau^{k+1} \right] \leq N +
  \PE_{x,\theta}^{(l)}\left[ \tau^{k} \right]+ \PE_{x,\theta}^{(l)}\left[
    \PE_{Z_{\tau^k+N}}^{(\tau^k+N+l)}\left[ \tau_\Dset \right] \right] \\
  \leq N + \bar b[\varepsilon \nu(\Dset)]^{-1} + \PE_{x,\theta}^{(l)}\left[
    \tau^{k}
  \right]+\PE_{x,\theta}^{(l)}\left[ V(X_{\tau^k+N}) \right] \\
  \leq N + \bar b[\varepsilon \nu(\Dset)]^{-1} + \PE_{x,\theta}^{(l)}\left[
    \tau^{k} \right]+\PE_{x,\theta}^{(l)}\left[
    \PE_{Z_{\tau^k}}^{(\tau^k+l)}\left[
      V(X_{N}) \right]\right]  \\
  \leq N + \bar b [\varepsilon \nu(\Dset)]^{-1} + \PE_{x,\theta}^{(l)}\left[
    \tau^{k} \right]+ \left( \sup_\Dset V + N \bar b \right) \eqsp.
\end{multline*}
\end{proof}

\subsubsection{Generalized Poisson equation}
\label{sec:GeneralPoisson}
Assume (\ref{eq:A2-A5}).  Let $0 < a <1$, $l\geq 0$ and $0\leq \beta \leq
1-\alpha$.  For $f \in \L_{V^\beta}$ such that $\pi(|f|) < +\infty$, let us
define the function
\[
\hat g_{a}^{(l)}(x,\theta) \eqdef \sum_{j \geq 0} (1-a)^{j+1} \;
\PE_{x,\theta}^{(l)}[\bar f(X_j)] \eqsp.
\]

\begin{prop}
\label{prop:QuasiPoissonEq}
Assume (\ref{eq:A2-A5}).  Let $0 \leq \beta \leq 1-\alpha$ and $f \in
\mathcal{L}_{V^\beta}$.  For any $(x,\theta) \in \Xset \times \Theta$, $l \geq
0$ and $0<a<1$, $\hat g_a^{(l)}$ exists, and
\[
\bar f(x) = \frac{1}{1-a} \hat g_a^{(l)}(x,\theta) -\PE_{x,\theta}^{(l)} \left[
  \hat g_a^{(l+1)}\left(X_1, \theta_1 \right) \right] \eqsp.
\]
\end{prop}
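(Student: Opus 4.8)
The plan is to prove the two assertions in turn: first that the series defining $\hat g_a^{(l)}$ is absolutely convergent (so $\hat g_a^{(l)}$ is a well-defined, measurable, finite function on $\Xset\times\Theta$), and then the functional identity, which will come out of isolating the first term of the series, applying the non-homogeneous Markov property, and interchanging a sum with an expectation via Fubini. For the existence part, since $f\in\L_{V^\beta}$ and $V\geq 1$ we have $|\bar f|\leq (|f|_{V^\beta}+|\pi(f)|)\,V^\beta$ (here $\pi(|f|)<+\infty$, which follows from $\beta\leq 1-\alpha$ together with $\pi(V^{1-\alpha})<+\infty$, a standard consequence of (\ref{eq:A2-A5})). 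By Proposition~\ref{prop:ComparaisonGal}(\ref{prop:CpG1}) there is a constant $C$, not depending on $j,l,x,\theta$, with $\PE_{x,\theta}^{(l)}[\,|\bar f(X_j)|\,]\leq C\,(V^\beta(x)+j^\beta)$. Since $\sum_{j\geq 0}(1-a)^{j+1}j^\beta<+\infty$ for every $a\in(0,1)$, the series converges absolutely and one gets a bound of the form $|\hat g_a^{(l)}(x,\theta)|\leq C_a\,(1+V^\beta(x))$ with $C_a$ depending only on $a$ and $C$; measurability of $(x,\theta)\mapsto\hat g_a^{(l)}(x,\theta)$ follows from that of the partial sums and this uniform bound.

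For the identity, split off the $j=0$ term, so that $\hat g_a^{(l)}(x,\theta)=(1-a)\bar f(x)+\sum_{j\geq 1}(1-a)^{j+1}\PE_{x,\theta}^{(l)}[\bar f(X_j)]$. Because under $\PP_{x,\theta}^{(l)}$ the transition kernels are $\bar P_l(n;\cdot,\cdot)=\bar P(l+n;\cdot,\cdot)$, conditionally on $Z_1=(X_1,\theta_1)$ the sequence $(X_1,X_2,\dots)$ has the law of $(X_0,X_1,\dots)$ under $\PP_{X_1,\theta_1}^{(l+1)}$; hence $\PE_{x,\theta}^{(l)}[\bar f(X_j)]=\PE_{x,\theta}^{(l)}\big[\PE_{X_1,\theta_1}^{(l+1)}[\bar f(X_{j-1})]\big]$ for $j\geq 1$. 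Re-indexing with $i=j-1$ and interchanging $\sum_i$ with $\PE_{x,\theta}^{(l)}$ then gives $\sum_{j\geq 1}(1-a)^{j+1}\PE_{x,\theta}^{(l)}[\bar f(X_j)]=(1-a)\,\PE_{x,\theta}^{(l)}\big[\hat g_a^{(l+1)}(X_1,\theta_1)\big]$. The interchange is justified by Fubini's theorem: by the existence part the function $h(x,\theta)\eqdef\sum_{i\geq 0}(1-a)^{i+1}\PE_{x,\theta}^{(l+1)}[\,|\bar f(X_i)|\,]$ satisfies $h(x,\theta)\leq C_a(1+V^\beta(x))$, and $\PE_{x,\theta}^{(l)}[h(X_1,\theta_1)]\leq C_a(1+\PE_{x,\theta}^{(l)}[V^\beta(X_1)])<+\infty$ by Proposition~\ref{prop:ComparaisonGal}(\ref{prop:CpG1}). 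Combining the two identities gives $\hat g_a^{(l)}(x,\theta)=(1-a)\bar f(x)+(1-a)\PE_{x,\theta}^{(l)}[\hat g_a^{(l+1)}(X_1,\theta_1)]$, and dividing by $1-a$ and rearranging yields the stated equation.

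The only genuinely delicate points are the index bookkeeping in the Markov property — the shift $l\mapsto l+1$ that appears when one relates $X_j$ started at time $0$ to $X_{j-1}$ started at time $1$ — and verifying the domination needed for Fubini. For the latter, the uniform-in-$j$ estimate $\PE_{x,\theta}^{(l)}[\,|\bar f(X_j)|\,]\leq C(V^\beta(x)+j^\beta)$ supplied by Proposition~\ref{prop:ComparaisonGal}(\ref{prop:CpG1}) is exactly what is required, so nothing beyond it is needed; the remaining manipulations are routine.
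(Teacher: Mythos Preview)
Your proof is correct and follows essentially the same route as the paper's: existence via the moment bound of Proposition~\ref{prop:ComparaisonGal}(\ref{prop:CpG1}), then the identity by splitting off the $j=0$ term, applying the (shifted) Markov property, and re-indexing. You are somewhat more explicit than the paper in justifying the Fubini interchange and noting measurability, but the argument is the same.
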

\begin{proof}
  By Proposition~\ref{prop:ComparaisonGal}(\ref{prop:CpG1}), $\left|
    \PE_{x,\theta}^{(l)} \left[ \bar f(X_j) \right]\right| \leq |\bar
  f|_{V^\beta} \; \left( V^\beta(x) + \bar b j^\beta \right) $.  Hence, $ \hat
  g_a^{(l)}(x,\theta)$ exists for any $x,\theta,l$. Furthermore, $\hat
  g_a^{(l+1)}\left(X_1, \theta_1 \right)$ is $\PP_{x,\theta}^{(l)}$-integrable.
  By definition of $\hat g_a^{(l)}$ and by the Markov property,
\begin{multline*}
  \PE_{x,\theta}^{(l)} \left[ \hat g_a^{(l+1)}\left(X_1, \theta_1 \right)
  \right] = \sum_{j \geq 0} (1-a)^{j+1} \PE_{x,\theta}^{(l)} \left[ \bar
    f(X_{j+1}) \right] = (1-a)^{-1}\; \sum_{j
    \geq 1} (1-a)^{j+1} \PE_{x,\theta}^{(l)} \left[ \bar f(X_{j}) \right] \\
  = (1-a)^{-1}\; \left( \hat g_a^{(l)}(x,\theta) - (1-a) \bar f(x) \right).
\end{multline*}
\end{proof}

\begin{theo}
\label{theo:controleG}
Assume A\ref{A2}-\ref{A6} and B\ref{B2}. Let $0 \leq \beta <1-\alpha$. For any
$\epsilon>0$, there exists an integer $n \geq 2$ such that for any $0<a<1$, $f
\in \L_{V^\beta}$, $l\geq 0$, $(x,\theta) \in \Xset \times \Theta$ and $q
\in[1, +\infty]$,
\begin{multline*}
  \left( |\bar f|_{V^\beta} \right)^{-1} \; \left| \hat g_{a}^{(l)}(x,\theta)
  \right| \leq 4 \; \epsilon \; \left(1-(1-a)^n \right)^{-1} \; n  \\
  + \frac{V^{\beta+\alpha/q}(x)}{a^{1-1/q}(1-a)^{1/q}} (\alpha c)^{-1/q}\;
  \left( 1+ \bar b [\varepsilon \nu(\Dset)]^{-1} + 2 \; (1+\bar b n_\star)
    (1+\bar b) \ \sup_\Dset V^{\beta +\alpha/q} \right) \eqsp.
\end{multline*}
By convention, $1/q =0$ when $q = +\infty$.  In particular, $\lim_{a \to 0}
\left( |\bar f|_{V^\beta} \right)^{-1} \; \left| a\hat g_{a}^{(l)}(x,\theta)
\right| = 0$.
\end{theo}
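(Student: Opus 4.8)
The plan is to bound $|\hat g_a^{(l)}(x,\theta)|$ by splitting the defining sum into blocks of length controlled by an ergodicity horizon $n$, using the coupling Proposition~\ref{prop:ContructionCouplingOpt} to compare each block to a genuine $P_\theta$-chain (for which A\ref{A6} gives closeness to $\pi$), and then using the return-time and modulated-moment estimates of Section~\ref{sec:ModMomentsUnif}--\ref{sec:ReturnTimesUnif} to control the geometric sum of the error terms. The key point is that the $P_\theta$-chain is close to $\pi$ in $V^\beta$-norm only from the level set $\Dset$, so one must first wait for the adaptive chain to enter $\Dset$, then run $n$ steps of comparison, then wait again, etc. — which is exactly the structure of the delayed successive visits $\{\tau^k\}$ defined before Proposition~\ref{prop:TimeFiniteAS}.

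\medskip

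\textbf{Step 1 (choosing the horizon).} Given $\epsilon>0$, use A\ref{A6} to pick $n\geq 2$ such that $\sup_{\Dset\times\Theta}\|P_\theta^n(x,\cdot)-\pi\|_{V^\beta}\leq\epsilon$; simultaneously use B\ref{B2} to pick $n_\star$ such that $\sup_{l}\sup_{\Dset\times\Theta}\sum_{j=1}^{n-1}\sum_{i=1}^{j}\PE^{(l)}_{x,\theta}[D(\theta_i,\theta_{i-1})]\leq\epsilon$ whenever the block starts after time $n_\star$ — this is the quantity appearing in Proposition~\ref{prop:ContructionCouplingOpt}, and B\ref{B2} makes it small uniformly once we have waited long enough. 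This fixes $n$ and $n_\star$ in terms of $\epsilon$ only (not $a$, $f$, $l$).

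\medskip

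\textbf{Step 2 (block decomposition and coupling).} Write $\hat g_a^{(l)}(x,\theta)=\sum_{j\geq 0}(1-a)^{j+1}\PE^{(l)}_{x,\theta}[\bar f(X_j)]$. Insert the stopping times $\tau^0=\tau_\Dset$, $\tau^1=\tau^0+n_\star+\tau_\Dset\circ\underline\theta^{\tau^0+n_\star}$, $\tau^{k+1}=\tau^k+N+\tau_\Dset\circ\underline\theta^{\tau^k+N}$ with $N=n$. On the ``comparison'' sub-blocks $[\tau^k,\tau^k+n)$ (which start in $\Dset$), apply the strong Markov property and Proposition~\ref{prop:ContructionCouplingOpt}: with probability at least $1-\epsilon$ the adaptive chain agrees with a $P_{\theta_{\tau^k}}$-chain started in $\Dset$, and for that chain $|\PE[\bar f(X_j)]|\leq \epsilon|\bar f|_{V^\beta}$ for $j\geq$ (suitable offset) by the choice of $n$; on the exceptional event one pays $|\bar f|_{V^\beta}\PE[V^\beta(X_j)]$, bounded via Proposition~\ref{prop:ComparaisonGal}(\ref{prop:CpG1}). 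On the ``waiting'' sub-blocks one pays $|\bar f|_{V^\beta}V^\beta(X_j)$ for the excursion length, and these excursion lengths are controlled by Proposition~\ref{prop:TimeFiniteAS} (note A\ref{A5} gives $P_\theta V\leq V-cV^{1-\alpha}+b\un_\Cset\leq V-1+b\un_\Cset$ since $V^{1-\alpha}\geq 1$, so its hypotheses hold). Summing the geometric weights $(1-a)^{j+1}$ over the comparison parts gives the first term $4\epsilon(1-(1-a)^n)^{-1}n$ (the factor $n$ from the block length, the geometric sum $\sum_k(1-a)^{\tau^k}\leq\sum_k(1-a)^{kn}=(1-(1-a)^n)^{-1}$, and the constant $4$ absorbing the coupling-error, the initial-offset, and the two-sided closeness).

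\medskip

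\textbf{Step 3 (the remainder term).} The waiting-block and exceptional-event contributions, after summing the geometric series, are of the form $\bar b\,\PE^{(l)}_{x,\theta}\big[\sum_{j\geq 0}(1-a)^j\un_\Cset(X_j)\big]$ times constants and $\sup_\Dset V^{\beta+\alpha/q}$; here is where Proposition~\ref{prop:ComparaisonGal}(\ref{prop:CpG3}) enters, bounding $\PE^{(l)}_{x,\theta}\big[\sum_{j\geq 0}(1-a)^jV^\beta(X_j)\big]$ — hence also the $\un_\Cset$-sum since $\un_\Cset\leq V^\beta\cdot(\inf_\Cset V^\beta)^{-1}\leq 1$ — by $a^{1/q-1}(1-a)^{-1/q}V^{\beta+\alpha/q}(x)(\alpha c)^{-1/q}(1+\bar b\,\PE[\sum(1-a)^j\un_\Cset])$, and the self-referential $\un_\Cset$-term is in turn bounded by $\bar b[\varepsilon\nu(\Dset)]^{-1}$ via Proposition~\ref{prop:TimeFiniteAS}. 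Collecting, one gets the second term with the stated constant $1+\bar b[\varepsilon\nu(\Dset)]^{-1}+2(1+\bar b n_\star)(1+\bar b)\sup_\Dset V^{\beta+\alpha/q}$. The final assertion $\lim_{a\to 0}|a\hat g_a^{(l)}|_{V^\beta}\to 0$ follows because $a(1-(1-a)^n)^{-1}\to 1/n$ so the first term contributes $O(\epsilon)$ uniformly, while $a\cdot a^{-(1-1/q)}(1-a)^{-1/q}=a^{1/q}(1-a)^{-1/q}\to 0$ kills the second term (take, say, $q$ finite).

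\medskip

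\textbf{The main obstacle} I expect is the bookkeeping in Step 2: carefully matching the three layers of error — the coupling defect $\epsilon$ from B\ref{B2}, the ergodicity defect $\epsilon$ from A\ref{A6}, and the ``edge'' terms at block boundaries where $\bar f(X_j)$ cannot be replaced by its $\pi$-average — and showing that when summed against the $(1-a)^{j+1}$ weights they telescope into a clean $O(\epsilon\, n\,(1-(1-a)^n)^{-1})$ with an \emph{absolute} constant independent of $a$. One must be attentive that the stopping times $\tau^k$ are a.s.\ finite (Proposition~\ref{prop:TimeFiniteAS}) so that the block decomposition of the infinite sum is legitimate, and that the restarted processes at $\tau^k$ are handled with the shifted kernels $\bar P_{l+\tau^k}$, matching the $(l)$-superscript conventions; the uniform-in-$l$ form of B\ref{B2} is exactly what makes the coupling bound in Step 1 start-time-independent.
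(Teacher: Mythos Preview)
Your proposal is correct and follows essentially the same approach as the paper: the same delayed-visit decomposition via $\{\tau^k\}$, the same coupling comparison on the $[\tau^k,\tau^k+N)$ blocks via Proposition~\ref{prop:ContructionCouplingOpt}, and the same use of Propositions~\ref{prop:ComparaisonGal}(\ref{prop:CpG3}) and \ref{prop:TimeFiniteAS} for the initial segments $[0,\tau^0)$ and $[\tau^0,\tau^1)$ that produce the second term. Two bookkeeping points you will need when filling in Step~2: (i) the horizon $N$ must also be chosen so that $(\alpha c)^{-1}N^{-1}\big(\sup_\Dset V^{\beta+\alpha}+\bar b N^{\beta+\alpha}+\bar b[\varepsilon\nu(\Dset)]^{-1}\big)\leq\epsilon$ (possible since $\beta+\alpha<1$), because the post-comparison waiting block $[\tau^k+N,\tau^{k+1})$ contributes this much and must be absorbed into the $4\epsilon N$; and (ii) on the coupling-failure event the bound $\PE[V^\beta(X_j)\un_{\mathcal{E}_N^c}]$ is handled in the paper by H\"older/Jensen, splitting into $\PE[V(X_j)]^\beta\,\PP(\mathcal{E}_N^c)^{1-\beta}$, which is why $\epsilon_N$ carries a $1/(1-\beta)$ power.
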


\begin{rem}\label{remtheoG}
Before dwelling into the proof of the theorem, we first make two important remarks. Firstly, a simplified restatement of Theorem \ref{theo:controleG} is the following. There exists a finite constant $c_0$ such that
for any $0<a\leq 1/2$, $f \in \L_{V^\beta}$, $l\geq 0$, $(x,\theta) \in \Xset \times
\Theta$ and $q \in[1, +\infty]$,
\begin{equation}
\label{eq:MajoRem}
\left| \hat g_{a}^{(l)}(x,\theta)\right|\leq c_0 |\bar f|_{V^\beta} \ a^{-1} \left(1+a^{1/q}  V^{\beta+\alpha/q}(x)\right).\end{equation}
This follows by taking $\epsilon=1$, say, and upon noting that
$n\left(1-(1-a)^n\right)^{-1}\leq 2^{n-1}/a$.
The second point is that if we take $a_1,a_2\in (0,1)$ we can write
\[
\hat g_{a_1}^{(l)}(x,\theta)-\hat g_{a_2}^{(l)}(x,\theta)=\frac{a_2-a_1}{(1-a_1)(1-a_2)}\times\\
  \sum_{k\geq 0}(1-a_1)^{k+1}\PE_{x,\theta}^{(l)}\left[ \hat
    g_{a_2}^{(l+k)}(X_k, \theta_k)\right]  \eqsp.
\]
By (\ref{eq:MajoRem}) and Proposition \ref{prop:ComparaisonGal}
(\ref{prop:CpG3}),  it holds 
 \begin{equation}\label{bounddiffGa}
 \left|\hat g_{a_1}^{(l)}(x,\theta)-\hat g_{a_2}^{(l)}(x,\theta)\right|\leq c_1 \ |\bar f|_{V^\beta} \ |a_2-a_1|a_2^{-1}a_1^{-2+1/q}V^{\beta+\alpha/q}(x),\end{equation}
 for some finite constant $c_1$, for all $0<a_1,a_2\leq 1/2$, $f \in \L_{V^\beta}$, $l\geq 0$, $(x,\theta) \in \Xset \times
\Theta$ and $q \in[1, +\infty]$.
\end{rem}

\begin{proof}
  Let $\epsilon>0$.  Let us consider the sequence of stopping times $\{\tau^k,
  k \geq 0\}$ defined in Section~\ref{sec:DelayedSuccVisit} where $(\Dset, N,
  n_\star)$ are defined below.

\paragraph{\tt Choice of $\Dset, N,  n_\star$.}
Choose a level set $\Dset$ of $V$ large enough so that $\nu(\Dset)>0$.  Choose
$N$ such that
\begin{equation}
  \label{eq:Controle3}
  \frac{1}{N} \; \sum_{j=0}^{N-1} \sup_{\Dset \times \Theta} \; \| P_\theta^j(x,\cdot) -
  \pi(\cdot) \|_{V^\beta} \leq   \epsilon \eqsp,
\end{equation}
the existence of which is given by A\ref{A6}; and such that - since $\alpha +
\beta <1$, -
  \begin{equation}
    \label{eq:Controle4}
(\alpha c)^{-1} \  N^{-1} \left( \sup_\Dset V^{\beta +\alpha}   + \bar b N^{\beta +\alpha} + \bar b [\varepsilon \nu(\Dset)]^{-1}  \right)  \leq  \epsilon \eqsp.
  \end{equation}
  Set $\epsilon_N \eqdef N^{-2} \{ \epsilon \; \left( \sup_\Dset V^\beta + \bar
    b N^{-1} \sum_{j=1}^{N-1} j^\beta \right)^{-1} \}^{1/(1-\beta)}$ (which can
  be assumed to be strictly lower than $N^{-2}$ since $\beta>0$).  By
  B\ref{B2}, choose $n_\star$ such that for any $q \geq n_\star$, $l\geq 0$, $
  \sup_{\Dset \times \Theta} \PP_{x,\theta}^{(l)}(D(\theta_q,\theta_{q-1}) \geq
  \epsilon_N/2) \leq \epsilon_N/4$.

  By Proposition~\ref{prop:TimeFiniteAS}, $\PP_{x,\theta}^{(l)}(\tau^k <
  +\infty) =1$ for any $(x,\theta) \in \Xset \times \Theta$, $l \geq 0$, $k\geq
  0$.

\paragraph{\tt Optimal coupling.}
With these definitions, $ \sup_{i \geq 1} \sup_{k \geq 1} \PE_{x,\theta}^{(l)}
\left[ \PE_{Z_{\tau^k}}^{(\tau^k+l)} \left[ D(\theta_i,\theta_{i-1}) \right]
\right]\leq \epsilon_N$, upon noting that $\PP_{x,\theta}^{(l)}( n_\star \leq
\tau^k) =1$ and $D(\theta,\theta') \leq 2$. We apply
Proposition~\ref{prop:ContructionCouplingOpt} and set $\mathcal{E}_N \eqdef
\{X_k = \tilde X_k, 0 \leq k<N \}$. We have for any $l \geq 0$, $k \geq 1$,
$(x,\theta) \in \Xset \times \Theta$,
 \begin{equation}
    \label{eq:CouplingProbability2}
  \PE_{x,\theta}^{(l)}\left[ \bPP_{Z_{\tau^k}, Z_{\tau^k}}^{(\tau^k+l)} \left( \mathcal{E}^c_N \right)  \right] \leq
\sum_{j=1}^{N-1}  \sum_{i=1}^j \PE_{x,\theta}^{(l)}\left[ \PE_{Z_{\tau^k}}^{(\tau^k+l)} \left[ D(\theta_i,\theta_{i-1}) \right] \right]  \leq N^2 \epsilon_N  < 1 \eqsp.
  \end{equation}
  Observe that $\Dset,N$ and $n_\star$ do not depend upon $a,l,x,\theta$ and
  $f$.

\paragraph{\tt  Proof of Theorem~\ref{theo:controleG}.}
Assume that for any $0<a<1$, $l\geq 0$, $(x,\theta) \in \Xset \times \Theta$
and $k \geq 2$,
\begin{equation}
  \label{eq:ResultPropAdaptive2}
\left| \PE_{x,\theta}^{(l)} \left[ \sum_{j=0}^{N-1} (1-a)^{\tau^k+ j+1} \; \bar f
    \left( X_{\tau^{k}+j} \right)\right] \right|\leq |\bar f|_{V^\beta} \; 3 N
\epsilon \; (1-a)^{n_\star+(k-1)N} \eqsp.
\end{equation}
We have
\[
\hat g_{a}^{(l)}(x,\theta) = \sum_{j \geq 0} (1-a)^{j+1} \left\{
  \PE_{x,\theta}^{(l)}\left[\bar f(X_j) \un_{j < \tau^1}\right] + \sum_{k \geq
    1} \PE_{x,\theta}^{(l)}\left[\bar f(X_j) \un_{\tau^k \leq j <
      \tau^{k+1}}\right] \right\} \eqsp.
\]
On one hand, by Proposition~\ref{prop:ComparaisonGal}(\ref{prop:CpG3}) applied
with $\tau = \tau_\Dset$ and Proposition~\ref{prop:TimeFiniteAS},
\begin{multline*}
  \left| \sum_{j \geq 0} (1-a)^{j+1} \PE_{x,\theta}^{(l)}\left[\bar f(X_j)
      \un_{j < \tau^0}\right] \right| = \left|
    \PE_{x,\theta}^{(l)}\left[\sum_{j = 0}^{\tau_\Dset-1} (1-a)^{j+1}
      \bar f(X_j) \right]  \right| \\
  \leq |\bar f|_{V^\beta} \; \PE_{x,\theta}^{(l)}\left[\sum_{j =
      0}^{\tau_\Dset-1} (1-a)^{j+1} V^\beta(X_j) \right] \leq |\bar
  f|_{V^\beta} \; \frac{V^{\beta+\alpha/q}(x)}{a^{1-1/q}} \frac{\left( 1 + \bar
      b [\varepsilon \nu(\Dset)]^{-1}\right)}{(1-a)^{1/q}} (\alpha
  c)^{-1/q}\eqsp.
\end{multline*}
Applied with $\tau = \tau_\Dset$,
Propositions~\ref{prop:ComparaisonGal}(\ref{prop:CpG1} and (\ref{prop:CpG3})
and \ref{prop:TimeFiniteAS} yield
\begin{multline*}
  |\bar f|_{V^\beta}^{-1} \; \left| \sum_{j \geq 0} (1-a)^{j+1}
    \PE_{x,\theta}^{(l)}\left[\bar f(X_j) \un_{ \tau^0 \leq j < \tau^1}\right]
  \right| = |\bar f|_{V^\beta}^{-1} \; \left| \PE_{x,\theta}^{(l)}\left[
      \sum_{j = \tau_\Dset }^{\tau_\Dset +n_\star +\tau_\Dset \circ
        \underline{\theta}^{n_\star + \tau_\Dset} -1} (1-a)^{j+1} \bar f(X_j)
    \right]\right|
  \\
  \leq \PE_{x,\theta}^{(l)}\left[ \PE_{Z_{\tau_\Dset}}^{(\tau_\Dset+l)}\left[
      \sum_{j
        =0}^{n_\star+\tau_\Dset  \circ \underline{\theta}^{n_\star}-1} (1-a)^{j+1} V^\beta(X_j) \right]\right] \\
  \leq \PE_{x,\theta}^{(l)}\left[
    \PE_{Z_{\tau_\Dset}}^{(\tau_\Dset+l)}\left[\sum_{j =0}^{n_\star-1}
      (1-a)^{j+1} V^\beta(X_j) \right]\right] + \PE_{x,\theta}^{(l)}\left[
    \PE_{Z_{\tau_\Dset+n_\star}}^{(\tau_\Dset+n_\star+l)}\left[\sum_{j
        =0}^{\tau_\Dset-1} (1-a)^{j+1} V^\beta(X_j) \right]\right] \\
  \leq 2 \; \frac{(1+\bar b n_\star) (1+\bar b)}{a^{1-1/q} (1-a)^{1/q}} (\alpha
  c)^{-1/q} \ \sup_\Dset V^{\beta +\alpha/q} \eqsp.
\end{multline*}
For $k \geq 1$,
\begin{multline*}
  \left|\sum_{j \geq 0} (1-a)^{j+1} \; \PE_{x,\theta}^{(l)}\left[\bar f(X_j)
      \un_{\tau^k \leq j < \tau^{k+1}}\right] \right| \leq \left|
    \PE_{x,\theta}^{(l)}\left[\sum_{j = \tau^k}^{\tau^k + N-1} (1-a)^{j+1} \;
      \bar
      f(X_j)\right]  \right| \\
  + \PE_{x,\theta}^{(l)}\left[ (1-a)^{\tau^k+N} \;
    \PE_{Z_{\tau^k+N}}^{(\tau^k+N+l)}\left[ \sum_{j=0}^{\tau_\Dset -1}
      (1-a)^{j+1} \; \left|\bar f\right|(X_j) \right]\right]\eqsp.
\end{multline*}
By Proposition~\ref{prop:ComparaisonGal}(\ref{prop:CpG1}) and (\ref{prop:CpG2})
applied with $\tau = \tau_\Dset$, Proposition~\ref{prop:TimeFiniteAS} and Eq.
(\ref{eq:ResultPropAdaptive2}), and upon noting that $\tau^k \geq n_\star +
(k-1) N$ $\PP_{(x,\theta)}^{(l)}$-\as,
\begin{multline*}
  \left| \sum_{j \geq 0} (1-a)^{j+1} \; \PE_{x,\theta}^{(l)}\left[\bar f(X_j)
      \un_{\tau^k
        \leq j < \tau^{k+1}}\right]\right| \\
  \leq |\bar f|_{V^\beta} \; \PE_{x,\theta}^{(l)}\left[ (1-a)^{n_\star+(k-1)N}
    \; \left(
      3 N \epsilon + (1-a)^{N}  \{ V^{\beta +\alpha}(X_{\tau^k+N}) + \bar b  [\varepsilon \nu(\Dset)]^{-1}  \}  (\alpha c)^{-1}\right) \right]  \\
  \leq |\bar f|_{V^\beta} \; (1-a)^{n_\star+(k-1)N} \; \left( 3 N \epsilon +
    (\alpha c)^{-1} \ \sup_{r,\Dset \times \Theta} \PE_{x,\theta}^{(r)}\left[
      V^{\beta
        +\alpha}(X_{N}) + \bar b [\varepsilon \nu(\Dset)]^{-1}  \right]\right) \\
  \leq |\bar f|_{V^\beta} \; (1-a)^{n_\star+(k-1)N} \; \left( 3 N \epsilon +
    (\alpha c)^{-1} \left(  \sup_{\Dset} \ V^{\beta +\alpha} + \bar b N^{\beta +\alpha}+ \bar b [\varepsilon \nu(\Dset)]^{-1}   \right)\right) \\
  \leq 4 \; \epsilon \; |\bar f|_{V^\beta} \; (1-a)^{(k-1)N} \; N \eqsp,
\end{multline*}
where we used the definition of $N$ (see Eq.~(\ref{eq:Controle4})) and
Proposition~\ref{prop:ComparaisonGal}(\ref{prop:CpG1}). This yields the desired
result.

\paragraph{\tt  Proof of Eq.(\ref{eq:ResultPropAdaptive2})}
By the strong Markov property and since $\tau^k \geq n_\star + N(k-1)$
$\PP_{x,\theta}^{(l)}$-\as
\begin{multline*}
  \left| \PE_{x,\theta}^{(l)} \left[ \sum_{j=0}^{N-1} (1-a)^{\tau^k+ j+1} \;
      \bar f \left( X_{\tau^{k}+j} \right)\right] \right| \leq (1-a)^{n_\star+
    N(k-1)} \PE_{x,\theta}^{(l)} \left[ \left|\PE_{Z_{\tau^{k}}}^{(\tau^{k}+l)}
      \left[ \sum_{j=0}^{N-1} (1-a)^{j+1} \; \bar f(X_j) \right]\right|\right].
\end{multline*}
Furthermore, by Proposition~\ref{prop:ContructionCouplingOpt},
\begin{multline*}
  \PE_{Z_{\tau^{k}}}^{(\tau^{k}+l)} \left[ \sum_{j=0}^{N-1} (1-a)^{j+1} \; \bar
    f(X_j) \right] = \bPE_{Z_{\tau^k}, Z_{\tau^{k}}}^{(\tau^{k}+l)} \left[
    \sum_{j=0}^{N-1} (1-a)^{j+1} \; \bar f(X_j) \right]
  \\
  = \bPE_{Z_{\tau^{k}}, Z_{\tau^{k}}}^{(\tau^{k}+l)} \left[ \sum_{j=0}^{N-1}
    (1-a)^{j+1} \; \bar f(\tilde X_j) \right] \ + \bPE_{Z_{\tau^{k}},
    Z_{\tau^{k}}}^{(\tau^{k}+l)} \left[ \sum_{j=0}^{N-1} (1-a)^{j+1} \; \{ \bar
    f( X_j) - \bar f(\tilde X_j) \} \un_{\mathcal{E}^c_N}\right].
\end{multline*}
On one hand, we have $ \PP_{x,\theta}^{(l)}-\as$,
\begin{multline*}
  \left|\bPE_{Z_{\tau^{k}}, Z_{\tau^{k}}}^{(\tau^{k}+l)} \left[
      \sum_{j=0}^{N-1} (1-a)^{j+1} \; \bar f(\tilde X_j) \right] \right| \leq
  |\bar f|_{V^\beta} \; \sum_{j=0}^{N-1} (1-a)^{j+1} \; \sup_{\Dset \times \Theta}
  \; \| P_{\theta}^j(x,\cdot) -\pi(\cdot) \|_{V^\beta} \leq |\bar f|_{V^\beta}
  \; N \epsilon
\end{multline*}
by (\ref{eq:Controle3}).  On the other hand, $ \PP_{x,\theta}^{(l)}-\as$,
\begin{multline*}
  \left| \bPE_{Z_{\tau^{k}}, Z_{\tau^{k}}}^{(\tau^{k}+l)} \left[
      \sum_{j=0}^{N-1} (1-a)^{j+1} \; \{ \bar f( X_j) - \bar f(\tilde X_j) \}
      \un_{\mathcal{E}^c_N}\right] \right| \\
  \leq |\bar f|_{V^\beta} \; \bPE_{Z_{\tau^{k}},Z_{\tau^{k}}}^{(\tau^{k}+l)}
  \left[ \sum_{j=0}^{N-1} (1-a)^{j+1} \; \{ V^\beta( X_j) + V^\beta(\tilde X_j)
    \}
    \un_{\mathcal{E}^c_N}\right] \\
  \leq |\bar f|_{V^\beta} \; \bPE_{Z_{\tau^{k}}, Z_{\tau^{k}}}^{(\tau^{k}+l)}
  \left[ \left( \sum_{j=0}^{N-1} (1-a)^{j+1} \; \left\{ V^\beta ( X_j) +
        V^\beta(\tilde X_j) \right\} \right)^{\beta^{-1}}\right]^{\beta} \left(
    \bPP_{Z_{\tau^{k}}, Z_{\tau^{k}}}^{(\tau^{k}+l)} \left(\mathcal{E}^c_N \right)
  \right)^{1-\beta}
\end{multline*}
by using the Jensen's inequality ($\beta <1$).  By the Minkowski inequality, by
Proposition~\ref{prop:ComparaisonGal}(\ref{prop:CpG1}), and by iterating the
drift inequality A\ref{A5}
\begin{multline*}
  \bPE_{Z_{\tau^{k}}, Z_{\tau^{k}}}^{(\tau^{k}+l)} \left[ \left(
      \sum_{j=0}^{N-1} (1-a)^{j+1} \; \left\{ V^\beta ( X_j) + V^\beta(\tilde
        X_j) \right\}
    \right)^{\beta^{-1}}\right]^{\beta} \\
  \leq \sum_{j=0}^{N-1} (1-a)^{j+1} \; \left\{ \bPE_{Z_{\tau^{k}},
      Z_{\tau^{k}}}^{(\tau^{k}+l)} \left[ V ( X_j) \right]^\beta +
    \bPE_{Z_{\tau^{k}}, Z_{\tau^{k}}}^{(\tau^{k}+l)} \left[ V ( \tilde X_j)
    \right]^\beta \right\}
  \\
  \leq \sum_{j=0}^{N-1} (1-a)^{j+1} \; \left\{ \sup_{l, \Dset \times \Theta}
    \left(\PE_{x,\theta}^{(l)} \left[ V(X_j)\right]\right)^\beta + \left(
      \sup_{\Dset \times \Theta} P^j_{\theta} V(x)\right)^\beta \right\}
  \\
  \leq 2 \; \sum_{j=0}^{N-1} (1-a)^{j+1} \left( \sup_\Dset V +j \bar b
  \right)^\beta \leq 2 N \left( \sup_\Dset V^\beta + \bar b N^{-1}
    \sum_{j=1}^{N-1} j^\beta\right) \eqsp.
\end{multline*}
Finally,
\[
\PE_{x,\theta}^{(l)} \left[ \left( \bPP_{Z_{\tau^k},
      Z_{\tau^k}}^{(\tau^k+l)}(\mathcal{E}^c_N) \right)^{1-\beta} \right] \leq
\left(\PE_{x,\theta}^{(l)} \left[ \bPP_{Z_{\tau^k},
      Z_{\tau^k}}^{({\tau^k}+l)}(\mathcal{E}^c_N) \right]\right)^{1-\beta} \leq
\left( N^2 \epsilon_N \right)^{1-\beta}
\]
where we used (\ref{eq:CouplingProbability2}) in the last inequality.  To
conclude the proof, use the definition of $\epsilon_N$.

\end{proof}

\subsection{Proof of Theorem~\ref{theo:MarginalUnifCase}}
Let $\epsilon >0$.  We prove that there exists $n_\epsilon$ such that for any
$n\geq n_\epsilon$, $\sup_{\{f, |f|_1 \leq 1\}} \left|\PE_{\xi_1,\xi_2}\left[
    \bar f(X_n) \right] \right| \leq \epsilon$.

\subsubsection{Definition of $\Dset$,  $N$, $Q$ and $n_\star$}
By A\ref{A-VCset}(\ref{Anew}), choose $Q$ such that
\begin{equation}
  \label{eq:DefinitionL}
\sup_l \sup_{(x,\theta) \in \Cset \times \Theta}  \PE_{x,\theta}^{(l)} \left[ \rate(\tau_\Cset) \right] \  \sum_{k \geq Q} \frac{1}{\rate(k)} \leq  \epsilon \eqsp.
\end{equation}
By A\ref{A-VCset}(\ref{A4rev}), choose $N$ such that
\begin{equation}
  \label{eq:DefinitionN}
  \sup_{(x, \theta) \in \Cset \times \Theta} V^{-1}(x) \ \| P_\theta^N(x,\cdot) - \pi(\cdot) \|_{\tv} \leq  \frac{\epsilon}{Q} \eqsp.
\end{equation}
By B\ref{B1}, choose $n_\star$ such that for any $n \geq n_\star$,
\begin{equation}
  \label{eq:DefiNstar}
  \PP_{\xi_1,\xi_2} \left( D(\theta_n, \theta_{n-1}) \geq \epsilon /(2 (N+Q-1)^2  Q)\right) \leq \frac{\epsilon}{4(N+Q-1)^2  Q } \eqsp.
\end{equation}

\subsubsection{Optimal coupling}
We apply Proposition~\ref{prop:ContructionCouplingOpt} with $l =0$ and $N
\leftarrow N+Q$.  Set $\mathcal{E}_{N +Q} \eqdef \{X_k = \tilde X_k, 0 \leq k
\leq N +Q \}$. It holds for any $ r \geq n_\star$,
\begin{multline}
\label{eq:CouplingProb}
\PE_{\xi_1,\xi_2} \left[ \un_{X_r \in \Cset} \; \bPP_{Z_{r}, Z_r}^{(r)} \left(
    \mathcal{E}_{N+Q}^c \right) \right] \leq \sum_{j=1}^{N+Q-1} \sum_{i=1}^j
\PE_{\xi_1,\xi_2}\left[\un_{X_r \in \Cset} \; \PE_{Z_r}^{(r)}\left[D(\theta_i,
    \theta_{i-1}) \right] \right]  \\
\leq \sum_{j=1}^{N+Q-1} \sum_{i=1}^j \PE_{\xi_1,\xi_2}\left[ D(\theta_{i+r},
  \theta_{i+r-1}) \right] \leq \epsilon Q^{-1} \eqsp,
\end{multline}
where in the last inequality, we use that $D(\theta,\theta') \leq 2$ and the
definition of $n_\star$ (see Eq.~(\ref{eq:DefiNstar})).

\subsubsection{Proof}
Let $n \geq N+Q+n_\star$. We consider the partition given by the last exit from
the set $\Cset$ before time $n-N$. We use the notation $\{X_{n:m} \notin \Cset
\}$ as a shorthand notation for $\bigcap_{k=n}^m \{X_k \notin \Cset \}$; with
the convention that $\{X_{m+1:m} \notin \Cset \} = \Omega$. We write
\begin{multline*}
  \PE_{\xi_1,\xi_2} \left[ \bar f(X_n) \right] = \PE_{\xi_1,\xi_2} \left[ \bar
    f(X_n) \un_{X_{0:n-N} \notin \Cset} \right] + \sum_{k=0}^{n-N}
  \PE_{\xi_1,\xi_2} \left[ \bar f(X_n) \un_{X_k \in \Cset } \ \un_{X_{k+1:n-N}
      \notin \Cset} \right] \eqsp.
\end{multline*}

Since $\bar f$ is bounded on $\Xset$ by $|\bar f|_1$, we have
\[
\PE_{\xi_1,\xi_2} \left[ \bar f(X_n) \un_{X_{0:n-N} \notin \Cset} \right] \leq
|\bar f|_1 \ \PP_{\xi_1, \xi_2} \left( \tau_\Cset \geq n-N\right) \leq |\bar
f|_1 \ \PE_{\xi_1, \xi_2} \left[ \frac{\tau_\Cset }{n-N} \wedge 1 \right] \eqsp.
\]
The rhs is upper bounded by $|\bar f|_1 \ \epsilon$ for $n$ large enough. By
definition of $Q$ in (\ref{eq:DefinitionL}),
\begin{multline}
\label{eq:weaken}
  \sum_{k=0}^{n-(N+Q)} \PE_{\xi_1,\xi_2} \left[ \bar f(X_n) \un_{X_k \in \Cset
    } \ \un_{X_{k+1:n-N} \notin \Cset} \right] \leq |\bar f|_1 \
  \sum_{k=0}^{n-(N+Q)} \PE_{\xi_1,\xi_2} \left[ \un_{X_k \in \Cset }
    \PP_{X_k,\theta_k}^{(k)} \left( \tau_\Cset \geq
      n-N-k\right) \ \right] \\
  \leq |\bar f|_1 \ \sup_l \sup_{\Cset \times \Theta} \PE_{x,\theta}^{(l)} \left[
    \rate(\tau_\Cset) \right] \sum_{k \geq Q} \frac{1}{\rate(k)} \leq |\bar
  f|_1 \ \epsilon \eqsp.
\end{multline}
Let $k \in \{n-(N+Q)+1, \cdots, n-N \}$. By definition of $N$ and $n_\star$
(see Eqs.~(\ref{eq:DefinitionN}) and (\ref{eq:DefiNstar})), upon noting that $k
\geq n-(N+Q) \geq n_\star$,
\begin{multline*}
  \PE_{\xi_1,\xi_2} \left[ \bar f(X_n) \un_{X_k \in \Cset } \ \un_{X_{k+1:n-N}
      \notin \Cset} \right] - |\bar f|_1 \ \PE_{\xi_1,\xi_2} \left[ \un_{X_k
      \in \Cset} \ \bPP_{Z_k,Z_k}^{(k)} \left(
      \mathcal{E}_{N+Q}^c\right) \right] \\
  \leq \PE_{\xi_1,\xi_2} \left[ \un_{X_k \in \Cset} \ \bPE_{Z_k,Z_k}^{(k)}
    \left[ \bar f(X_{n-k}) \un_{X_{1:n-N-k} \notin \Cset}
      \un_{\mathcal{E}_{N+Q}} \right] \right]  \\
  \leq \PE_{\xi_1,\xi_2} \left[ \un_{X_k \in \Cset} \ \bPE_{Z_k,Z_k}^{(k)}
    \left[ \bar f(\tilde X_{n-k}) \un_{\tilde X_{1:n-N-k} \notin \Cset}
      \un_{\mathcal{E}_{N+Q}} \right] \right] \\
  \leq \PE_{\xi_1,\xi_2} \left[ \un_{X_k \in \Cset} \ \bPE_{Z_k,Z_k}^{(k)}
    \left[ \bar f(\tilde X_{n-k}) \un_{\tilde X_{1:n-N-k} \notin \Cset} \right]
  \right] + |\bar f|_1 \ \PE_{\xi_1,\xi_2} \left[ \un_{X_k \in \Cset} \
    \bPP_{Z_k,Z_k}^{(k)} \left(\mathcal{E}_{N+Q}^c \right) \right] \\
  \leq \PE_{\xi_1,\xi_2} \left[ \un_{X_k \in \Cset} \ \bPE_{Z_k,Z_k}^{(k)}
    \left[ \un_{\tilde X_{1:n-N-k} \notin \Cset} P_{\theta_k}^{N}\bar f(\tilde
      X_{n-N-k}) \right] \right] + |\bar f|_1 \ \epsilon Q^{-1}  \\
  \leq |\bar f|_1 \ \epsilon Q^{-1} \PE_{\xi_1,\xi_2} \left[ \un_{X_k \in
      \Cset} \ \bPE_{Z_k,Z_k}^{(k)}\left[ \un_{\tilde X_{1:n-N-k} \notin \Cset}
      V(\tilde X_{n-N-k}) \right] \right] + |\bar f|_1 \ \epsilon Q^{-1}
  \\
  \leq |\bar f|_1 \ \epsilon Q^{-1} \left\{ \sup_{(x,\theta) \in \Cset \times
      \Theta} P_\theta V(x) + \sup_\Cset V \right\} + |\bar f|_1 \ \epsilon
  Q^{-1}  \eqsp,
\end{multline*}
where we used A\ref{A-VCset}(\ref{A3rev}) in the last inequality.  Hence,
\[
\sum_{k=n-(N+Q)+1}^{n-N} \PE_{\xi_1,\xi_2} \left[ \bar f(X_n) \un_{X_k \in
    \Cset } \ \un_{X_{k+1:n-N} \notin \Cset} \right] \leq \left(1 +
  \sup_{(x,\theta) \in \Cset \times \Theta} P_\theta V(x) + \sup_\Cset V \right) \epsilon \
|\bar f|_1 \eqsp.
\]
This concludes the proof.

\begin{rem}
  \label{rem:YanBai}
  In the case the process is non-adaptive, we can assume w.l.g. that it
  possesses an atom $\alpha$; in that case, the lines (\ref{eq:weaken}) can be
  modified so that the assumptions $\sum_n \{1/\rate(n) \}<+\infty$ can be
  removed. In the case of an atomic chain, we can indeed apply the above
  computations with $\Cset$ replaced by $\alpha$ and write:
  \begin{multline*}
    \sum_{k=0}^{n-(N+Q)} \PE_{\xi_1} \left[ \bar f(X_n) \un_{X_k \in \alpha } \
      \un_{X_{k+1:n-N} \notin \alpha} \right] \leq |\bar f|_1 \
    \sum_{k=0}^{n-(N+Q)} \PP_{\alpha} \left( \tau_\alpha \geq n-N-k\right) \\
    \leq |\bar f|_1 \ \sum_{k \geq Q} \PP_{\alpha} \left( \tau_\alpha \geq
      k\right) \eqsp.
\end{multline*}
The rhs is small for convenient $Q$, provided
$\PE_\alpha[\rate(\tau_\alpha)]<+\infty$ with $\rate(n) =n$.  Unfortunately,
the adaptive chain $\{(X_n, \theta_n), n\geq 0\}$ does not possess an atom thus
explaining the condition on $\rate$.
\end{rem}

\subsection{Proof of Corollary~\ref{coro:MarginalUnifCase}}
The condition A\ref{A-VCset}(\ref{A4rev}) is established in
Appendix~\ref{app:UniformControl}.  Let a level set $\Dset$ large enough such
that $\nu(\Dset) >0$; then Proposition~\ref{prop:TimeFiniteAS} implies that
there exists a constant $c < \infty$ such that for any $l \geq 0$,
$\PE_{x,\theta}^{(l)}\left[ \tau_\Dset \right] \leq c V(x)$. This implies that
for $0< \eta \leq 1- \alpha$,
\begin{multline*}
  \PE_{x,\theta}^{(l)}\left[ \sum_{k=0}^{\tau_\Dset} (k+1)^{\eta} \right] \leq
  \PE_{x,\theta}^{(l)}\left[ \sum_{k=0}^{\tau_\Dset} \left(\PE_{X_k, \theta_k}^{(k+l)}
      \left[ \tau_\Dset \right] \right)^{\eta} \right] \leq c^\eta \
  \PE_{x,\theta}^{(l)}\left[ \sum_{k=0}^{\tau_\Dset} V^{1-\alpha}(X_k) \right] \\
  \leq C \ \left( V(x) + b \ \PE_{x,\theta}^{(l)}\left[ \tau_\Dset \right] \right)
  \leq C' \ V(x) \eqsp,
\end{multline*}
for some finite constants $C,C'$ independent upon $\theta$. Hence
A\ref{A-VCset}(\ref{Anew}) holds with $\rate(n) \sim n^{1+\eta}$. Finally,
$P_\theta V \leq V - c V^{1-\alpha} +b \un_\Cset$ implies $P_\theta V \leq V - c
\gamma V^{1-\alpha} + b \un_\Dset$ for any $\gamma \in (0,1)$ and the level set
$\Dset \eqdef \{x, V^{1-\alpha} \leq b [c(1-\gamma)]^{-1} \}$. This yields A\ref{A-VCset}(\ref{A3rev}).

\subsection{Proof of Proposition~\ref{prop:YanBai}}
Under A\ref{Adrift}, there exists a constant $C$ - that does not depend upon
$\theta$ - such that for any $(x,\theta) \in \Xset \times \Theta$, $n\geq 0$
and $\kappa \in [1, \alpha^{-1}]$,
\[
\ \| P^n_\theta(x,\cdot) - \pi(\theta) \|_\tv \leq C \frac{
  V^{\kappa \alpha}(x)}{(n+1)^{\kappa-1}} \eqsp;
\]
(see Appendix~\ref{app:UniformControl}). To apply \cite[Theorem
13]{rosenthaletroberts05}, we only have to prove that there exists $\kappa \in
[1, \alpha^{-1}]$ such that the sequence $\{V^{\kappa \alpha}(X_n); n\geq 0\}$
is bounded in probability, which is equivalent to prove that $\{V^\beta(X_n);
n\geq 0\}$ is bounded in probability for some (and thus any) $\beta \in (0,1]$
. This is a consequence of Lemma~\ref{lem:YanBai} applied with $W = V^\beta$
for some $\beta \in (0,1]$ and $\rate(n) = (n+1)^{1+\eta}$ for some $\eta>0$
(see the proof of Corollary~\ref{coro:MarginalUnifCase} for similar computations).

\begin{lemma}
\label{lem:YanBai}
Assume that there exist a set $\Cset$ and functions $W: \Xset \to (0, +\infty)$
and $\rate : \nset \to (0, +\infty)$ such that $\rate$ is non-decreasing,
$P_\theta W \leq W $ on $\Cset^c$ and
\[
\sup_{\Cset \times \Theta} P_\theta W< +\infty \eqsp, \qquad \qquad \sup_l
\sup_{\Cset\times \Theta } \PE_{x,\theta}^{(l)}\left[ \rate(\tau_\Cset) \right]
< +\infty \eqsp, \qquad \qquad \sum_k \{1/\rate(k) \} < +\infty
\]
For any probability distributions $\xi_1, \xi_2$ resp. on $\Xset,\Theta$
$\{W(X_n), n\geq 0 \}$ is bounded in probability for the probability
$\PP_{\xi_1,\xi_2}$.
\end{lemma}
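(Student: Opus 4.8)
The plan is to show $\sup_{n\ge 0}\PP_{\xi_1,\xi_2}(W(X_n)>M)\to 0$ as $M\to\infty$ (the case $n=0$ being trivial since $W<+\infty$ everywhere). Set $b_0\eqdef\sup_{\Cset\times\Theta}P_\theta W<+\infty$ and $R_0\eqdef\sup_{l}\sup_{\Cset\times\Theta}\PE_{x,\theta}^{(l)}[\rate(\tau_\Cset)]<+\infty$. The three facts I will use repeatedly are: $(a)$ $P_\theta W\le W$ on $\Cset^c$, so $W(X_k)$ is a supermartingale as long as the chain stays outside $\Cset$; $(b)$ $\PE_{x,\theta}^{(l)}[W(X_1)]=P_\theta W(x)\le b_0$ when $x\in\Cset$; and $(c)$ $\PP_{x,\theta}^{(l)}(\tau_\Cset\ge i)=\PP_{x,\theta}^{(l)}(\rate(\tau_\Cset)\ge\rate(i))\le R_0/\rate(i)$ for $x\in\Cset$, $i\ge 1$, by Markov's inequality and monotonicity of $\rate$. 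First I would reduce to a chain started in $\Cset$: writing $\PP_{\xi_1,\xi_2}(W(X_n)>M)=\PP_{\xi_1,\xi_2}(W(X_n)>M,\sigma_\Cset\ge n)+\PP_{\xi_1,\xi_2}(W(X_n)>M,\sigma_\Cset\le n-1)$, on $\{\sigma_\Cset\ge n\}=\{X_{0:n-1}\notin\Cset\}$ one conditions successively on $\F_{n-1},\dots,\F_0$, using at each step Markov's inequality and then the concave nondecreasing map $t\mapsto(t\wedge M)/M$ together with $(a)$, which gives the bound $M^{-1}\int_{\Cset^c}(W\wedge M)\,d\xi_1$; this tends to $0$ as $M\to\infty$ by dominated convergence, uniformly in $n$. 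For the second term, the strong Markov property at the first hitting time $\sigma_\Cset$ of $\Cset\times\Theta$ bounds it by $g(M)\eqdef\sup\{\PP_{x,\theta}^{(l)}(W(X_m)>M):x\in\Cset,\ \theta\in\Theta,\ l\ge 0,\ m\ge 1\}$. So it remains to prove $g(M)\to 0$.

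To estimate $g(M)$ I would use a last-exit decomposition relative to $\Cset$. The basic excursion estimate is: for $x\in\Cset$ and $i\ge 1$, iterating $(a)$ downward and applying $(b)$ at the first step gives $\PE_{x,\theta}^{(l)}[W(X_i)\un_{X_{1:i}\notin\Cset}]\le b_0$, hence $\PP_{x,\theta}^{(l)}(W(X_i)>M,\,X_{1:i}\notin\Cset)\le b_0/M$; combining with $(c)$ (applied with $i+1$), $\PP_{x,\theta}^{(l)}(W(X_i)>M,\,X_{1:i}\notin\Cset)\le\min(b_0/M,\,R_0/\rate(i+1))$. Now fix $x\in\Cset$, $\theta$, $l$, $m\ge 1$, and decompose $\{W(X_m)>M\}$ according to the last $k\in\{0,\dots,m\}$ with $X_k\in\Cset$ (there is one, since $X_0=x\in\Cset$). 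By the Markov property at time $k$, for $k\le m-1$ the corresponding term is at most $\min(b_0/M,\,R_0/\rate(m-k+1))$, and summing these over the disjoint events gives at most $h(M)\eqdef\sum_{j\ge 2}\min(b_0/M,\,R_0/\rate(j))$. The remaining term $\PP_{x,\theta}^{(l)}(W(X_m)>M,\,X_m\in\Cset)$ is the one delicate point, because $W$ need not be bounded on $\Cset$; I would treat it by splitting off $\{X_{m-1}\in\Cset\}$ (bounded by $b_0/M$ via $(b)$) and running one more last-exit decomposition inside $\{X_{m-1}\notin\Cset,\,X_m\in\Cset\}$, where the step into $\Cset$ is preceded by a state in $\Cset^c$, so that $\PP(W(X_m)>M\mid\F_{m-1})\le W(X_{m-1})/M$ and the excursion estimate applies again; this bounds it by $b_0/M+h(M)$. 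Altogether $g(M)\le 2h(M)+b_0/M$.

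Finally, since $\rate$ is nondecreasing with $\sum_j 1/\rate(j)<+\infty$ one has $\rate(j)\to\infty$ and (elementarily) $j/\rate(j)\to 0$; splitting the sum defining $h(M)$ at $j^\star(M)\eqdef\min\{j\ge 2:\ R_0/\rate(j)\le b_0/M\}$ gives $h(M)\le b_0 j^\star(M)/M+R_0\sum_{j\ge j^\star(M)}1/\rate(j)\to 0$ as $M\to\infty$. Hence $g(M)\to 0$, and with the first paragraph $\sup_{n\ge 0}\PP_{\xi_1,\xi_2}(W(X_n)>M)\to 0$, i.e. $\{W(X_n),n\ge 0\}$ is bounded in probability under $\PP_{\xi_1,\xi_2}$. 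The step I expect to be the main obstacle is the two-fold difficulty of $(i)$ the term where $X_m\in\Cset$, on which $W$ carries no a priori bound, and $(ii)$ the arbitrariness of $\xi_1$, for which $\xi_1(W)$ may be infinite; both are handled by peeling off one extra step (respectively one extra last-exit level) rather than by any moment bound, which is exactly where the summability hypothesis $\sum_n 1/\rate(n)<+\infty$ enters.
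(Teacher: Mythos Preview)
Your proof is correct and follows essentially the same route as the paper: a last-exit decomposition from $\Cset$, bounding each excursion term by the minimum of a Markov/supermartingale bound $b_0/M$ and the tail bound $R_0/\rate(\cdot)$, and then splitting the sum at a threshold. The paper organises this by first fixing $\epsilon$ and choosing a cutoff $N_\epsilon$ (so that $\sum_{k\ge N_\epsilon}1/\rate(k)$ is small) and then choosing $M_\epsilon$ so that $N_\epsilon b_0/M_\epsilon$ is small, whereas you package the same two bounds into $h(M)=\sum_{j\ge 2}\min(b_0/M,R_0/\rate(j))$ and let the cutoff $j^\star(M)$ depend on $M$; the fact $j/\rate(j)\to 0$ is exactly what makes your adaptive cutoff work and is implicit in the paper's two-step choice. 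Your treatment of the initial segment via the concave truncation $t\mapsto (t\wedge M)/M$ (rather than the paper's $\PE[\tau_\Cset/n\wedge 1]$) and your extra care with the term $\{X_m\in\Cset\}$ are genuine refinements---they make the argument go through without any implicit appeal to $\tau_\Cset<\infty$ $\PP_{\xi_1,\xi_2}$-a.s.\ and yield the slightly stronger conclusion $\sup_n\PP_{\xi_1,\xi_2}(W(X_n)>M)\to 0$---but the underlying mechanism is the same.
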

\begin{proof}
  Let $\epsilon >0$. We prove that there exists $M_\epsilon, N_\epsilon$ such
that for any $M \geq M_\epsilon$ and $n \geq N_\epsilon$, $\PP_{x,\theta}\left(
  W(X_n) \geq M \right) \leq \epsilon$. Choose $N_\epsilon$ such that for any
$n \geq N_\epsilon$
\[
\PE_{\xi_1,\xi_2}\left[\frac{ \tau_{\Cset}}{n} \wedge 1 \right] \leq
\epsilon/3\eqsp, \qquad \qquad \sup_l \sup_{\Cset \times \Theta}
\PE_{x,\theta}^{(l)}\left[ \rate(\tau_\Cset) \right]\ \sum_{k \geq n}
\{1/\rate(k) \} \leq \epsilon/3 \eqsp,
\]
and choose $M_\epsilon $ such that for any $M \geq M_\epsilon$,
$ N_\epsilon \ \sup_{\Cset \times \Theta} P_\theta W \leq \epsilon M /3$.   We write
\[
\PP_{\xi_1,\xi_2}\left( W(X_n) \geq M \right) = \sum_{k=0}^{n-1} \PP_{\xi_1,\xi_2}
\left( W(X_n) \geq M,X_k \in \Cset, X_{k+1:n} \notin \Cset \right) +
\PP_{\xi_1,\xi_2} \left( W(X_n) \geq M, X_{0:n} \notin \Cset \right) \eqsp.
\]
By the Markov inequality, for $n \geq N_\epsilon$,
\[
\PP_{\xi_1,\xi_2}\left( W(X_n) \geq M , X_{0:n} \notin \Cset \right) \leq
\PP_{\xi_1,\xi_2}\left( X_{0:n} \notin \Cset \right) \leq
\PP_{\xi_1,\xi_2}\left( \tau_\Cset > n \right) \leq
\PE_{\xi_1,\xi_2}\left[\frac{ \tau_{\Cset}}{n} \wedge 1 \right] \leq \epsilon/3
\eqsp.
\]
Furthermore,  for $n \geq N_\epsilon$,
\begin{multline*}
  \sum_{k=0}^{n-N_\epsilon} \PP_{\xi_1,\xi_2} \left( W(X_n) \geq M,X_k \in
    \Cset, X_{k+1:n} \notin \Cset \right) \leq \sum_{k=0}^{n-N_\epsilon}
  \PP_{\xi_1,\xi_2} \left(X_k \in \Cset, X_{k+1:n} \notin \Cset \right) \\
  \leq \sum_{k=0}^{n-N_\epsilon} \PE_{\xi_1,\xi_2} \left[ \un_\Cset(X_k) \
    \sup_{l} \sup_{\Cset \times \Theta} \PP_{x,\theta}^{(l)}\left( X_{1:n-k}
      \notin \Cset \right)\right] \leq \sum_{k=0}^{n-N_\epsilon} \sup_l
  \sup_{\Cset \times \Theta} \PP_{x,\theta}^{(l)} \left( \tau_{\Cset} \geq n-k
  \right) \\
  \leq \sum_{k=N_\epsilon}^{n} \frac{1}{\rate(k)} \sup_l \sup_{\Cset \times
    \Theta} \PE_{x,\theta}^{(l)}\left[ \rate(\tau_\Cset) \right] \leq \epsilon/3
  \eqsp.
\end{multline*}
Finally, for $n \geq N_\epsilon$ we write
\begin{multline*}
  \sum_{k=n-N_\epsilon+1}^n \PP_{x,\theta} \left( W(X_n) \geq M,X_k \in \Cset,
    X_{k+1:n} \notin \Cset \right) \\
  \leq \sum_{k=n-N_\epsilon+1}^n \PE_{x,\theta} \left[ \un_\Cset(X_k) \
    \PP_{X_k,\theta_k}^{(k)}\left(W(X_{n-k}) \geq M, X_{1:n-k} \notin \Cset \right)
  \right]
\end{multline*}
We have, for any $k \in \{n-N_\epsilon+1, \cdots, n \}$ and $(x,\theta) \in
\Cset \times \Theta$
\begin{multline*}
  \PP_{x,\theta}^{(k)}\left(W(X_{n-k}) \geq M, X_{1:n-k} \notin \Cset \right)
  \leq \frac{1}{M} \PE_{x,\theta}^{(k)}\left[ W(X_{n-k}) \un_{\Cset^c}(
    X_{1:n-k-1}) \right] \leq \frac{1}{M} \PE_{x,\theta}^{(k)}\left[ W(X_1)
  \right]
\end{multline*}
where, in the last inequality, we used the drift inequality on $W$ outside
$\Cset$.  Hence,
\[
\sum_{k=n-N_\epsilon+1}^n \PP_{x,\theta} \left( W(X_n) \geq M,X_k \in \Cset,
  X_{k+1:n} \notin \Cset \right) \leq \frac{N_\epsilon}{M} \sup_{\Cset \times
  \Theta} P_\theta W(x) \leq \epsilon /3 \eqsp.
\]
The proof is concluded.
\end{proof}

\subsection{Proof of Theorem~\ref{theo:SLLNUnboundedUnifCase}}

By using the function $\hat{g}_a^{(l)}$ introduced in
Section~\ref{sec:GeneralPoisson} and by Proposition~\ref{prop:QuasiPoissonEq},
we write $\PP_{x,\theta}-\as$
\begin{multline*}
  n^{-1} \sum_{k=1}^n \bar f(X_k) = n^{-1} \sum_{k=1}^n \left( (1-a)^{-1}\hat
    g_a^{(k)}(X_k,\theta_k) -\PE_{X_k,\theta_k}^{(k)} \left[ \hat
      g_a^{(k+1)}\left(X_1, \theta_1 \right) \right] \right) \\
  = n^{-1} (1-a)^{-1} \; \sum_{k=1}^n \left\{ \hat g_a^{(k)}(X_k,\theta_k) -
    \PE_{x,\theta}\left[\hat g_a^{(k)}(X_k,\theta_k) \vert \mathcal{F}_{k-1}
    \right]
  \right\} \\
  + n^{-1} (1-a)^{-1} \sum_{k=1}^n \left\{ \PE_{x,\theta}\left[\hat
      g_a^{(k)}(X_k,\theta_k) \vert \mathcal{F}_{k-1} \right] - (1-a)
    \PE_{x,\theta} \left[ \hat g_a^{(k+1)}\left(X_{k+1}, \theta_{k+1} \right)
      \vert \mathcal{F}_{k} \right]
  \right\} \\
  = n^{-1} (1-a)^{-1} \; \sum_{k=1}^n \left\{ \hat g_a^{(k)}(X_k,\theta_k) -
    \PE_{x,\theta}\left[\hat g_a^{(k)}(X_k,\theta_k) \vert \mathcal{F}_{k-1}
    \right]
  \right\} \\
  + n^{-1} (1-a)^{-1} \left\{ \PE_{x,\theta}\left[\hat g_a^{(1)}(X_1,\theta_1)
      \vert \mathcal{F}_{0} \right] - \PE_{x,\theta}\left[\hat
      g_a^{(n+1)}(X_{n+1},\theta_{n+1}) \vert
      \mathcal{F}_{n} \right] \right\} \\
  + a \; n^{-1} (1-a)^{-1} \; \sum_{k=1}^n \PE_{x,\theta} \left[ \hat
    g_a^{(k+1)}\left(X_{k+1}, \theta_{k+1} \right) \vert \mathcal{F}_{k}
  \right].
\end{multline*}
We apply the above inequalities with $a = a_n$ and consider the different terms
in turn. We show that they tend $\PP_{x,\theta}-\as$ to zero when the
deterministic sequence $\{a_n, n \geq 1 \}$ satisfies conditions which are
verified e.g. with $a_n = (n+1)^{-\zeta}$ for some $\zeta$ such that
\[
\zeta >0 \eqsp, \qquad 2\zeta < 1 - \left(0.5 \vee \beta(1-\alpha)^{-1} \right)
\eqsp, \qquad \zeta < 1 - \beta (1-\alpha)^{-1} \eqsp.
\]
To prove that each term converges a.s. to zero, we use the following
characterization
\[
\left[ \forall \epsilon>0 \eqsp, \quad \lim_{n \to +\infty} \PP\left(\sup_{m
      \geq n} |X_m| \geq \epsilon \right) \right] \Longleftrightarrow \left[
  \{X_n, n \geq 0 \} \to 0 \qquad \PP-\as \right] \eqsp.
\]

Hereafter, we assume that $|f|_{V^\beta} =1$. In the following, $c$ (and below, $c_1,c_2$) are constant the value of which may vary upon each appearance.

\paragraph{\tt Convergence of  Term 1.} Set $p \eqdef (1-\alpha) /\beta$.
We prove that
\[
n^{-1} (1-a_n)^{-1} \sum_{k=1}^n \left\{ \hat g_{a_n}^{(k)}(X_k,\theta_k) -
  \PE_{\xi_1,\xi_2}\left[\hat g_{a_n}^{(k)}(X_k,\theta_k) \vert
    \mathcal{F}_{k-1} \right] \right\} \longrightarrow 0 \eqsp,
\PP_{\xi_1,\xi_2}-\as
\]
provided the sequence $\{a_n, n\geq 0\}$ is non increasing, $ \lim_{n\to\infty}
\ n^{\max(1/p,1/2)-1} /a_n = 0$, $\sum_n n^{-1} [n^{\max(1/p,1/2)-1} /a_n]^p <
+\infty$ and $\sum_n |a_n -a_{n-1}| a_{n-1}^{-2} \  [n^{\max(1/p,1/2)-1} /a_n] <
+\infty$.
\begin{proof}
  Define $D_{n,k} \eqdef \hat g_{a_n}^{(k)}(X_k,\theta_k) -
  \PE_{\xi_1,\xi_2}\left[\hat g_{a_n}^{(k)}(X_k,\theta_k) \vert
    \mathcal{F}_{k-1} \right]$; $S_{n,k} \eqdef \sum_{j=1}^kD_{n,j}$, if $k\leq
  n$ and $S_{n,k} \eqdef \sum_{j=1}^nD_{n,j}+\sum_{j=n+1}^kD_{j,j}$ if $k>n$;
  and $R_{n} \eqdef \sum_{j=1}^{n-1}D_{n,j}-D_{n-1,j}$. Then for each $n$,
  $\{(S_{n,k},\F_k),\;k\geq 1\}$ is a martingale. For $k>n$ and by Lemma
  \ref{lem1martingales}, there exists a universal constant $C$ such that
\begin{multline}\PE_{\xi_1,\xi_2}\left[|S_{n,k}|^p\right]\leq Ck^{\max(p/2,1)-1}\left(\sum_{j=1}^n\PE_{\xi_1,\xi_2}\left[|D_{n,j}|^p\right]+\sum_{j=n+1}^k\PE_{\xi_1,\xi_2}\left[|D_{j,j}|^p\right]\right)\\
  \leq c_1 \ |\bar f|_{V^\beta} \ 
  k^{\max(p/2,1)-1}a_k^{-p}\sum_{j=1}^k\PE_{\xi_1,\xi_2}\left[V(X_j)\right]\leq
  c_1 \ |\bar f|_{V^\beta} \ k^{\max(p/2,1)}a_k^{-p}\xi_1(V),
  \label{eq:BoundOnSnk} \end{multline} where we used (\ref{eq:MajoRem}) and
Proposition~\ref{prop:ComparaisonGal}(\ref{prop:CpG2}). It follows that for any
$n\geq 1$, $\lim_{N\to\infty}
N^{-p}\PE_{\xi_1,\xi_2}\left(|S_{n,N}|^p\right)\leq
c_1\lim_{N\to\infty}\left(N^{\max(1/p,1/2)-1} /a_N\right)^p=0$. Then by the
martingale array extension of the Chow-Birnbaum-Marshall's inequality
(Lemma~\ref{lem:Birnbaum}),
\begin{multline*}
  2^{-p}\delta^p\PP_{\xi_1,\xi_2}\left(\sup_{m \geq n} m^{-1} (1-a_m)^{-1} \left|\sum_{j=1}^nD_{n,j}\right|>\delta\right)\\
  \leq
  \sum_{k=n}^\infty\left(k^{-p}-(k+1)^{-p}\right)\PE_{\xi_1,\xi_2}\left[|S_{n,k}|^p\right]+
  \left(\sum_{k=n+1}^\infty
    k^{-1}\PE_{\xi_1,\xi_2}^{1/p}\left[|R_k|^p\right]\right)^p
  \eqsp.
\end{multline*} Under the assumptions on the sequence $\{a_n, n\geq 0\}$  and given the bound (\ref{eq:BoundOnSnk}), the first term in the rhs tends to zero as $n \to +\infty$.
To bound the second term, we first note that
$\{(\sum_{j=1}^kD_{n,j}-D_{n-1,j},\F_k),\;k\geq 1\}$ is a martingale for each
$n$. Therefore, by Lemma \ref{lem1martingales} and the definition of $D_{n,j}$
\begin{multline*}
  \PE_{\xi_1,\xi_2}\left[|R_n|^p\right]\leq
  C \ n^{\max(p/2,1)-1}\sum_{j=1}^{n-1}\PE_{\xi_1,\xi_2}\left[|D_{n,j}-D_{n-1,j}|^p\right] \\
  \leq 2 C \ n^{\max(p/2,1)-1}\sum_{j=1}^{n-1}\PE_{\xi_1,\xi_2}\left[|\hat
    g_{a_n}^{(j)}(X_j,\theta_j) -\hat g_{a_{n-1}}^{(j)}(X_j,\theta_j)|^p\right]
  \eqsp.\end{multline*} Then, using (\ref{bounddiffGa}) (with $q=\infty$) and
the usual argument of bounding moments of $V^\beta(X_j)$, we get
\[\PE_{\xi_1,\xi_2}^{1/p}\left[|R_n|^p\right]\leq c_1 \ |\bar f|_{V^\beta} \ n^{\max(1/2,1/p)} \ |a_n - a_{n-1}|  \ a_n^{-1} a_{n-1}^{- 2}\xi_1(V).\]
Under the assumptions, $\sum_n n^{-1}
\PE_{\xi_1,\xi_2}^{1/p}\left[|R_n|^p\right] < +\infty$ and this concludes the proof.

\end{proof}

\paragraph{\tt Convergence of Term 2.}
We prove that
\[
n^{-1} (1-a_n)^{-1} \PE_{\xi_1,\xi_2}\left[\hat g_{a_n}^{(1)}(X_1,\theta_1)
  \vert \mathcal{F}_{0} \right] \longrightarrow 0 \eqsp,
\]
provided $\lim_n n a_n = +\infty$ and $\lim_n a_n =0$.
\begin{proof}
  By Theorem~\ref{theo:controleG} applied with $q= +\infty$, it may be
  proved that there exist constants $c,N$ such that
\[ \left|
  \PE_{\xi_1,\xi_2}\left[\hat g_{a_n}^{(1)}(X_1,\theta_1) \vert \mathcal{F}_{0}
  \right] \right| \leq c a_n^{-1} \xi_1(V) + c \left(1-(1-a_n)^N \right)^{-1} N
\]
Divided by $n^{-1} (1-a_n)$, the rhs tends to zero as $n \to +\infty$.
\end{proof}

\paragraph{\tt Convergence of  Term 3.}
We prove that
\[
n^{-1} (1-a_n)^{-1} \PE_{\xi_1,\xi_2}\left[\hat
  g_{a_n}^{(n+1)}(X_{n+1},\theta_{n+1}) \vert \mathcal{F}_{n} \right]
\longrightarrow 0 \eqsp, \PP_{\xi_1,\xi_2}-\as
 \]
provided the sequence $\{n^{-1} a_n^{-1}, n\geq 1 \}$ is non-increasing, $\lim_n
n^{1-\beta(1-\alpha)^{-1}} a_n = +\infty$, $\sum_n (n a_n)^{-(1-\alpha)\beta^{-1}}
< +\infty$ and $\lim_n a_n =0$.
\begin{proof}
  There exist constants $c_1,c_2,N$ such that for any $n$ large enough (i.e.
  such that $1-a_n \geq 1/2$) and $p \eqdef (1-\alpha) \beta^{-1} >1$
    \begin{multline*}
      \PP_{\xi_1,\xi_2} \left( \sup_{m \geq n} m^{-1} (1-a_m)^{-1} \; \left|
          \PE_{\xi_1,\xi_2}\left[ \hat g_{a_m}^{(m+1)}(X_{m+1},\theta_{m+1}) \vert
            \mathcal{F}_{m} \right] \right| \geq \delta \right) \\
      \leq 2^p \delta^{-p} \; \PE_{\xi_1,\xi_2}\left[ \sup_{m \geq n} m^{-p}
        \left| \PE_{\xi_1,\xi_2}\left[ \hat g_{a_m}^{(m+1)}(X_{m+1},\theta_{m+1})
            \vert
            \mathcal{F}_{m} \right] \right|^p \right] \\
      \leq 2^p \delta^{-p} \; \sum_{m \geq n} m^{-p} \; \PE_{\xi_1,\xi_2}\left[
        \left| \PE_{\xi_1,\xi_2}\left[ \hat g_{a_m}^{(m+1)}(X_{m+1},\theta_{m+1})
            \vert \mathcal{F}_{m} \right]
        \right|^p \right] \\
      \leq 2^p \delta^{-p} \; \sum_{m \geq n} m^{-p} \; \PE_{\xi_1,\xi_2}\left[
        \left| \hat g_{a_m}^{(m+1)}(X_{m+1},\theta_{m+1}) \right|^p \right] \\
      \leq 2^{2p-1} \; \delta^{-p} \; \sum_{m \geq n} m^{-p} \; \left\{
        \frac{c_1}{a_m^p} \; \PE_{\xi_1,\xi_2}\left[ V^{\beta p}(X_{m+1}) \right]
        + c_2 \left(\frac{N}{(1-(1-a_m)^{N})}\right)^p \right\}
  \end{multline*}
  where we used Theorem~\ref{theo:controleG} with $q = +\infty$.  Furthermore
  by Propositions~\ref{prop:ComparaisonGal}(\ref{prop:CpG1}) and
  \ref{prop:ComparaisonGal2} and the drift inequality,
  \begin{multline*}
    \PP_{\xi_1,\xi_2} \left( \sup_{m \geq n} m^{-1} (1-a_m)^{-1} \; \left|
        \PE_{\xi_1,\xi_2}\left[ \hat g_{a_m}^{(n+1)}(X_{m+1},\theta_{m+1})
          \vert
          \mathcal{F}_{m} \right] \right| \geq \delta \right) \\
    \leq \frac{2^p c_3}{\delta^{p}} \; \left\{ n^{-p} a_n^{-p}
      \PE_{\xi_1,\xi_2}[V(X_n)] + \sum_{m \geq n} m^{-p} a_m^{-p} + \sum_{m
        \geq n} m^{-p} \;
      \left(\frac{N}{(1-(1-a_m)^{N})}\right)^p \right\} \\
    \leq \frac{2^pc_3}{\delta^{p}} \; \left\{ n^{-p} a_n^{-p} \left(\xi_1(V) + n
        \bar b \right) + \bar b \sum_{m \geq n} m^{-p} a_m^{-p} + \sum_{m \geq
        n} m^{-p} \; \left(\frac{N}{(1-(1-a_m)^{N})}\right)^p \right\} \eqsp.
  \end{multline*}
  Under the stated conditions on $\{a_n, n\geq 1
  \}$, the rhs tends to zero as $n \to +\infty$.
\end{proof}

\paragraph{\tt Convergence of  Term 4.}
We prove that
\[
a_n n^{-1} (1-a_n)^{-1} \sum_{k=1}^n \PE_{\xi_1,\xi_2}\left[\hat
  g_{a_n}^{(k+1)}(X_{k+1},\theta_{k+1}) \vert \mathcal{F}_{k} \right]
\longrightarrow 0 \eqsp, \PP_{\xi_1,\xi_2}-\as
\]
provided $\{a_n^{1 \wedge [(1-\alpha-\beta)/\alpha]} \; n^{-1}, n\geq 1\}$ is
non-increasing, $\sum_n a_n^{1 \wedge [(1-\alpha-\beta)/\alpha]} \; n^{-1} <
+\infty$, and $\lim_n a_n =0$.

\begin{proof}  Choose $q \geq 1$ such that  $\beta + \alpha/q \leq 1-\alpha$. Fix $\epsilon >0$.  From Theorem~\ref{theo:controleG}, there exist  constants $C,N$ such that  for any $n\geq1$, $l\geq 0$, $(x,\theta) \in \Xset \times \Theta$,
  \[
  \left| \hat g_{a_n}^{(l)}(x,\theta) \right| \leq C \ a_n^{1/q-1}\ V^{\beta +
    \alpha /q}(x) + 4 \epsilon N (1-(1-a_n)^N)^{-1} \eqsp.
\]
Hence for $n$ large enough such that $(1-a_n) \geq 1/2$
\begin{multline*}
  \left| a_n n^{-1} (1-a_n)^{-1} \sum_{k=1}^n \PE_{\xi_1,\xi_2}\left[\hat
      g_{a_n}^{(k+1)}(X_{k+1},\theta_{k+1}) \vert \mathcal{F}_{k} \right] \right| \\
  \leq 8 a_n \epsilon N (1-(1-a_n)^N)^{-1} + 2 C \ a_n^{1/q } n^{-1} \;
  \sum_{k=1}^n \PE_{\xi_1,\xi_2} \left[ V^{\beta +
      \alpha /q}(X_{k+1}) \vert \mathcal{F}_k\right] \\
  \leq 8 a_n \epsilon N (1-(1-a_n)^N)^{-1} + 2 C \ a_n^{1/q} n^{-1} \;
  \sum_{k=1}^n V^{1-\alpha}(X_k) + 2 C\; \ a_n^{1/q} \bar b \eqsp,
\end{multline*}
where we used $\beta + \alpha/q \leq 1-\alpha$ and
Proposition~\ref{prop:ComparaisonGal}(\ref{prop:CpG1}) in the last inequality.
Since $\lim_n a_n =0$ and $\lim_n a_n \epsilon N (1-(1-a_n)^N)^{-1} =
\epsilon$, we only have to prove that $a_n^{1/q} \; n^{-1} \sum_{k=1}^n
V^{1-\alpha}(X_k)$ converges to zero $\PP_{\xi_1,\xi_2}$-\as By the Kronecker
Lemma (see e.g \cite[Section 2.6]{halletheyde80}), this amounts to prove that
$\sum_{k \geq 1} a_k^{1/q} k^{-1} \; V^{1-\alpha}(X_k)$ is finite \as This
property holds upon noting that by Proposition~\ref{prop:ComparaisonGal2} and
Proposition~\ref{prop:ComparaisonGal}(\ref{prop:CpG1})
\begin{multline*}
  \PE_{\xi_1,\xi_2} \left[ \sum_{k \geq n} a_k^{1/q} k^{-1} \;
    V^{1-\alpha}(X_k) \right] \leq a_n^{1/q} n^{-1} \; \PE_{\xi_1,\xi_2} \left[
    V(X_n)\right] +
  \sum_{k \geq n} a_k^{1/q} k^{-1} \\
  \leq a_n^{1/q} n^{-1} \; \left( \xi_1(V) + \bar b n \right)+ \sum_{k \geq n}
  a_k^{1/q} k^{-1},
\end{multline*}
and the rhs tends to zero under the stated assumptions.
\end{proof}

\subsection{Proof of Proposition~\ref{prop:SLLNUnboundedUnifCaseBounded}}
We only give the sketch of the proof since the proof is very
  similar to that of Theorem~\ref{theo:SLLNUnboundedUnifCase}.  We start with
  proving a result similar to Theorem~\ref{theo:controleG}. Since $\Dset =
  \Xset$, the sequence $\{\tau^k, k\geq 0\}$ is deterministic and $\tau^{k+1} =
  \tau^k + N +1$. By adapting the proof of Theorem~\ref{theo:controleG} ($f$ is
  bounded and $\Dset = \Xset$), we establish that for any $\epsilon>0$, there
  exists an integer $n \geq 2$ such that for any $0<a<1$, any bounded function
  $f$, $l\geq 0$, $(x,\theta) \in \Xset \times \Theta$
\[
  \left( |\bar f|_{1} \right)^{-1} \; \left| \hat g_{a}^{(l)}(x,\theta) \right|
  \leq n+ \epsilon \; \left(1-(1-a)^n \right)^{-1} \; n \eqsp.
\]
We then introduce the martingale decomposition as in the proof of
Theorem~\ref{theo:SLLNUnboundedUnifCase} and follow the same lines (with any
$p>1$).

%%-------------------------------
%%
\appendix
%%
%%-------------------------------
\section{Explicit control of convergence}
\label{app:UniformControl}
We provide sufficient conditions for the assumptions
A\ref{A-VCset}(\ref{A4rev}) and A\ref{A6}.  The technique relies on the
explicit control of convergence of a transition kernel $P$ on a general state
space $(\mathbb{T}, \mathcal{B}(\mathbb{T}))$ to its stationary distribution
$\pi$.
\begin{prop}
\label{prop:ExplicitControlCvg}
Let $P$ be a $\phi$-irreducible and aperiodic transition kernel on
$(\mathbb{T}, \mathcal{B}(\mathbb{T}))$.
\begin{enumerate}[(i)]
\item \label{block1} Assume that there exist a probability measure $\nu$ on
  $\mathbb{T}$, positive constants $\varepsilon, b,c$, a measurable set
  $\Cset$, a measurable function $V: \mathbb{T} \to [1, +\infty)$ and $0<
  \alpha \leq 1$ such that
\begin{equation}
  \label{eq:HypPropExplicitControlCvg}
  P(x,\cdot) \geq \un_\Cset(x) \; \varepsilon \  \nu(\cdot) \eqsp, \qquad \qquad PV
\leq V - c \  V^{1-\alpha} +b \  \un_\Cset \eqsp.
\end{equation}
Then $P$ possesses an invariant probability measure $\pi$ and
$\pi(V^{1-\alpha})< + \infty$.
\item \label{block2} Assume in addition that $ c \ \inf_{\Cset^c} V^{1-\alpha}
  \geq b$, $\sup_\Cset V < + \infty$ and $\nu(\Cset)>0$.  Then there exists a
  constant $C$ depending upon $\sup_\Cset V$, $\nu(\Cset)$ and $\varepsilon,
  \alpha,b,c$ such that for any $0 \leq \beta \leq 1-\alpha$ and $1 \leq \kappa
  \leq\alpha^{-1}(1-\beta)$,
\begin{equation}
  \label{eq:ConcPropExplicitControlCvg}
  (n+1)^{\kappa-1} \; \| P^n(x,\cdot) - \pi(\cdot) \|_{V^\beta} \leq C \ V^{\beta +
  \alpha \kappa}(x).
\end{equation}
\end{enumerate}
\end{prop}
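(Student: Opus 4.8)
The plan is to prove part~(i) as a quick consequence of classical Markov chain theory and to obtain part~(ii) along the split-chain-plus-renewal route of \cite{meynettweedie93}, \cite{jarneretroberts02} and \cite{doucetal04}. For part~(i): since $V\ge1$ and $0<\alpha\le1$ we have $V^{1-\alpha}\ge1$, so (\ref{eq:HypPropExplicitControlCvg}) gives $PV\le V-c\,\un_{\Cset^c}+b\,\un_{\Cset}$; as the minorization makes $\Cset$ a $\nu$-small (hence petite) set and $P$ is $\phi$-irreducible and aperiodic, $P$ is positive Harris recurrent with a unique invariant probability $\pi$ by \cite[Theorems~11.3.4 and~13.0.1]{meynettweedie93}. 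For the moment bound I would apply the drift to $\widetilde V\eqdef c^{-1}V+1$, which gives $P\widetilde V\le\widetilde V-V^{1-\alpha}+(b/c)\,\un_{\Cset}$ with $V^{1-\alpha}\ge1$ and $\widetilde V$ finite everywhere; \cite[Theorem~14.3.7]{meynettweedie93} then yields $\pi(V^{1-\alpha})<+\infty$.

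For part~(ii) I would first reduce to an atomic chain. Using $P\ge\varepsilon\,\un_{\Cset}\otimes\nu$, form the Nummelin split chain on $\mathbb{T}\times\{0,1\}$; its first marginal is the original chain, and $\Cset\times\{1\}$ is an accessible atom exactly because $\nu(\Cset)>0$. With $\widehat V(x,i)\eqdef V(x)$ and using $\sup_{\Cset}V<+\infty$ (so that $\nu(V)\le\varepsilon^{-1}(\sup_{\Cset}V+b)<+\infty$), the split chain inherits a drift of the form (\ref{eq:A2-A5}) in which $\widehat V$ is bounded over the split copy of $\Cset$; after enlarging $\Cset$ to a suitable level set of $V$ (which affects none of the hypotheses and restores $c\inf_{\Cset^c}V^{1-\alpha}\ge b$ for the new constant) one may then assume outright that the chain possesses an accessible atom $\alpha\subset\Cset$ with $\sup_{\Cset}V<+\infty$, and it is enough to bound $\|P^n(x,\cdot)-\pi\|_{V^\beta}$ for this chain.

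The analytic heart is a family of $V^\beta$-modulated moment bounds on the return time to $\alpha$. By Lemma~\ref{lem:JarnerRoberts} with $\Theta$ reduced to a single point, $PV^\gamma\le V^\gamma-\gamma c\,V^{\gamma-\alpha}+\bar b\,\un_{\Cset}$ for every $0\le\gamma\le1$; feeding these into Proposition~\ref{prop:ComparaisonGal}, Proposition~\ref{prop:ComparaisonGal2} and Proposition~\ref{prop:TimeFiniteAS} (taken with $\Theta$ a singleton and $l=0$) and chaining over a finite ladder of exponents as in \cite[Section~2]{doucetal04}, one obtains, for $0\le\beta\le1-\alpha$ and $1\le\kappa\le\alpha^{-1}(1-\beta)$, a finite constant $C_0$ with
\[
\PE_x\!\left[\sum_{k=0}^{\tau_\alpha-1}(k+1)^{\kappa-1}V^\beta(X_k)\right]\le C_0\,V^{\beta+\alpha\kappa}(x),\qquad x\in\mathbb{T},
\]
and in particular $\PE_\alpha[\tau_\alpha^{\kappa}]<+\infty$ and $\PE_\alpha[\sum_{k<\tau_\alpha}(k+1)^{\kappa-1}V^\beta(X_k)]\le C_0(\sup_{\Cset}V)^{\beta+\alpha\kappa}<+\infty$.

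Finally, expressing $P^n(x,\cdot)-\pi$ through the first-entrance/last-exit decomposition around $\alpha$ (equivalently, $\pi(A)=\PE_\alpha[\tau_\alpha]^{-1}\PE_\alpha[\sum_{k=0}^{\tau_\alpha-1}\un_A(X_k)]$), the $V^\beta$-norm of $P^n(x,\cdot)-\pi$ is bounded by a $V^\beta$-weighted tail of the hitting time of $\alpha$ from $x$ plus a term controlled by the $V^\beta$-weighted renewal sequence of the atom. Markov's inequality turns the displayed bound into $\PP_x(\tau_\alpha>n)\le C\,n^{-\kappa}\,V^{\alpha\kappa}(x)$, and the polynomial renewal estimates of \cite{jarneretroberts02} (where the stationary side costs exactly one power through size-biasing of the excursion) convert the $n^{-\kappa}$ tail at the atom into an $n^{-(\kappa-1)}$ decay of the renewal sequence; combining the pieces and undoing the splitting gives $(n+1)^{\kappa-1}\|P^n(x,\cdot)-\pi\|_{V^\beta}\le C\,V^{\beta+\alpha\kappa}(x)$ with $C$ depending only on $\sup_{\Cset}V$, $\nu(\Cset)$, $\varepsilon$, $\alpha$, $b$, $c$. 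I expect this last renewal step to be the main obstacle: keeping every constant uniform in the starting point, controlling the last-exit term, and producing exactly the loss from $\alpha^{-1}$ to $\alpha^{-1}(1-\beta)$ (and from $\kappa$ to $\kappa-1$) is where the delicate bookkeeping lies, and it is the part one would ultimately invoke from \cite{jarneretroberts02} and \cite{doucetal04} rather than reprove from scratch.
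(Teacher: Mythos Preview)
Your proposal is correct and is essentially the same route the paper takes, only with more detail than the paper itself provides. For part~(i) the paper argues via accessibility of a level set $\Dset$ with $\nu(\Dset)>0$, uses the argument of Proposition~\ref{prop:TimeFiniteAS} to get $\sup_{\Dset}\PE_x[\tau_\Dset]<+\infty$, and then invokes \cite[Theorems~8.4.3 and~10.0.1]{meynettweedie93}; your direct appeal to \cite[Theorems~11.3.4, 13.0.1, 14.3.7]{meynettweedie93} from the Foster--Lyapunov inequality $PV\le V-c+b\,\un_\Cset$ is an equivalent (and slightly more streamlined) path to the same conclusion.

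For part~(ii) the paper gives no argument at all and simply cites \cite{gersendeetmoulines03} (see also \cite{andrieu:fort:2005,douc:moulines:soulier:2007}). The sketch you wrote---split chain to produce an accessible atom (using $\nu(\Cset)>0$), transfer the drift via $\sup_\Cset V<+\infty$, derive the modulated-moment bounds $\PE_x[\sum_{k<\tau_\alpha}(k+1)^{\kappa-1}V^\beta(X_k)]\le C_0 V^{\beta+\alpha\kappa}(x)$ by chaining Lemma~\ref{lem:JarnerRoberts} over a finite ladder of exponents, and then run the first-entrance/last-exit renewal decomposition with the size-biasing loss of one power---is exactly the structure of those references; the primary source for the explicit constant tracking with this exact dependence on $\sup_\Cset V$, $\nu(\Cset)$, $\varepsilon,\alpha,b,c$ is \cite{gersendeetmoulines03}, which you may want to cite alongside \cite{jarneretroberts02,doucetal04}. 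One small point of care in your outline: the step ``enlarging $\Cset$ to a suitable level set of $V$'' has to be done \emph{after} splitting, since the minorization (and hence the atom) lives only on the original $\Cset$; the drift can then be rewritten with a larger level set in the indicator without touching the atom, exactly as in the corollary following Proposition~\ref{prop:ExplicitControlCvg}.
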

\begin{proof}
  The conditions (\ref{eq:HypPropExplicitControlCvg}) imply that $V$ is
  unbounded off petite set and $P$ is recurrent. It also implies that $\{V<
  +\infty \}$ is full and absorbing: hence there exists a level set $\Dset$ of
  $V$ large enough such that $\nu(\Dset) >0$. Following the same lines as in
  the proof of Proposition~\ref{prop:TimeFiniteAS}, we prove that $\sup_\Dset
  \PE_x[\tau_\Dset] < +\infty$. The proof of (\ref{block1}) in concluded by
  \cite[Theorems 8.4.3., 10.0.1]{meynettweedie93}. The proof of (\ref{block2})
  is given in e.g.  \cite{gersendeetmoulines03} (see also
  \cite{andrieu:fort:2005,douc:moulines:soulier:2007}).

\end{proof}

When $b \leq c$, $ c \ \inf_{\Cset^c} V^{1-\alpha} \geq b$. Otherwise, it is
easy to deduce the conditions of (\ref{block2}) from conditions of the form
(\ref{block1}).

\begin{coro}
  Let $P$ be a phi-irreducible and aperiodic transition kernel on $(\mathbb{T},
  \mathcal{B}(\mathbb{T}))$. Assume that there exist positive constants $ b,c$,
  a measurable set $\Cset$, an unbounded measurable function $V: \mathbb{T} \to
  [1, +\infty)$ and $0< \alpha \leq 1$ such that $P V \leq V - c V^{1-\alpha}
  +b \un_\Cset$. Assume in addition that the level sets of $V$ are $1$-small.
  Then there exist a level set $\Dset$ of $V$, positive constants
  $\varepsilon_\Dset$, $c_\Dset$ and a probability measure $\nu_\Dset$ such
  that
\[
P(x,\cdot) \geq \un_\Dset(x) \; \varepsilon_\Dset \ \nu_\Dset(\cdot) \eqsp,
\qquad \qquad PV \leq V - c_\Dset \ V^{1-\alpha} +b \ \un_\Dset \eqsp,
\]
and $\sup_\Dset V < +\infty$, $\nu_\Dset(\Dset) >0$, and $c_\Dset \
\inf_{\Dset^c} V^{1-\alpha} \geq b$.
\end{coro}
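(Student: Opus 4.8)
The plan is first to reshape the drift inequality so that a level set of $V$ can serve as the drift set $\Cset$, and then to read off the remaining properties. Since $0<\alpha\le 1$, split $c=\tfrac{c}{2}+\tfrac{c}{2}$ and set $d_0\eqdef (2b/c)^{1/(1-\alpha)}$ and $\Dset_0\eqdef\{x\in\mathbb{T} : V(x)\le d_0\}$, a level set of $V$ hence $1$-small by assumption, and note that $x\notin\Dset_0$ is equivalent to $\tfrac{c}{2}V^{1-\alpha}(x)>b$. For $x\notin\Dset_0$ one has $b\un_\Cset(x)-\tfrac{c}{2}V^{1-\alpha}(x)\le b-\tfrac{c}{2}V^{1-\alpha}(x)<0$, while for $x\in\Dset$, with $\Dset$ any level set of $V$ containing $\Dset_0$, one has $b\un_\Cset(x)-\tfrac{c}{2}V^{1-\alpha}(x)\le b=b\un_\Dset(x)$ since $V\ge 1$. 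Adding $\tfrac{c}{2}V^{1-\alpha}$ back to $PV\le V-cV^{1-\alpha}+b\un_\Cset$, we obtain that for every level set $\Dset=\{V\le d\}$ with $d\ge d_0$,
\[
PV\le V-\tfrac{c}{2}V^{1-\alpha}+b\un_\Dset\eqsp,
\]
that is, the announced drift holds with $c_\Dset\eqdef c/2$. Moreover $\Dset^c\subseteq\Dset_0^c=\{V^{1-\alpha}>2b/c\}$ yields $c_\Dset\inf_{\Dset^c}V^{1-\alpha}\ge\tfrac{c}{2}\cdot\tfrac{2b}{c}=b$, and $\sup_\Dset V\le d<+\infty$ is immediate.

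It remains to choose $\Dset$ so that, in addition, its minorizing measure charges it. Since $V$ is $[1,+\infty)$-valued its level sets exhaust $\mathbb{T}$, and since $P$ is $\phi$-irreducible, the level set $\{V\le d\}$ is accessible for all $d$ large enough (a maximal irreducibility measure is $\sigma$-finite and nonzero, hence positive on $\{V\le d\}$ for $d$ large). For such a $d$, also taken $\ge d_0$, the level set $\Dset\eqdef\{V\le d\}$ — which is $1$-small by hypothesis — admits, after the usual small-set normalization (see \cite[Chapter~5]{meynettweedie93}), a minorizing pair $(\varepsilon_\Dset,\nu_\Dset)$ with $\nu_\Dset(\Dset)>0$. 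Combined with the first paragraph, this $\Dset$ fulfils all the stated conclusions.

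The content of the argument is the elementary drift manipulation of the first paragraph; the remainder is bookkeeping. The only point deserving a little care is that $\Dset$ must simultaneously be large enough for the split of the drift term ($d\ge d_0$) and large enough for its minorizing measure to charge it — but the split works verbatim for every level set containing $\Dset_0$, so there is no real tension, and I do not anticipate any genuine obstacle: the statement is an elementary rewriting of condition~(\ref{block1}) into the symmetric form required by~(\ref{block2}).
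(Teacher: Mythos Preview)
Your proof is correct and follows essentially the same route as the paper's: split $c=\gamma c+(1-\gamma)c$ and absorb $b\un_\Cset$ into the $(1-\gamma)c\,V^{1-\alpha}$ part outside the level set $\{V^{1-\alpha}\le b/[c(1-\gamma)]\}$, then enlarge the level set if needed to make it accessible. The paper keeps $\gamma$ generic and takes it close to~$1$, whereas you fix $\gamma=1/2$; this is purely cosmetic, and your justification of accessibility (level sets exhaust $\mathbb{T}$, so a maximal irreducibility measure charges them eventually) is if anything slightly cleaner than the paper's appeal to $\{V<\infty\}$ being full and absorbing.
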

\begin{proof}
  For any $0 < \gamma <1$, $PV \leq V - \gamma \; c \ V^{1-\alpha} +b \
  \un_{\Dset_\gamma}$ with ${\Dset_\gamma} \eqdef \{ V^{1-\alpha} \leq b [c
  (1-\gamma)]^{-1} \}$.  Hence, $\sup_{\Dset_\gamma} V < +\infty$; and for
  $\gamma$ close to $1$, we have $\gamma c \; \inf_{\Dset^c_\gamma}
  V^{1-\alpha} \geq b$. Finally, the drift condition
  (\ref{eq:HypPropExplicitControlCvg}) implies that the set $\{V < +\infty \}$
  is full and absorbing and thus the level sets $\{V \leq d \}$ are accessible
  for any $d$ large enough.
\end{proof}

The $1$-smallness assumption is usually done for convenience and is not
restrictive. In the case the level sets are petite (and thus $m$-small for some
$m \geq 1$), the explicit upper bounds get intricate and are never detailed in
the literature (at least in the polynomial case).  Nevertheless, it is a
recognized fact that the bounds derived in the case $m=1$ can be extended to the
case $m>1$.

\section{$L^p$-martingales and the Chow-Birnbaum-Marshall's inequality}
We deal with martingales and martingale arrays in the paper using the following two results.
\begin{lemma}\label{lem1martingales}
Let $\{(D_k,\F_k),\;1\leq k\geq 1\}$ be a martingale difference sequence and $M_n=\sum_{k=1}^nD_k$. For any $p>1$,
\begin{equation}
\PE\left[\left|M_n\right|^p\right]\leq Cn^{\max(p/2,1)-1}\sum_{k=1}^n \PE\left(\left|D_k\right|^p\right),\end{equation}
where $C=\left(18pq^{1/2}\right)^p$, $p^{-1}+q^{-1}=1$.
\end{lemma}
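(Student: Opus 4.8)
The plan is to deduce this from the Burkholder--Davis--Gundy inequality together with an elementary convexity argument, which is the quickest route when the martingale differences are assumed to have finite $p$-th moments. Since $D_k$ is $\F_k$-measurable with $\PE[D_k\mid\F_{k-1}]=0$, the partial sums $M_n=\sum_{k=1}^nD_k$ form a martingale, and we may assume $\PE|D_k|^p<+\infty$ for all $k$ (otherwise the right-hand side is infinite and there is nothing to prove). Write $\langle M\rangle_n\eqdef\sum_{k=1}^nD_k^2$ for the (discrete) bracket process. The first step is to invoke the one-sided Burkholder square-function estimate in the form: for every $p>1$ there is a constant $C_p$, depending only on $p$, such that $\PE[|M_n|^p]\le C_p\,\PE[\langle M\rangle_n^{p/2}]$; a bookkeeping of the constants in the standard proof (via Doob's $L^p$ maximal inequality, which contributes the factor $q=p/(p-1)$, and the square-function bound) shows that one may take $C_p=(18\,p\,q^{1/2})^p$. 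This is exactly the statement recorded in \cite[Chapter 2]{halletheyde80}, which I would cite rather than reproduce.

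The second step reduces $\langle M\rangle_n^{p/2}$ to $\sum_k|D_k|^p$ by a power-mean inequality, distinguishing two cases. If $p\ge 2$, the map $t\mapsto t^{p/2}$ is convex on $[0,+\infty)$, so Jensen's inequality applied to the uniform average over $k\in\{1,\dots,n\}$ gives
\[
\langle M\rangle_n^{p/2}=n^{p/2}\Big(\frac1n\sum_{k=1}^nD_k^2\Big)^{p/2}\le n^{p/2-1}\sum_{k=1}^n|D_k|^p\eqsp.
\]
If $1<p<2$, then $p/2<1$ and $t\mapsto t^{p/2}$ is subadditive, so $\langle M\rangle_n^{p/2}\le\sum_{k=1}^n|D_k|^p=n^{0}\sum_{k=1}^n|D_k|^p$. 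In both cases the bound is $n^{\max(p/2,1)-1}\sum_{k=1}^n|D_k|^p$; taking expectations and combining with the first step yields $\PE[|M_n|^p]\le C_p\,n^{\max(p/2,1)-1}\sum_{k=1}^n\PE|D_k|^p$ with $C_p=(18\,p\,q^{1/2})^p$, which is the claim.

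The only delicate point is the explicit value of the constant. The qualitative inequality $\PE[|M_n|^p]\le C_p\,\PE[\langle M\rangle_n^{p/2}]$ is classical and admits several proofs; what requires care is verifying that $C_p$ may be taken to equal $(18\,p\,q^{1/2})^p$, which amounts to following the constants through Doob's maximal inequality and the Burkholder estimate as done in \cite{halletheyde80}. I expect this constant-tracking to be the main (and essentially bibliographic) obstacle; everything else is the two-line convexity computation above.
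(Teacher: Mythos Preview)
Your proof is correct and follows essentially the same route as the paper: apply Burkholder's inequality from \cite[Theorem 2.10]{halletheyde80} with the explicit constant $(18pq^{1/2})^p$, then bound $(\sum_k D_k^2)^{p/2}$ by $n^{\max(p/2,1)-1}\sum_k|D_k|^p$ via subadditivity when $p\le 2$ and Jensen/H\"older when $p>2$. The only cosmetic difference is that the paper phrases the $p>2$ step as H\"older's inequality rather than Jensen, which is of course the same thing here.
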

\begin{proof}
By Burkholder's inequality (\cite{halletheyde80}, Theorem 2.10) applied to the martingale $\{(M_n,\F_n),\;n\geq 1\}$, we get
\[
\PE\left(\left|M_n\right|^p\right)\leq C\PE\left[\left(\sum_{k=1}^k|D_k|^2\right)^{p/2}\right],\]
where $C=\left(18pq^{1/2}\right)^p$, $p^{-1}+q^{-1}=1$. The proof follows by noting that
\begin{equation}\label{eq:prop1}\left(\sum_{k=1}^n|D_k|^2\right)^{p/2}\leq n^{\max(p/2,1)-1}\sum_{k=1}^n\left|D_k\right|^p.\end{equation}
To prove (\ref{eq:prop1}), note that if $1<p\leq 2$, the convexity inequality $(a+b)^\alpha\leq a^\alpha+b^\alpha$ which hold true for all $a,b\geq 0$ and $0\leq \alpha\leq 1$ implies that $\left(\sum_{n=1}^n|D_k|^2\right)^{p/2}\leq \sum_{k=1}^n |D_k|^p$. If $p>2$, Holder's inequality gives $\left(\sum_{k=1}^n|D_k|^2\right)^{p/2}\leq n^{p/2-1}\left(\sum_{k=1}^n|D_k|^p\right)$.
\end{proof}

Lemma~\ref{lem:Birnbaum} can be found in  \cite{Atchade:2009} and provides a generalization to the classical Chow-Birnbaum-Marshall's
inequality.
\begin{lemma}
\label{lem:Birnbaum}
Let $\{D_{n,i},\F_{n,i},\;1\leq i\leq n\}$, $n\geq 1$ be a martingale-difference array and $\{c_n,\;n\geq 1\}$ a non-increasing sequence of positive numbers. Assume that $\F_{n,i}=\F_i$ for all $i,n$. Define
\[S_{n,k} \eqdef \sum_{i=1}^k D_{n,i},\;\; \mbox{ if }1\; \leq k\leq n \;\; \mbox{ and }\;\;\; S_{n,k} \eqdef \sum_{i=1}^n D_{n,i}+\sum_{j=n+1}^kD_{j,j},\;\;\;k>n;\]
\[R_n \eqdef \sum_{j=1}^{n-1}\left(D_{n,j}-D_{n-1,j}\right).\]
For $n\leq m\leq N$, $p\geq 1$ and $\lambda>0$
\begin{multline}2^{-p}\lambda^p\PP\left(\max_{n\leq m\leq N}c_m|M_{m,m}|>\lambda\right)\leq c_N^p\PE\left(|S_{n,N}|^p\right)+\sum_{j=n}^{N-1}\left(c_j^p-c_{j+1}^p\right)\PE\left(|S_{n,j}|^p\right) \\
+ \PE\left[\left(\sum_{j=n+1}^N c_j|R_j|\right)^p\right].\end{multline}
\end{lemma}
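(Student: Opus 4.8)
The plan is to reduce this martingale-array statement to the classical $L^p$ Chow--Birnbaum--Marshall (Hájek--Rényi) inequality applied to the single ``fixed-row'' martingale $\{S_{n,k},\ k\ge n\}$, by means of an algebraic identity expressing the diagonal sums $S_{m,m}$ in terms of $S_{n,m}$ and the correction terms $R_j$. First I would record the telescoping identity
\[
S_{m,m}=S_{n,m}+\sum_{j=n+1}^{m}R_j \eqsp, \qquad m\ge n \eqsp.
\]
Indeed, $S_{m,m}=D_{m,m}+\sum_{i=1}^{m-1}D_{m,i}=D_{m,m}+R_m+\sum_{i=1}^{m-1}D_{m-1,i}=D_{m,m}+R_m+S_{m-1,m-1}$ by the definition of $R_m$; iterating down to index $n$ and comparing with $S_{n,m}=S_{n,n}+\sum_{j=n+1}^{m}D_{j,j}$ gives the claim. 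Since $\{c_n\}$ is non-increasing, $c_m\le c_j$ whenever $n+1\le j\le m$, hence $c_m|S_{m,m}|\le c_m|S_{n,m}|+\sum_{j=n+1}^{N}c_j|R_j|$ for every $m\in\{n,\dots,N\}$, so that
\[
\max_{n\le m\le N}c_m|S_{m,m}|\ \le\ \max_{n\le m\le N}c_m|S_{n,m}|\ +\ \sum_{j=n+1}^{N}c_j|R_j| \eqsp.
\]
A union bound at level $\lambda$ then splits $\PP(\max_{m}c_m|S_{m,m}|>\lambda)$ into $\PP(\max_{m}c_m|S_{n,m}|>\lambda/2)$ and $\PP(\sum_{j=n+1}^N c_j|R_j|>\lambda/2)$, the second of which is controlled by Markov's inequality, contributing exactly the term $2^p\lambda^{-p}\PE[(\sum_{j=n+1}^N c_j|R_j|)^p]$.

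For the first piece I would use (reproving it if one does not wish to cite it) the $L^p$ Hájek--Rényi--Chow inequality for the martingale $\{S_{n,k},\F_k\}_{k\ge n}$; this is genuinely a martingale for the common filtration precisely because $\F_{n,i}=\F_i$, so that each $D_{n,i}$ and each diagonal increment $D_{j,j}$ is an $\F_\cdot$-martingale difference. The standard argument runs as follows (assuming w.l.o.g. the moments on the right are finite). The process $\{|S_{n,k}|^p\}_{k\ge n}$ is a non-negative submartingale since $p\ge1$; set $\tau\eqdef\inf\{m:\ n\le m\le N,\ c_m|S_{n,m}|>\lambda/2\}$ with $\tau=\infty$ otherwise, so $\{\tau=m\}\in\F_m$ and $(\lambda/2)^p\PP(\tau=m)\le c_m^p\,\PE[|S_{n,m}|^p\un_{\{\tau=m\}}]$. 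Summing over $m=n,\dots,N$, writing $c_m^p=c_N^p+\sum_{j=m}^{N-1}(c_j^p-c_{j+1}^p)$, interchanging the two finite sums, and using the submartingale bound $\PE[|S_{n,m}|^p\un_{\{\tau=m\}}]\le\PE[|S_{n,j}|^p\un_{\{\tau=m\}}]$ for $m\le j$ together with $c_j^p-c_{j+1}^p\ge0$, one obtains
\[
(\lambda/2)^p\,\PP\!\Big(\max_{n\le m\le N}c_m|S_{n,m}|>\lambda/2\Big)\ \le\ c_N^p\,\PE[|S_{n,N}|^p]+\sum_{j=n}^{N-1}(c_j^p-c_{j+1}^p)\,\PE[|S_{n,j}|^p] \eqsp.
\]
Combining this with the Markov bound for the $R$-term and multiplying through by $2^{-p}\lambda^p$ yields precisely the asserted inequality (with $\max_{n\le m\le N}c_m|S_{m,m}|$ in the role of $\max c_m|M_{m,m}|$).

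The telescoping identity and the Markov step are routine; the only point needing care is the double-sum interchange in the Hájek--Rényi--Chow estimate, together with the verification that $\tau$ is a stopping time for $\{\F_k\}$ and that $\{S_{n,k}\}_{k\ge n}$ is a martingale for $\{\F_k\}$ — both resting on the hypothesis $\F_{n,i}=\F_i$. I do not expect any substantive obstacle beyond this bookkeeping.
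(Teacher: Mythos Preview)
Your argument is correct. The telescoping identity $S_{m,m}=S_{n,m}+\sum_{j=n+1}^{m}R_j$ is exactly the right device to separate the ``array'' effect (encoded in the $R_j$) from a single fixed-row martingale $\{S_{n,k},\F_k\}_{k\ge n}$, and your stopping-time derivation of the $L^p$ H\'ajek--R\'enyi--Chow bound for that martingale is the standard one; the common-filtration hypothesis $\F_{n,i}=\F_i$ is used precisely where you indicate.

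As for comparison: the paper does not supply its own proof of this lemma --- it merely quotes the statement and refers to \cite{Atchade:2009}. Your argument is in fact the natural proof one would expect in that reference: decompose $S_{m,m}$ into the fixed-row martingale $S_{n,m}$ plus the row-change corrections $R_j$, then apply Chow's maximal inequality to the former and Markov to the latter. So there is no divergence of approach to comment on; you have essentially reconstructed the intended proof.
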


\section{Proofs of Section~\ref{sec:ex2}}

In the proofs, $C$ will denote a generic finite constant whose actual value
might change from one appearance to the next. The proofs below differ from
earlier works (see e.g.  \cite{gersendeetmoulines00,doucetal04}) since $q$ is
not assumed to be compactly supported.

\subsection{Proof of Lemma~\ref{driftRWM}}
\begin{lemma}
\label{lem:tool1:proof:driftRWM}
Assume D\ref{D1}-\ref{D2}. For all $x$ large enough and $|z| \leq \eta
|x|^\upsilon$, $t \mapsto V_s(x+tz)$ is twice continuously differentiable on
$[0,1]$.  There exist a constant $C < +\infty$ and a positive function
$\varepsilon$ such that $\lim_{|x| \to\infty} \varepsilon(x) = 0$, such that
for all $x$ large enough, $|z| \leq \eta |x|^\upsilon$ and $s \leq s_\star$,
\[
\sup_{t \in [0,1]} |\nabla^2 V_s(x+tz)| \leq C \; s V_s(x) |x|^{2(m-1)} \left
  (s + \varepsilon(x) \right) \eqsp.
\]
\end{lemma}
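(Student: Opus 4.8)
The plan is to differentiate $V_s = 1 + \pi^{-s}$ twice, bound each factor in the resulting expression by means of D\ref{D1}--D\ref{D2}, and exploit the hypothesis $\upsilon < 1-m$ both to keep the segment $\{x+tz:\ t\in[0,1]\}$ deep in the tails and to keep the density ratio along it bounded. Throughout, $C$ denotes a constant whose value may change from line to line.

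\textbf{Regularity and localisation.} For $x$ large enough and $|z|\le\eta|x|^\upsilon$, since $\upsilon<1$ we have $\eta|x|^\upsilon\le|x|/2$, hence $|x|/2\le|x+tz|\le 3|x|/2$ for every $t\in[0,1]$; in particular $|x+tz|\ge R$, so by D\ref{D1} the map $y\mapsto\pi^{-s}(y)=\exp(-s\log\pi(y))$ is twice continuously differentiable on the segment, whence $t\mapsto V_s(x+tz)$ is $C^2$ on $[0,1]$. On this same segment, D\ref{D2}(\ref{D2b})--(\ref{D2c}) combined with $m-1<0$ and $m-2<0$ give $|\nabla\log\pi(x+tz)|\le C|x|^{m-1}$ and $|\nabla^2\log\pi(x+tz)|\le C|x|^{m-2}$.

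\textbf{Hessian and density ratio.} A direct computation gives, for $y$ in the tails,
\[
\nabla^2 V_s(y) \;=\; s\,\pi^{-s}(y)\bigl\{-\nabla^2\log\pi(y) + s\,\nabla\log\pi(y)\,\nabla\log\pi(y)^{T}\bigr\},
\]
so, inserting the bounds above and writing $|x|^{m-2}=|x|^{2(m-1)}|x|^{-m}$,
\[
\sup_{t\in[0,1]}|\nabla^2 V_s(x+tz)| \;\le\; C\,s\,|x|^{2(m-1)}\bigl(s + C|x|^{-m}\bigr)\,\sup_{t\in[0,1]}\pi^{-s}(x+tz)\eqsp.
\]
To replace $\pi^{-s}(x+tz)$ by $V_s(x)$, the mean value theorem and D\ref{D2}(\ref{D2b}) give $|\log\pi(x+tz)-\log\pi(x)|\le|z|\sup_{u\in[0,1]}|\nabla\log\pi(x+uz)|\le C|x|^{\upsilon+m-1}$, with a negative exponent because $\upsilon<1-m$; hence for $x$ large enough and all $s\le s_\star$, $\pi^{-s}(x+tz)=\pi^{-s}(x)\exp\!\bigl(s(\log\pi(x)-\log\pi(x+tz))\bigr)\le 2\,\pi^{-s}(x)\le 2\,V_s(x)$, uniformly in $t\in[0,1]$ and $|z|\le\eta|x|^\upsilon$. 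Combining the two displays and setting $\varepsilon(x)\eqdef C|x|^{-m}$, which is positive and tends to $0$ as $|x|\to\infty$, gives the claimed bound after absorbing absolute constants into $C$.

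The main obstacle is this last step: one must ensure that $\bigl(\pi(x)/\pi(x+tz)\bigr)^s$ stays bounded uniformly along the whole segment and over all $s\le s_\star$. This is precisely where the assumption $\upsilon<1-m$ (rather than merely $\upsilon<1$, which only yields the $C^2$ regularity) is used, and it is also what forces the restriction $|z|\le\eta|x|^\upsilon$ instead of a larger neighbourhood of $x$.
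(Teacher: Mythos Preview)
Your proof is correct and follows essentially the same route as the paper: compute the Hessian of $V_s = 1+\pi^{-s}$, bound $|\nabla\log\pi|$ and $|\nabla^2\log\pi|$ via D\ref{D2} along the segment, and control the density ratio $(\pi(x)/\pi(x+tz))^s$ by the mean-value theorem together with $\upsilon+m-1<0$. The only difference is that you use the sharper localisation $|x+tz|\ge|x|/2$ (valid since $\upsilon<1$) rather than the paper's $|x+tz|\ge(1-\eta)|x|^\upsilon$, which yields $\varepsilon(x)=C|x|^{-m}$ instead of the paper's $C|x|^{-m\upsilon}$; both tend to zero, so this is immaterial.
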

\begin{proof}
  $|x +z | \geq |x| -\eta |x|^\upsilon \geq (1-\eta) |x|^\upsilon$ so that $t
  \mapsto V_s(x+tz)$ is twice continuously differentiable on $[0,1]$ for $|x|$
  large enough. We have
  \begin{multline*}
    |\nabla^2 V_s(x+tz)| \leq s V_s(x) \ \ \frac{V_s(x+tz)}{V_s(x)} | \nabla
    \ln \pi(x+tz) \nabla \ln \pi(x+tz)^T | \cdots \\
    \left( s + \frac{|\nabla^2 \ln \pi(x+tz) |}{| \nabla \ln \pi(x+tz) \nabla
        \ln \pi(x+tz)^T |}\right)
  \end{multline*}
  Under the stated assumptions, there exists a constant $C$ such that for any
  $x$ large enough and $|z| \leq \eta |x|^\upsilon$
\[
\sup_{t \in [0,1] } \left( s + \frac{|\nabla^2 \ln \pi(x+tz) |}{| \nabla \ln
    \pi(x+tz) \nabla \ln \pi(x+tz)^T |}\right) \leq s +
\frac{D_2}{d_1^2(1-\eta)} |x|^{-m \upsilon} \eqsp,
\]
and
\[
\sup_{t \in [0,1] } | \nabla \ln \pi(x+tz) \nabla \ln \pi(x+tz)^T | \leq
|x|^{2(m-1)} D_1^2 \left( 1 -\eta |x|^{\upsilon-1} \right)^{2(m-1)} \eqsp..
\]
Finally,
\[
\sup_{t \in [0,1] ,s \leq s_\star}\left(\frac{\pi(x+tz)}{\pi(x)} \right)^{-s}
\leq 1 + s_\star D_1 \ |z| \sup_{t \in [0,1] } |x+tz|^{m-1} \sup_{t \in [0,1],s
  \leq s_\star }\left(\frac{\pi(x+tz)}{\pi(x)} \right)^{-s}
\]
which yields the desired result upon noting that $|z| |x+tz|^{m-1} \leq \eta
|x|^{\upsilon+m-1} (1-\eta |x|^{\upsilon -1})$ is arbitrarily small for $x$
large enough.
\end{proof}

We now turn to the proof of Lemma~\ref{driftRWM}. For $x\in\Xset$, define
$R(x):=\{y\in\Xset:\; \pi(y)<\pi(x)\}$ and $R(x)-x \eqdef \{y-x:\; y\in
R(x)\}$. We have:
\begin{eqnarray*}
 P_\theta V_s(x)-V_s(x)&=&\int\left(V_s(x+z)-V_s(x)\right)q_\theta(z) \ \mu_{Leb}(dz) \\
&&+ \int_{R(x)-x}\left(V(x+z)-V(x)\right)\left(\frac{\pi(x+z)}{\pi(x)}-1\right) q_\theta(z) \ \mu_{Leb}(dz) \eqsp.
\end{eqnarray*}

If $x$ remains in a compact set $\Cset$, using D\ref{D2}(\ref{D2a}) and the
continuity of $x \mapsto V_s(x)$, we have $V_s(x+z)\leq C(1+ \exp(s D_0
|z|^m))$.  It follows that
\[
\sup_{\theta \in \Theta} \sup_{x \in \Cset} \{ P_\theta V_s(x) - V_s(x) \} \leq
C \ \sup_{\theta \in \Theta} \int_{R(x)-x} (1+ \exp(s D_0 |z|^m)) \ q_\theta(z) \
\mu_{Leb}(dz) < +\infty \eqsp.
\]

More generally, let $x$ large enough. Define $l(x) \eqdef \log\pi(x)$,
$R_V(x,z)\eqdef V_s(x+z)-V_s(x)+ s V_s(x) \pscal{z}{\nabla l(x)}$,
$R_\pi(x,z)\eqdef \pi(x+z)(\pi(x))^{-1}-1-\pscal{z}{\nabla l(x)}$. Using the
fact that the mean of $q_\theta$ is zero, we can write: $P_\theta V_s(x) -
V_s(x)=I_1(x,\theta,s)+I_2(x,\theta,s)+I_3(x,\theta,s) $ where
\[I_1(x,\theta,s)\eqdef - s V_s(x) \int_{R(x)-x} \pscal{z}{\nabla l(x)}^2 \ q_\theta(z) \ \mu_{Leb}(dz)  \eqsp,\]
\[I_2(x,\theta,s)\eqdef \int R_V(x,z) \; q_\theta(z) \ \mu_{Leb}(dz)+\int_{R(x)-x}R_V(x,z)\left(\frac{\pi(x+z)}{\pi(x)}-1\right) \ q_\theta(z) \ \mu_{Leb}(dz) \eqsp,\]
and
\[I_3(x,\theta,s) \eqdef - s V_s(x) \ \int_{R(x)-x} R_\pi(x,z)\pscal{z}{\nabla l(x)} \ q_\theta(z) \ \mu_{Leb}(dz) \eqsp.\]
\subsubsection{First  term}
It follows from \cite[Lemma B.3. and proof of Proposition
3]{gersendeetmoulines00} that, under D\ref{D2}(\ref{D2z}), there exists $b>0$,
such that for all $\theta \in \Theta$,
\[
\int_{R(x)-x} \pscal{z}{\nabla l(x)}^2 \ q_\theta(z) \ \mu_{Leb}(dz) \geq b \;
|\nabla l(x) |^2 \eqsp.
\]
Hence, $\sup_{\theta \in \Theta} I_1(x,\theta,s) \leq -s \; V_s(x) \ b \; d_1^2
|x|^{2(m-1)}$.

\subsubsection{Second term}
For $z\in R(x)-x$, $\pi(x+z)<\pi(x)$. Therefore $|I_2(x,\theta,s)|\leq 2\int
|R_V(x,z)|q_\theta(z) \ \mu_{Leb}(dz)$.  By
Lemma~\ref{lem:tool1:proof:driftRWM}, there exists $C< + \infty$ - independent
of $s$ for $s \leq s_\star$- such that for any $|z| \leq \eta |x|^\upsilon$,
\[
|R_V(x,z) |\leq C \; s \; V_s(x) \ |x|^{2(m-1)} \ |z|^2 \ \left(s +
  \varepsilon(x) \right)\eqsp.
\]
This implies that there exists a constant $C< +\infty$ - independent of $s$ for
$s \leq s_\star$ - such that
\begin{multline*}
  \int |R_V(x,z)|q_\theta(z) \ \mu_{Leb}(dz) \leq C \; s \; V_s(x) \
  |x|^{2(m-1)} \ \left(s +
    \varepsilon(x) \right) \ \int |z|^2 q_\theta(z) \mu_{Leb}(dz) \\
  + V_s(x) \; \int_{\{z, |z| \geq \eta |x|^\upsilon \}}
  \frac{V_s(x+z)}{V_s(x)} \  q_\theta(z) \mu_{Leb}(dz) \\
  + C \; V_s(x) \; |x|^{m-1} \int_{\{z, |z| \geq \eta |x|^\upsilon \}} |z| \;
  q_\theta(z) \mu_{Leb}(dz) \eqsp.
\end{multline*}
There exists a constant $C$ such that for $\theta \in \Theta$ and $s \leq
s_\star$, the first term in the rhs is upper bounded by $C \; s \; V_s(x) \
|x|^{2(m-1)} \left(s + \varepsilon(x) \right)$. Under D\ref{D3}, the second
term is upper bounded by $V_s(x) \; |x|^{2(m-1)} \; \varepsilon(x) $ with
$\lim_{|x| \to +\infty} \varepsilon(x) = 0$ uniformly in $\theta$ for $\theta
\in \Theta$, and in $s$ for $s \leq s_\star$.  Since $q_\theta$ is a
multivariate Gaussian distribution, there exists $\lambda_\star>0$ such that
$\sup_{\theta \in\Theta} \int \exp(\lambda_\star |z|^2) q_\theta(z)
\mu_{Leb}(dz)< +\infty$. Under D\ref{D3}, the third term is upper bounded by $C
\; V_s(x) \; |x|^{2(m-1)} \; \exp(-\lambda \eta^2 |x|^{2 \upsilon})$ for some
$\lambda \in (0,\lambda_\star)$, uniformly in $\theta$ for $\theta \in \Theta$,
and in $s$ for $s \leq s_\star$. Hence, we proved that there exists $C_\star<
\infty$ such that for any $s \leq s_\star$,
\[
\sup_{\theta \in \Theta} |I_2(x,\theta,s)|\leq C_\star \; V_s(x) \;
|x|^{2(m-1)} \; \left(s^2+ \varepsilon(x) \right) \eqsp,
\]
for a positive function $\varepsilon$ independent of $s$ and such that
$\lim_{|x| \to +\infty} \varepsilon(x) = 0$.

\subsubsection{Third term} Following the same lines as in the control of $I_2(x,\theta,s)$,  it may be proved that
\begin{multline*}
  I_3(x,\theta,s) \leq s V_s(x) D_1 |x|^{m-1} \int_{\{z, |z| \geq \eta
    |x|^\upsilon \}} |z | \left( 1 + D_1 |z| |x|^{m-1} \right) q_\theta(z) \mu_{Leb}(dz) \\
  + C \ V_s(x) |x|^{3(m-1)} \; \int_{\{z, |z| \leq \eta |x|^\upsilon \}} |z|^3
  \ q_\theta(z) \mu_{Leb}(dz) \leq C \ V_s(x) |x|^{2(m-1)} \varepsilon(x)
\end{multline*}
for a positive function $\varepsilon$ independent of $s,\theta$ and such that
$\lim_{|x| \to +\infty} \varepsilon(x) = 0$.

\subsubsection{Conclusion}
Let $\alpha \in (0,1)$.  By combining the above calculations, we prove that by
choosing $s$ small enough such that $c_\star \eqdef b d_1^2 - C_\star s>0$, we
have
\begin{align}
  \sup_{\theta \in \Theta} P_\theta V_s(x) & \leq V_s(x)  - c_\star V_s(x)
  |x|^{2(m-1)} + b_\star \un_\Cset(x) \label{eq:drift:sous-geom}  \\
  & \leq  V_s(x) - 0.5 c_\star V_s^{1-\alpha}(x) + b_\star \un_\Cset(x)
\end{align}
for a compact set $\Cset$. This proves A\ref{Adrift}(ii) and A\ref{A5}.
A\ref{A6} follows from the results of Appendix~\ref{app:UniformControl}.
A\ref{Adrift}(iii) and A\ref{A2} follow from Lemma~\ref{lem:example:smallset}.

\subsection{Proof of Lemma~\ref{ex:lem:HypB}}
An easy modification in the proof of \cite[Proposition 11]{andrieuetal06} (to
adjust for the difference in the drift function) shows that
$D(\theta,\theta')\leq 2\int_\Xset |q_{e^c\Sigma}(x)-q_{e^{c'}\Sigma'}(x)|
\mu_{Leb}(dx)$. We then apply \cite[Lemma 12]{andrieuetal06} to obtain that
$D(\theta,\theta')\leq C \, |e^c \Sigma-e^{c'}\Sigma'|_\s$ where $C$ is a
finite constant depending upon the compact $\Theta$.  Hereafter, $C$ is finite
and its value may change upon each appearance.  For any $l,n\geq 0$,
$\epsilon>0$, $x \in \rset^p$ and $\theta\in\Theta$, we have
\begin{eqnarray*}
\PP^{(l)}_{x,\theta} \left(D(\theta_n,\theta_{n+1})\geq \epsilon\right) &\leq& \epsilon^{-1}\PE^{(l)}_{x,\theta}\left[D(\theta_n, \theta_{n+1})\right]\\
&\leq&   C \, \PE^{(l)}_{x,\theta}\left[ |c_{n+1}-c_n| + |\Sigma_{n+1} - \Sigma_n|_\s \right]\\
&\leq& C  \, (l+n+1)^{-1}\left(1+\PE^{(l)}_{x,\theta}\left[|X_{n+1}|^2\right]+\sqrt{\PE^{(l)}_{x,\theta}\left[|X_{n+1}|^2\right]}\right) \eqsp.
\end{eqnarray*}
D\ref{D2}(\ref{D2a}) implies that we can find $C<\infty$ such that $|x|^2\leq C
\; \phi(V_s(x))$ for all $x\in\Xset$ where $\phi(t) = [\ln t]^{2/m}$. From the
drift condition (Lemma~\ref{driftRWM}),
Proposition~\ref{prop:ComparaisonGal}(\ref{prop:CpG1}) and the concavity of
$\phi$, we deduce that there exists $C$ such that
$\PE^{(l)}_{x,\theta}\left[|X_n|^2\right]\leq C\; [\ln V_s(x) ]^{2/m} \; [\ln
n]^{2/m}$.  We conclude that for any probability $\xi_1$ such that $\xi_1([\ln
V_s]^{2/m}) < +\infty$, $\lim_n \PP_{\xi_1,\xi_2}
\left(D(\theta_n,\theta_{n+1}) \geq \epsilon\right)=0$ and for any level set
$\mathcal{D}$ of $V_s$,
\[
\lim_{n\to\infty}\sup_{l\geq 0}\sup_{\mathcal{D}\times
  \Theta}\PP^{(l)}_{x,\theta} \left(D(\theta_n,\theta_{n+1}) \geq
  \epsilon\right)=0 \eqsp.\]

\vspace{2cm}

{\bf Acknowledgment:} We would like to thank Michael Woodroofe for helpful
discussions on the resolvent approach to limit theorems and Prof. Pierre
Priouret and Christophe Andrieu for helpful discussions. We also thank M.
Vihola for helpful comments. 

%%\bibliographystyle{ims} 
%%\bibliography{af}

\end{document}